\newenvironment{enumeratei}{\begin{enumerate}[\upshape (i)]}{\end{enumerate}}
\newcommand\blfootnote[1]{%
  \begingroup
  \renewcommand\thefootnote{}\footnote{#1}%
  \addtocounter{footnote}{-1}%
  \endgroup
}
\newsavebox\myboxA
\newsavebox\myboxB
\newlength\mylenA
\newcommand*\xoverline[2][0.75]{%
    \sbox{\myboxA}{$\m@th#2$}%
    \setbox\myboxB\null
    \ht\myboxB=\ht\myboxA%
    \dp\myboxB=\dp\myboxA%
    \wd\myboxB=#1\wd\myboxA
    \sbox\myboxB{$\m@th\overline{\copy\myboxB}$}
    \setlength\mylenA{\the\wd\myboxA}
    \addtolength\mylenA{-\the\wd\myboxB}%
    \ifdim\wd\myboxB<\wd\myboxA%
       \rlap{\hskip 0.5\mylenA\usebox\myboxB}{\usebox\myboxA}%
    \else
        \hskip -0.5\mylenA\rlap{\usebox\myboxA}{\hskip 0.5\mylenA\usebox\myboxB}%
    \fi}
\numberwithin{equation}{section}
\def\sss{\scriptscriptstyle}
\newcommand{\ubar}[1]{\underaccent{\bar}{#1}}
\newcommand{\prob}[1]{\ensuremath{\mathbbm{P}\left(#1\right)}}
\newcommand{\expt}[1]{\ensuremath{\mathbbm{E}\left[#1\right]}}
\newcommand{\var}[1]{\ensuremath{\mathrm{Var}\left(#1\right)}}
\newcommand{\floor}[1]{\ensuremath{\left\lfloor #1 \right\rfloor}}
\newcommand{\ind}[1]{\ensuremath{\mathbbm{1}\left\{#1\right\}}}
\newcommand{\dto}{\ensuremath{\xrightarrow{d}}}
\newcommand{\diam}{\ensuremath{\mathrm{diam}}}
\newcommand{\PR}{\ensuremath{\mathbbm{P}}}
\newcommand{\E}{\ensuremath{\mathbbm{E}}}
\newcommand{\R}{\ensuremath{\mathbb{R}}}
\newcommand{\N}{\ensuremath{\mathbb{N}}}
\newcommand{\1}{\ensuremath{\mathbbm{1}}}
\newcommand{\dst}{\ensuremath{\mathrm{d}}}
\newcommand{\shortarrow}{{\sss \downarrow}}
\newcommand{\bld}[1]{\boldsymbol{#1}}
\newcommand{\oP}{o_{\sss \PR}}
\newcommand{\OP}{O_{\sss \PR}}
\newcommand{\thetaP}{\ensuremath{\Theta_{\sss\PR}}}
\newcommand{\e}{\mathrm{e}}
\newcommand{\dif}{\mathrm{d}}
\newcommand{\csi}{\mathscr{C}_{\sss (i)}}
\newcommand{\cs}{\mathscr{C}}
\newcommand{\scl}{\mathrm{scl}}
\newcommand{\visit}{\mathrm{visit}}
\newcommand{\blue}[1]{{\color{black}#1}}
\newcommand{\red}[1]{{\color{black}#1}}
\newcommand{\rCM}{\mathrm{CM}}
\newcommand{\CM}{\mathrm{CM}_n(\bld{d})}
\newcommand{\UM}{\mathrm{UM}_n(\bld{d})}
\newtheorem*{ass*}{Assumption}
\newtheorem*{theorem*}{Theorem}
\newtheorem{theorem}{Theorem}[section]
\newtheorem{algo}{Algorithm}
 \newtheorem{lemma}[theorem]{Lemma}
\newtheorem{proposition}[theorem]{Proposition}
\newtheorem{corollary}[theorem]{Corollary}
\newtheorem{assumption}{Assumption}
\newtheorem{remark}{Remark}
\newtheorem{fact}{Fact}
\newtheorem{defn}{Definition}
\theoremstyle{definition}
\numberwithin{equation}{section}
\newcommand{\cA}{\mathcal{A}}\newcommand{\cB}{\mathcal{B}}
\newcommand{\cF}{\mathcal{F}}
\newcommand{\cG}{\mathcal{G}}
\newcommand{\cK}{\mathcal{K}}
\newcommand{\cN}{\mathcal{N}}
\newcommand{\cX}{\mathcal{X}}
\newcommand{\fm}{\mathfrak{m}}
\newcommand{\bZ}{\mathbb{Z}}
\newcommand{\rE}{\mathrm{E}}
\newcommand{\sC}{\mathscr{C}}
\DeclareMathOperator{\pr}{\mathbbm{P}}
\newcommand{\weakc}{\stackrel{d}{\longrightarrow}}
\begin{document}
\title{Global lower mass-bound for critical configuration models in the heavy-tailed regime}
\author{Shankar Bhamidi$^1$, Souvik Dhara$^{2,3}$, Remco van der Hofstad$^{4}$,  Sanchayan Sen$^5$}
\maketitle
\blfootnote{\emph{Emails:} 
 \href{mailto:bhamidi@email.unc.edu}{bhamidi@email.unc.edu},
 \href{mailto:sdhara@mit.edu}{sdhara@mit.edu},
 \href{mailto:r.w.v.d.hofstad@tue.nl}{r.w.v.d.hofstad@tue.nl},
 \href{mailto:sanchayan.sen1@gmail.com}{sanchayan.sen1@gmail.com}} 
\blfootnote{$^1$Department of Statistics and Operations Research,  University of North Carolina}
\blfootnote{$^2$ Department of Mathematics, Massachusetts Institute of Technology}
\blfootnote{$^3$ Microsoft Research Lab -- New England}
\blfootnote{$^4$Department of Mathematics and Computer Science, Eindhoven University of Technology}
\blfootnote{$^5$Department of Mathematics, Indian Institute of Science}
\blfootnote{2010 \emph{Mathematics Subject Classification.} Primary: 60C05, 05C80.}
\blfootnote{\emph{Keywords and phrases}. Global lower mass-bound, critical configuration model, heavy-tailed degrees}
\blfootnote{\emph{Acknowledgment}. 
The authors are grateful to two anonymous referees for their careful reading and many comments and suggestions on an earlier version of the paper.
SB was partially supported by NSF grants DMS-1613072, DMS-1606839, and ARO grant W911NF-17-1-0010. 
SD and  RvdH were supported by the Netherlands Organisation for Scientific Research (NWO) through Gravitation Networks grant 024.002.003. 
In addition, RvdH was supported by VICI grant 639.033.806. 
SS has been supported in part by the Infosys foundation, Bangalore, and by MATRICS grant MTR/2019/000745 from SERB.
SD thanks Eindhoven University of Technology, where a major part of this work was done. 
}
\maketitle
\begin{abstract}
We establish the global lower mass-bound property for the largest connected components in the critical window for the configuration model when the degree distribution has an infinite third moment.
The scaling limit of the critical percolation clusters, viewed as measured metric spaces, was established in \cite{BDHS17} with respect to the Gromov-weak topology.
Our result extends those scaling limit results to  the stronger Gromov-Hausdorff-Prokhorov topology under slightly stronger assumptions on the degree distribution.
This implies the distributional convergence of global functionals such as the diameter of the largest critical components.
Further, our result gives a sufficient condition for compactness of the random metric spaces that arise as scaling limits of critical clusters in the heavy-tailed regime. 
\end{abstract}

\section{Introduction}
Any finite, connected graph $\sC$ can be viewed as a metric space with the distance between points given by $a \dst (\cdot,\cdot)$ for some constant $a>0$, where $\dst(\cdot,\cdot)$ is used as a generic notation to denote the  graph-distance (i.e., number of edges in the shortest path between vertices). 
There is a natural probability measure $\mu$ associated to the metric space $(\sC,a \dst)$ given by  $\mu (A) = |A|/|\sC| $ for any $A\subset \sC$, where $|A|$ denotes the number of vertices in $A$. 
We denote this metric measure space by $(\sC,\red{a})$.
Fix any $\delta>0$ and define the $\delta$-lower mass of $(\sC,\red{a})$ by 
\begin{eq}\label{eq:defn:GLM}
	\fm(\delta):=  \frac{\inf_{u\in \sC}\big|\{v\in \sC: a \dst(v,u) \leq \delta\}\big|}{|\sC|}.
\end{eq}
Thus, $\fm (\delta)$ is the least \emph{mass} in any $\delta$-neighborhood  of a vertex in  $(\sC,\red{a})$.
For a sequence $(\sC_n,a_n)_{n\geq 1}$ of graphs viewed as metric measure spaces, the global lower mass-bound property is defined as follows:
\begin{defn}[Global lower mass-bound property \cite{ALW16-2}]\normalfont 
	For $\delta>0$, let $\fm_n(\delta)$ denote the $\delta$-lower mass of $(\sC_n,a_n)$.
	Then $(\sC_n,a_n)_{n\geq 1}$ is said to satisfy the global lower mass-bound property if and only if $\sup_{n\geq 1} \fm_n(\delta)^{-1}<\infty$ for any $\delta >0$.
	When $(\sC_n)_{n\geq 1}$ is a collection of random graphs, $(\sC_n,a_n)_{n\geq 1}$ is said to satisfy the global lower mass-bound property if and only if $(\fm_n(\delta)^{-1})_{n\geq 1}$ is a tight sequence of random variables for any $\delta >0$.
\end{defn}
The aim of this paper is to prove the global lower mass-bound property for largest connected components of random graphs with given degrees (configuration model) at criticality, when the third moment of the empirical degree distribution tends to infinity (Theorem~\ref{thm:gml-bound}).  
Informally speaking, the global lower mass-bound property ensures that all the small neighborhoods of vertices in the `large' critical component have mass bounded away from zero, so that the component does not have any \emph{light spots} and the total mass is well-distributed over the whole component.
This has several interesting consequences in the theory of critical random graphs. 
Our main motivation comes from the work of Athreya, L\"ohr, and Winter~\cite{ALW16-2}, who have shown that the global lower mass-bound property can be used to prove Gromov-Hausdorff-Prokhorov (GHP) convergence of random metric spaces. 
In a previous paper \cite{BDHS17}, we have  studied the critical percolation clusters for the configuration model in the heavy-tailed universality class. 
We have proved that the ordered vector of components converges in distribution to suitable random objects in the Gromov-weak topology. 
The global lower mass-bound in this paper shows that the result of \cite{BDHS17} in fact holds with respect to the stronger GHP-topology.
One motivating reason for proving the GHP-convergence is that it yields the scaling limit of  global functionals like the diameter of large critical components. 
Finding the scaling limit for the diameter of critical components is a daunting task even for the Erd\H{o}s-R\'enyi random graph.
Nachmias and Peres~\cite{NP08} estimated the tail probabilities of the diameter, but showing a distributional convergence result was a difficult question, until the seminal paper by Addario-Berry,  Broutin and Goldschmidt~\cite{ABG09} that proved the GHP-convergence for critical Erd\H{o}s-R\'enyi random graphs. 
As a corollary of Theorem~\ref{thm:gml-bound}, we also get distributional convergence of the suitably rescaled diameter of the critical percolation clusters in the heavy-tailed regime (Theorem~\ref{thm:GHP}), where the scaling limit and exponents turn out to be different than those for the Erd\H{o}s-R\'enyi case.

We will further discuss the applications and the scope of this work as well as its technical contributions after stating our results in Section~\ref{sec:discussion}.
We start by defining the configuration model and state the precise assumptions.

\subsection{The configuration model}
Consider a non-increasing sequence of degrees $\boldsymbol{d} = ( d_i )_{i \in [n]}$ such that $\ell_n = \sum_{i \in [n]}d_i$ is even. For notational convenience, we suppress the dependence of the degree sequence on $n$. The configuration model on $n$ vertices having degree sequence $\boldsymbol{d}$ is constructed as follows \cite{B80,BC78}:
\begin{itemize}
	\item[] Equip vertex $j$ with $d_{j}$ stubs, or \emph{half-edges}. Two half-edges create an edge once they are paired. Therefore, initially we have $\ell_n=\sum_{i \in [n]}d_i$ half-edges. Pick any one half-edge and pair it with another uniformly chosen half-edge from the remaining unpaired half-edges and keep repeating the above procedure until all the unpaired half-edges are exhausted. 
\end{itemize}
Let $\CM$ denote the graph constructed by the above procedure.
Note that $\CM$ may contain self-loops or multiple edges. 
Given any degree sequence, let $\mathrm{UM}_n(\bld{d})$ denote the graph chosen uniformly at random from the collection of all simple graphs with degree sequence $\boldsymbol{d}$.
It can be shown that the conditional law of  $\mathrm{CM}_{n}(\boldsymbol{d})$, conditioned on it being simple, is the same as $\mathrm{UM}_n(\bld{d})$ (see e.g. \cite[Proposition 7.13]{RGCN1}).

\subsection{Main results}
Fix a constant $\tau\in (3,4)$, which will denote the power-law exponent of the asymptotic degree distribution of $\CM$. 
Throughout this paper we will use the shorthand notation
\begin{equation}\label{eqn:notation-const}
	\alpha= 1/(\tau-1),\quad \rho=(\tau-2)/(\tau-1),\quad \eta=(\tau-3)/(\tau-1).
\end{equation}
We use the standard notation of $\xrightarrow{\sss\PR}$ and $\xrightarrow{\sss d}$ to denote convergence in probability and in distribution, respectively. 
Also, we use a generic notation $C$ to denote a positive universal constant whose exact value may change from line to line.
We use Bachmann–Landau asymptotic notation $o(\cdot)$, $O(\cdot)$, $\Theta (\cdot)$, $\omega (\cdot)$, $\Omega(\cdot)$. 
A sequence of events $(\mathcal{E}_n)_{n\geq 1}$ is said to occur with high probability~(whp) with respect to the probability measures $(\mathbbm{P}_n)_{n\geq 1}$  when $\mathbbm{P}_n\big( \mathcal{E}_n \big) \to 1$. 
For (random) variables $X_n$ and $Y_n$, define $X_n = O_{\sss\mathbbm{P}}(Y_n)$ when  $ ( |X_n|/|Y_n| )_{n \geq 1} $ is a tight sequence; $X_n =o_{\sss\mathbbm{P}}(Y_n)$ when $X_n/Y_n  \xrightarrow{\sss\PR} 0 $; $X_n =\thetaP(Y_n)$ if both $X_n=\OP(Y_n) $ and $Y_n=\OP(X_n)$.

We first state the general assumptions that are used to prove scaling limits for critical configuration models with heavy-tailed degree distributions as identified previously in~\cite{DHLS16,BDHS17}:
\begin{assumption}[General assumptions]\label{assumption1}
	\normalfont 
	For each $n\geq 1$, let $\bld{d}=\boldsymbol{d}_n=(d_1,\dots,d_n)$ be a degree sequence satisfying $d_1\geq d_2\geq\ldots\geq d_n$. 
	We assume the following about $(\boldsymbol{d}_n)_{n\geq 1}$ as $n\to\infty$:
	\begin{enumerate}[(i)] 
		\item \label{assumption1-1} (\emph{High-degree vertices}) For each fixed $i\geq 1$, 
		\begin{equation}\label{defn::degree}
			n^{-\alpha}d_i\to \theta_i,
		\end{equation}
		where $\boldsymbol{\theta}=(\theta_1,\theta_2,\dots) \in \ell^3_{\shortarrow} \setminus \ell^2_{\shortarrow}$, where $\red{\ell^p_{\shortarrow}}:=\{(x_i)_{i\geq 1}: x_1\geq x_2\geq \dots \text{ and }\sum_{i}x_i^p<\infty\}$.  
		\item \label{assumption1-2} (\emph{Moment assumptions}) 
		Let $D_n$ denote the degree of a typical vertex, i.e., a vertex chosen uniformly at random from the vertex set $[n]$, independently of $\mathrm{CM}_n(\boldsymbol{d})$. Then, $D_n$ converges in distribution to some discrete random variable $D$ and 
		\begin{gather}
			\E[D_n] = \frac{1}{n}\sum_{i\in [n]}d_i\to \mu := \E[D],  \qquad \E[D_n^2] = \frac{1}{n}\sum_{i\in [n]}d_i^2 \to \mu_2:=\E[D^2],\label{eqn:669}\\ 
			\lim_{K\to\infty}\limsup_{n\to\infty}n^{-3\alpha} \sum_{i=K+1}^{n} d_i^3=0.\label{eqn:670}
		\end{gather}
		\item  Let $n_1$ be the number of degree-one vertices. Then $n_1=\Theta(n)$, which is equivalent to assuming that $\prob{D=1}>0$.
	\end{enumerate}
\end{assumption}
\begin{remark} \normalfont \label{rem:assumption-1}
	As important examples, Assumption~\ref{assumption1} was shown to hold when the degree distribution is 
	power-law with exponent $\tau\in (3,4)$ \cite[Section 2]{DHLS16}.
	More precisely, 
	if $F$ is a distribution function on the  nonnegative integers  satisfying $[1-F](x) = (1+o(1))C x^{-(\tau-1)}$ as $x\to\infty$, then Assumptions~\ref{assumption1}(i),~\ref{assumption1}(ii) are satisfied when (a)
	$d_i= [1-F]^{-1}(i/n)$,
	and when (b) $d_i$ are the order statistics of an i.i.d.~sample from $F$ (we add a dummy half-edge to vertex 1 if $\sum_{i\in [n]} d_i$ is odd). Assumptions~\ref{assumption1}(iii) is also satisfied in these examples if $F$ has non-zero mass at 1. 
\end{remark}

We further assume that the configuration model lies within the critical window of the phase transition, i.e., for some $\lambda\in \R$,
\begin{equation}\label{defn:criticality}
	\nu_n=\frac{\sum_{i\in [n]}d_i(d_i-1)}{\sum_{i\in [n]}d_i} =  1 + \lambda n^{-\eta} + o(n^{-\eta}).
\end{equation}   
Denote the $i$-th largest connected component of $\rCM_n(\bld{d})$ by $\csi$, breaking ties arbitrarily.
For each $v\in [n]$ and $\delta>0$, let $\mathcal{N}_v(\delta)$ denote the $\delta n^{\eta}$ neighborhood of $v$ in $\rCM_n(\bld{d})$ in the graph distance. 
For each $i\geq 1$, define 
\begin{equation} \label{eq:m-i-defn}
	\mathfrak{m}_i^n(\delta) = \inf_{v\in\mathscr{C}_{\sss (i)}}n^{-\rho} |\mathcal{N}_v(\delta)|.
\end{equation}
Our goal is to prove the global lower mass-bound property for the critical components~$\mathscr{C}_{\sss (i)}$. 
For $\CM$ satisfying Assumption~\ref{assumption1} and \eqref{defn:criticality}, it was shown in \cite[Theorem 1]{DHLS16} that 
\begin{eq}\label{eq:comp-size-conv}
	(n^{-\rho} |\sC_{\sss (i)}|)_{i\geq 1} \dto (\xi_i)_{i\geq 1},
\end{eq}
with respect to the $\ell^2_{\shortarrow}$-topology, where the  $\xi_i$'s are non-degenerate random variables with support $(0,\infty)$. 
Therefore, it is enough to rescale by $n^{\rho}$ in \eqref{eq:m-i-defn} instead of the component sizes as given in \eqref{eq:defn:GLM}. 
In order to prove tightness of $\mathfrak{m}_i^n(\delta)$, we will need a further technical assumption on the degrees. 

\red{
	\begin{assumption}\label{assumption-extra} \normalfont
		Let $V_n^*$ be a vertex chosen in a size-biased manner with sizes being $(d_i/\ell_n)_{i\in [n]}$, i.e., $\PR(V_n^* = i) = d_i/\ell_n$, and 
		let $D_n^*$ be the degree of $V_n^*$.
		There exist constants $c_0>0$ and $c_1 >1$ such that for all $n\geq 1$,
		\begin{eq}\label{eq:defn-D-n-lb}
			\PR(l <D_n^*\leq c_1l) \geq \frac{c_0}{l^{\tau-2}} 
			\ \ \text{ for }\ \
			1\leq l< d_1\, .
		\end{eq}
	\end{assumption}
}

\begin{remark} \label{rem:assumption-2}\normalfont 
	\red{ 
		Assumption~\ref{assumption-extra} says that the mass distribution in the tail of $D_n^*$ is well-behaved in the sense that we have a uniform (over $n$) lower bound of the form \eqref{eq:defn-D-n-lb}.
		Such lower bounds can be used to obtain tail-bounds on the heights of branching processes; see Proposition~\ref{prop:RW-hitting-estimate} below. 
		(See also \cite[Theorem 1.3]{A17}.)
		It can be easily shown that Assumption~\ref{assumption-extra} holds in the examples discussed in Remark~\ref{rem:assumption-1} by observing that the size-biased distribution is a power-law with exponent $\tau -1$. }
\end{remark}

\noindent  The following theorem is the main result of this paper:
\begin{theorem}[Global lower mass-bound for $\CM$]
	\label{thm:gml-bound} 
	Suppose that {\rm Assumptions~\ref{assumption1},~\ref{assumption-extra}} and the criticality condition \eqref{defn:criticality} hold.
	Then, for each  fixed $i\geq 1$, $(\sC_{\sss (i)},n^{-\eta})_{n\geq 1}$ satisfies the  global lower mass-bound, i.e., for any $\delta>0$, the sequence $(\mathfrak{m}_i^n(\delta)^{-1})_{n \geq 1}$ is tight. 
\end{theorem}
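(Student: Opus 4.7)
The plan is to establish tightness of $\mathfrak{m}_i^n(\delta)^{-1}$ by first reducing to a first-moment estimate and then analyzing a breadth-first exploration from a generic vertex. Fix $\delta,\varepsilon>0$. By the cluster-size convergence \eqref{eq:comp-size-conv} I may choose $\kappa'>0$ with $\PR(|\sC_{\sss (i)}|<\kappa' n^\rho)<\varepsilon/3$ for all large $n$. Since $v\in\sC_{\sss (i)}$ forces $|\sC(v)|\geq \kappa' n^\rho$,
\begin{equation*}
\PR\bigl(\mathfrak{m}_i^n(\delta)<\kappa\bigr) \leq \tfrac{\varepsilon}{3} + \sum_{v\in[n]}\PR\bigl(|\sC(v)|\geq\kappa' n^\rho,\ |\mathcal{N}_v(\delta n^\eta)|<\kappa n^\rho\bigr).
\end{equation*}
Since $\sum_v\PR(|\sC(v)|\geq\kappa' n^\rho)=\E[\#\{v:|\sC(v)|\geq\kappa' n^\rho\}]=O(n^\rho)$ by \eqref{eq:comp-size-conv} and the second-moment control on cluster sizes, it suffices to show, for some sufficiently small $\kappa=\kappa(\delta,\varepsilon)>0$, the uniform-in-$v$ estimate
\begin{equation*}
\PR\bigl(|\mathcal{N}_v(\delta n^\eta)|<\kappa n^\rho \,\big|\, |\sC(v)|\geq\kappa' n^\rho\bigr) = o(n^{-\rho}).
\end{equation*}

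To establish this conditional estimate I would run the breadth-first exploration of $\sC(v)$ from $v$ and set up a dichotomy around the hitting time
\begin{equation*}
T_v:=\inf\{k\geq 0:\mathcal{N}_v(k)\text{ contains a vertex of degree }\geq n^\alpha/M\},
\end{equation*}
for a large parameter $M=M(\kappa,\delta)$. If $T_v>\delta n^\eta/2$ then the exploration up to level $\delta n^\eta/2$ meets only vertices of degree $<n^\alpha/M$; on this truncated degree sequence, the exploration reduces via the standard random-walk coding of the configuration model to a uniformly near-critical process with bounded-variance offspring, for which classical exponential-tail estimates yield $\PR(|\sC(v)|\geq\kappa' n^\rho,\ T_v>\delta n^\eta/2)=o(n^{-\rho})$ once $M$ is taken large. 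On the complementary event, the BFS has already reached some hub $h$ by level $\delta n^\eta/2$, and I would couple the next $\delta n^\eta/2$ levels of the exploration from $h$ with the multi-type near-critical branching process constructed in \cite{BDHS17}. The size-biased tail bound \eqref{eq:defn-D-n-lb} guarantees that each subsequent half-edge pairing lands on a vertex of degree $\geq \ell$ with probability at least $c_0 \ell^{-(\tau-2)}$, so that over $\Theta(n^\eta)$ pairings the expected number of distinct top-$K_n$ hubs absorbed into $\mathcal{N}_v(\delta n^\eta)$ is $\omega(1)$ by Assumption~\ref{assumption-extra}(ii). Since each absorbed top-$K_n$ hub contributes $\Theta(n^\alpha)$ vertices to the ball, the total neighborhood mass then exceeds $\kappa n^\rho$.

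The main obstacle is upgrading this expectation-level estimate in the hub-rich branch to the polynomial-in-$n$ failure probability demanded by the union bound. A plain second-moment argument only yields $O_\PR(1)$ concentration of the neighborhood mass around its expectation, far too weak for a sum over $\Theta(n)$ candidate vertices. The resolution is to exploit the combinatorial abundance of top-$K_n$ hubs: once any one hub has been reached, a bounded number of further BFS levels force the exploration to visit a constant fraction of the remaining top-$K_n$ hubs simultaneously, and the condition $\beta_{K_n}^n=\omega(\log n)$ then delivers the required stretched-exponential concentration via an exponential-moment computation driven by \eqref{eq:extra-assumption-2}. In this computation, Assumption~\ref{assumption-extra}(iii) plays the essential role of controlling the contribution of medium-degree vertices in the range $(K,K_n)$, ensuring that they neither absorb the exploration prematurely nor introduce uncontrolled fluctuations into the exponential-moment bound; choosing $\kappa$ sufficiently small depending on $M$ then closes the estimate.
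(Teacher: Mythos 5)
Your proposal takes a fundamentally different route from the paper, and as written it has several genuine gaps that would need to be filled before it could work.

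The paper's argument is a top-down structural decomposition. It proves two things: (1) each fixed hub $j\in[K]$ has mass $\thetaP(n^\rho)$ in its $\delta n^\eta$ ball (Proposition~\ref{prop:size-nbd bound}), and (2) after removing the top $K$ hubs, all remaining components have diameter $\oP(n^\eta)$ for $K$ large (Proposition~\ref{prop:diamter-small-comp}). On the event that $\diam(\csi)>\varepsilon_1 n^\eta$, fact (2) forces $\csi$ to contain a hub and be entirely covered by the hubs' $\varepsilon_1 n^\eta/2$-balls, so every $\cN_v(\varepsilon_1)$ engulfs at least one hub ball, which is massive by (1). If $\diam(\csi)\leq \varepsilon_1 n^\eta$, then $\cN_v(\varepsilon_1)=\csi$ and the mass is controlled by the cluster-size limit. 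Crucially, this reduces the problem to a \emph{bounded} collection of events (the $K$ hub neighborhoods plus one diameter event) and never requires a per-vertex tail estimate.

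Your approach instead tries a vertex-by-vertex union bound, requiring the uniform estimate $\PR\bigl(|\cN_v(\delta n^\eta)|<\kappa n^\rho \mid |\sC(v)|\geq\kappa' n^\rho\bigr)=o(n^{-\rho})$. This per-vertex polynomial tail is not established by your sketch, and several intermediate steps do not hold as stated. First, your dichotomy on $T_v$ is broken: in the branch $T_v>\delta n^\eta/2$ you conclude $\PR(|\sC(v)|\geq\kappa' n^\rho,\ T_v>\delta n^\eta/2)=o(n^{-\rho})$ by observing that the truncated exploration is subcritical, but this is unjustified --- the BFS can avoid hubs for the first $\delta n^\eta/2$ levels and still discover a hub at a \emph{later} level, after which the cluster grows to size $\Theta(n^\rho)$, so the event is not controlled by a subcritical bound on the truncated process. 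Second, in the hub-rich branch you explicitly acknowledge that a second-moment argument only yields $\OP(1)$ concentration, and the replacement ``a bounded number of further BFS levels force the exploration to visit a constant fraction of the remaining top-$K_n$ hubs simultaneously'' is asserted, not proved; this is exactly the hard part and is not a consequence of \eqref{eq:defn-D-n-lb} or Assumption~\ref{assumption-extra}(ii)--(iii) without substantial work. Third, your bound $\E[\#\{v:|\sC(v)|\geq\kappa' n^\rho\}]=O(n^\rho)$ is asserted from \eqref{eq:comp-size-conv}, but \eqref{eq:comp-size-conv} is a distributional limit in $\ell^2_\shortarrow$ and by itself only gives $\#\{v:|\sC(v)|\geq\kappa' n^\rho\}=\OP(n^\rho)$; upgrading to an expectation bound needs uniform integrability which you do not supply. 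In short, the obstacles you name are real, your proposed resolutions are hand-waves, and the clean way around all of them is precisely the two-proposition hub/diameter decomposition the paper uses, which sidesteps the per-vertex union bound entirely.
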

\noindent 
By \cite[Theorem 1.1]{J09c}, under the condition \eqref{eqn:669} in Assumption~\ref{assumption1},
\begin{equation}
	\liminf_{n\to\infty} \PR(\CM \text{ is simple})>0.
\end{equation} 
This immediately implies the following:
\begin{theorem}[Global lower mass-bound for $\UM$]\label{cor:GLM-uniform}
	Under {\rm Assumption~\ref{assumption1},~\ref{assumption-extra}} and \eqref{defn:criticality}, the largest components of $\UM$ also satisfy the global lower mass-bound property.
\end{theorem}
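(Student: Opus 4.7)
The plan is to transfer the tightness statement in Theorem~\ref{thm:gml-bound} from $\CM$ to $\UM$ by conditioning on simplicity, using the fact that $\CM_n(\bld{d})$ is simple with asymptotically positive probability.

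\textbf{Step 1.} Recall that, conditionally on $\CM_n(\bld{d})$ being simple, it has the law of $\UM_n(\bld{d})$ (as noted in the paragraph following the definition of the configuration model, cf.\ \cite[Proposition 7.13]{RGCN1}). In particular, for any measurable event $A$ on the space of simple graphs with degree sequence $\bld{d}$,
\begin{equation*}
\PR\bigl(\UM_n(\bld{d}) \in A\bigr) \;=\; \PR\bigl(\CM_n(\bld{d}) \in A \,\bigm|\, \CM_n(\bld{d}) \text{ simple}\bigr).
\end{equation*}

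\textbf{Step 2.} By the cited result \cite[Theorem 1.1]{J09c}, the moment assumption \eqref{eqn:669} yields
\begin{equation*}
\liminf_{n\to\infty}\PR\bigl(\CM_n(\bld{d}) \text{ is simple}\bigr) \;=:\; c_\star \;>\; 0.
\end{equation*}
Hence there exists $n_0$ such that $\PR(\CM_n(\bld{d}) \text{ simple}) \geq c_\star/2$ for all $n \geq n_0$.

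\textbf{Step 3.} Fix $i \geq 1$ and $\delta > 0$. The event $\{\mathfrak{m}_i^n(\delta)^{-1} > M\}$ depends only on the graph (through the $i$-th largest component and its metric/measure structure), so it is measurable under both laws. By Theorem~\ref{thm:gml-bound}, the sequence $(\mathfrak{m}_i^n(\delta)^{-1})_{n\geq 1}$ is tight under the law of $\CM_n(\bld{d})$. Thus, given $\varepsilon > 0$, there exists $M = M(\varepsilon,\delta,i)$ such that
\begin{equation*}
\PR\bigl(\mathfrak{m}_i^n(\delta)^{-1} > M \text{ in } \CM_n(\bld{d})\bigr) \;<\; \tfrac{c_\star}{2}\,\varepsilon \quad \text{for all } n.
\end{equation*}

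\textbf{Step 4.} Combining Steps 1--3, for all $n \geq n_0$,
\begin{equation*}
\PR\bigl(\mathfrak{m}_i^n(\delta)^{-1} > M \text{ in } \UM_n(\bld{d})\bigr) \;\leq\; \frac{\PR\bigl(\mathfrak{m}_i^n(\delta)^{-1} > M \text{ in } \CM_n(\bld{d})\bigr)}{\PR(\CM_n(\bld{d}) \text{ simple})} \;\leq\; \varepsilon.
\end{equation*}
Since this holds for every $\varepsilon > 0$, the sequence $(\mathfrak{m}_i^n(\delta)^{-1})_{n\geq 1}$ is tight under $\UM_n(\bld{d})$ as well, which is exactly the global lower mass-bound property. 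As $i$ and $\delta$ were arbitrary, the conclusion follows.

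There is no genuine obstacle here: the argument is the standard ``contiguity by positive-probability conditioning'' trick and the entire substance lives in Theorem~\ref{thm:gml-bound} together with Janson's lower bound on $\PR(\CM_n(\bld{d}) \text{ simple})$.
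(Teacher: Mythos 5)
Your proof is correct and is exactly the argument the paper intends: the paper states the result as an immediate consequence of Janson's bound $\liminf_n \PR(\CM \text{ simple})>0$ together with the identity $\law(\UM)=\law(\CM\mid\text{simple})$, and you have simply written out the routine conditioning computation that makes this precise.
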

Next we state another important corollary, which says that the global lower mass-bound property is also satisfied by critical percolation clusters in $\CM$ and $\UM$. 
To this end, let us assume that 
\begin{equation} \label{eq:defn-super-crit}
	\lim_{n\to\infty}\frac{\sum_{i\in [n]}d_i(d_i-1)}{\sum_{i\in [n]}d_i} = \nu >1.
\end{equation} 
In this regime, $\CM$ is supercritical in the sense that there exists a unique \emph{giant} component whp for $\nu>1$, and when $\nu<1$, all the components have size $\oP(n)$  \cite{JL09,MR95}.
Percolation refers to deleting each edge of a graph independently with probability $1-p$.  
The critical window for percolation on $\CM$ in the heavy-tailed setting was studied in \cite{DHLS16,BDHS17}, and is defined by the values of $p$ given by
\begin{equation}\label{eq:critical-window-defn}
	p_c(\lambda) = \frac{1}{\nu_n}+\frac{\lambda}{n^{\eta}}+o(n^{-\eta}).
\end{equation}
Let $\sC_{\sss (i)}(p_c(\lambda))$ denote the $i$-th largest component of the graph obtained by percolation with probability $p_c(\lambda)$ on the graph $\CM$. 
Then the following result holds:
\begin{theorem}[Global lower mass-bound for critical percolation]\label{cor:GLM-percoltion}
	Under {\rm Assumptions~\ref{assumption1}(i), \ref{assumption1}(ii), \ref{assumption-extra},} \eqref{eq:defn-super-crit} and \eqref{eq:critical-window-defn}, $(\sC_{\sss (i)}(p_c(\lambda)),n^{-\eta})_{n\geq 1}$ satisfies the global lower mass-bound property, for each fixed $i\geq 1$. This result also holds for percolation on $\UM$.
\end{theorem}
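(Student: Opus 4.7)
The plan is to deduce Theorem~\ref{cor:GLM-percoltion} from Theorem~\ref{thm:gml-bound} by means of the standard explosion coupling used in~\cite{DHLS16,BDHS17} for percolation on the configuration model. Informally, one first builds an auxiliary configuration model $\mathrm{CM}_{n+n_b}(\bld{d}^{\mathrm{exp}})$ by re-routing each half-edge of $\mathrm{CM}_n(\bld{d})$ to a freshly created degree-one vertex, independently with probability $1-\sqrt{p_c(\lambda)}$; the percolation cluster $\sC_{\sss(i)}(p_c(\lambda))$ is then precisely the non-auxiliary subgraph of the $i$-th largest component of the exploded model, since the auxiliary leaves have degree one and therefore no path between non-auxiliary vertices in the exploded model can pass through them.

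The first step is to check that $\bld{d}^{\mathrm{exp}}$ satisfies Assumption~\ref{assumption1}, Assumption~\ref{assumption-extra}, and the criticality condition~\eqref{defn:criticality}. Since the critical window~\eqref{eq:critical-window-defn} places $p_c(\lambda)$ at distance $O(n^{-\eta})$ from $1/\nu\in(0,1)$, binomial thinning by $\sqrt{p_c(\lambda)}$ modifies all relevant quantities only by factors bounded away from $0$ and $\infty$. Concentration of $\mathrm{Bin}(d_i,\sqrt{p_c(\lambda)})$ for each fixed $i$ gives $n^{-\alpha}d_i^{\mathrm{exp}}\to\theta_i/\sqrt{\nu}$ with the limit still in $\ell^3_{\shortarrow}\setminus\ell^2_{\shortarrow}$; the moment conditions~\eqref{eqn:669}--\eqref{eqn:670} transfer via a standard truncation; one computes $\nu_n^{\mathrm{exp}}=1+\lambda^{\mathrm{exp}}n^{-\eta}+o(n^{-\eta})$ for an explicit $\lambda^{\mathrm{exp}}\in\R$ depending on $\lambda,\nu,\mu_2$; and the conditions (i)--(iii) of Assumption~\ref{assumption-extra} are preserved up to multiplicative constants, because the size-biased tail bound~\eqref{eq:defn-D-n-lb} is invariant under bounded-probability thinning, and the sums $\beta_i^n$ scale by $p_c(\lambda)\to 1/\nu$.

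Once these assumptions are verified, Theorem~\ref{thm:gml-bound} applied to the exploded model yields a global lower mass-bound for its largest components. To transfer this bound to $\sC_{\sss(i)}(p_c(\lambda))$, observe that a $\delta n^\eta$-ball in the exploded model around a non-auxiliary vertex $v$ consists of the $\delta n^\eta$-ball in the percolation cluster together with a collection of dangling auxiliary leaves, where each non-auxiliary vertex $u$ within distance $\delta n^\eta-1$ contributes at most $d_u$ leaves. Consequently the exploded and percolation ball sizes differ by at most $\sum_{u\in \mathcal{N}_v^{\mathrm{perc}}(\delta n^\eta)}d_u$, which is of the same order as $|\mathcal{N}_v^{\mathrm{perc}}(\delta n^\eta)|$ provided the ball avoids the hubs. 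The $\UM$ statement then follows by conditioning on simplicity, as in Theorem~\ref{cor:GLM-uniform}.

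The main technical obstacle is precisely this last transfer step: one must ensure that the non-auxiliary mass of an exploded $\delta n^\eta$-ball is a definite fraction of its total mass, which amounts to controlling the total degree $\sum_{u\in \mathcal{N}_v^{\mathrm{perc}}(\delta n^\eta)}d_u$ uniformly in $v\in\sC_{\sss(i)}(p_c(\lambda))$. This reduces to showing that hubs (vertices of degree $\Theta(n^\alpha)$) do not cluster inside a typical $\delta n^\eta$-neighborhood at criticality, which can be handled via a union bound over the early stages of the exploration process already used to control component sizes in~\cite{DHLS16,BDHS17} and in the proof of Theorem~\ref{thm:gml-bound}.
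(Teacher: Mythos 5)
Your proposal takes a genuinely different route from the paper, and the route you chose introduces a gap that the paper's route avoids entirely.

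The paper's proof does not use an explosion construction at all. It relies on the cleaner distributional identity from \cite[Lemma 3.2]{F07}: writing $\cG_n$ for the graph obtained by bond percolation with retention probability $p_c(\lambda)$ on $\CM$, and $\bld{d}^p = (d_i^p)_{i\in[n]}$ for its (random) degree sequence, one has that conditionally on $\bld{d}^p$, the law of $\cG_n$ is exactly that of $\mathrm{CM}_n(\bld{d}^p)$ --- a configuration model on the \emph{same} $n$ vertices, with no auxiliary leaves. The proof then reduces immediately to Theorem~\ref{thm:gml-bound}: one only has to check that $\bld{d}^p$ satisfies, with high probability, Assumptions~\ref{assumption1},~\ref{assumption-extra} and the criticality condition~\eqref{defn:criticality} (this verification is Section~\ref{sec:perc-degrees}, using $d_i^p\sim\mathrm{Bin}(d_i,p_c(\lambda))$, negative-association and bounded-difference concentration, and the explicit construction in Algorithm~\ref{algo:perc-degrees}). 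Because the state space is still $[n]$, every ball $\cN_v(\delta)$ in $\cG_n$ already carries the counting measure that appears in Theorem~\ref{thm:gml-bound}, and no transfer step is needed.

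Your explosion approach has a genuine gap precisely at that transfer step. You need to deduce a lower bound on $|\cN_v^{\mathrm{perc}}(\delta)|$ from the lower bound on $|\cN_v^{\mathrm{exp}}(\delta)|$ that Theorem~\ref{thm:gml-bound} (applied to the exploded model) would give. Writing $A_v$ for the number of auxiliary leaves in the exploded ball, you have $|\cN_v^{\mathrm{perc}}(\delta)| = |\cN_v^{\mathrm{exp}}(\delta)| - A_v$, so you need to show $A_v$ is at most a fraction strictly less than one of $|\cN_v^{\mathrm{exp}}(\delta)|$, uniformly over $v$ in the component. Your bound $A_v\le \sum_{u\in\cN_v^{\mathrm{perc}}(\delta)} d_u$ and the assertion that this sum is ``of the same order as $|\cN_v^{\mathrm{perc}}(\delta)|$ provided the ball avoids the hubs'' does not close this: first, $\delta n^{\eta}$-balls in the critical component \emph{do} contain hubs (that is exactly what Proposition~\ref{prop:size-nbd bound} and the whole architecture of the proof of Theorem~\ref{thm:gml-bound} are about), so conditioning on avoiding them is not available; second, even when hubs are present, bounding $A_v$ by a constant times $|\cN_v^{\mathrm{perc}}(\delta)|$ is circular --- $|\cN_v^{\mathrm{perc}}(\delta)|$ is the very quantity whose lower bound is being sought, and nothing rules out a priori that the non-auxiliary mass is a vanishing fraction of the exploded ball. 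One clean way to repair this within your framework would be to invoke the weighted version of Theorem~\ref{thm:gml-bound} alluded to in the remark following it, with vertex weights $w_v=\1\{v\ \mathrm{original}\}$, so that the lower mass bound on the exploded model directly controls the non-auxiliary mass; but this is not what you propose, and without it the transfer step as written does not go through. There are additional, lesser issues with the setup (the re-routing should be performed on the half-edges before the matching rather than on an already-built $\CM$; the component ranking in the exploded model need not match that of the percolation clusters; the assumptions would need to be re-stated with the vertex count $n+n_b$), but the transfer step is the essential gap.
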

Let $\cG_n$ denote the graph obtained by doing percolation with edge retention  probability $p_c(\lambda)$ (defined in \eqref{eq:critical-window-defn}) on $\CM$.
Let $\bld{d}^p=(d_i^p)_{i\in [n]}$ denote the degree sequence of $\cG_n$. 
By \cite[Lemma 3.2]{F07}, the conditional law of $\cG_n$, conditionally on $\bld{d}^p$, is same as the law of $\rCM_n(\bld{d}^p)$. 
Thus, Theorem~\ref{cor:GLM-percoltion} follows from Theorem~\ref{thm:gml-bound} if we can show that the percolated degree sequence~$\bld{d}^p$ satisfies (with possibly different parameters) Assumptions~\ref{assumption1}~and~\ref{assumption-extra} with high probability when the original degree sequence $(d_i)_{i\in [n]}$ satisfies Assumptions~\ref{assumption1}(i),~\ref{assumption1}(ii),~\ref{assumption-extra}, and also  \eqref{defn:criticality} holds \blue{for~$\bld{d}^p$ if further the percolation probability is given by \eqref{eq:critical-window-defn}}. 
The verification of these assumptions are provided in Section~\ref{sec:perc-degrees}. 

\begin{remark}\normalfont 
	It is worthwhile to point out that Theorem~\ref{thm:gml-bound} can be proved when the $\sC_{\sss (i)}$'s are endowed with a more general measure rather than the counting measure. 
	To be precise, for any sequence of vertex weights $(w_v)_{v\in [n]}$, the component $\sC_{\sss (i)}$ can be equipped with the measure $\mu_{\sss (i)} (A) = \sum_{v\in A} w_v / \sum_{v\in \sC_{\sss (i)}} w_v$, for any $A \subset \sC_{\sss (i)}$. 
	Then Theorem~\ref{thm:gml-bound} can also be proved using identical methods as in this paper, with the additional assumptions that 
	$$\lim_{n\to\infty}\frac{1}{\ell_n}\sum_{i\in [n]} d_i w_i = \mu_{w}, \quad \max\bigg\{\sum_{i\in [n]}d_iw_i^2,\sum_{i\in [n]}d_i^2w_i\bigg\} = O(n^{3\alpha}). $$
	These additional assumptions are required when we apply the results from \cite{DHLS16} (see \cite[Theorem 21]{DHLS16}). 
	We adopted the simpler version of the counting measure here because it relates directly to \cite[Theorem 2.1]{BDHS17}. 
\end{remark}

\subsection{Discussion} \label{sec:discussion}
\paragraph*{Scaling limit of critical percolation clusters.}
We write $n^{-\eta} \mathscr{C}_{\sss (i)} (p_c(\lambda))$ to denote the $i$-th largest component of $\mathrm{CM}_n(\bld{d},p_c(\lambda))$, viewed as a measured metric space with the metric being the graph distance re-scaled by $n^{\eta}$, and the measure being proportional to the counting measure.
Athreya, L\"ohr, and Winter~\cite{ALW16-2} showed that the global lower mass-bound property forms a crucial ingredient to prove convergence of random metric spaces such as $n^{-\eta} \mathscr{C}_{\sss (i)} (p_c(\lambda))$ with respect to the Gromov-Hausdorff-Prokhorov (GHP) topology on the space of compact metric spaces. 
The other key ingredient is the scaling limit for $n^{-\eta}\sC_{\sss (i)}(p_c(\lambda))$ with respect to the Gromov-weak topology, which was established in \cite[Theorem 2.1]{BDHS17}.
The Gromov-weak topology is an analogue of  finite-dimensional convergence, since it considers distances between a finite number of sampled points from the underlying metric space.
Thus, global functionals such as the diameter are not continuous with respect to this topology. 
Indeed, it may be the case that there is a long path of growing length, that has asymptotically negligible mass.
In our context, the problem could arise due to paths of length much larger than $ n^{\eta}$. 
The global lower mass-bound property ensures that the components have sufficient mass everywhere. This forbids the existence of long thin paths, when the total mass of the component converges. 
For this reason, Gromov-weak convergence and global lower mass-bound together imply GHP-convergence when the support of the limiting measure is the entire limiting space \cite[Theorem 6.1]{ALW16-2}.
For formal definitions of the Gromov-weak topology, and the GHP-topology on the space of compact measured metric spcaes, we refer the reader to \cite{BHS15,GPW09,ALW16-2}.

Following the above discussion, the next theorem is a direct  consequence of  Theorem~\ref{cor:GLM-percoltion}, \cite[\red{Theorem 2.3}]{BDHS17} and \cite[Theorem 6.1]{ALW16-2}: 
Let $\mathbb{M}$ denote the space of measured compact metric spaces equipped with the GHP-topology, and let $\mathbb{M}^\N$ denote the product space with the associated product topology.
\begin{theorem}[GHP convergence of critical percolation clusters]\label{thm:GHP}
	There exists a sequence of measured metric spaces $(\blue{\mathscr{M}_i})_{i\geq 1} = ((M_i,\dst_i,\mu_i))_{i\geq 1} \in \mathbb{M}^\N$ such that, under {\rm Assumptions~\blue{\ref{assumption1}(i),~\ref{assumption1}(ii),}~\ref{assumption-extra}}\blue{, \eqref{eq:defn-super-crit}} and \eqref{eq:critical-window-defn}, as $n\to\infty$,
	\begin{eq}
		(n^{-\eta} \mathscr{C}_{\sss (i)}(p_c(\lambda)))_{\blue{i\geq 1}} \dto (\blue{\mathscr{M}_i})_{i\geq 1} \quad \text{ in } \ \mathbb{M}^\N.
	\end{eq}
	Moreover, the results also hold for $\mathrm{UM}_n(\bld{d},p_c(\lambda))$.
\end{theorem}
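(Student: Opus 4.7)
\textbf{Proof proposal for Theorem~\ref{thm:GHP}.} The plan is to combine three ingredients: (i) the Gromov-weak convergence of the rescaled critical percolation clusters established in \cite[Theorem 2.1]{BDHS17}; (ii) the global lower mass-bound provided by Theorem~\ref{cor:GLM-percoltion}; and (iii) the general upgrade criterion of Athreya, L\"ohr and Winter \cite[Theorem 6.1]{ALW16-2}, which says that a sequence of random measured metric spaces converges in the GHP-topology whenever it converges in the Gromov-weak topology, it satisfies the global lower mass-bound, and the limiting measure has full support on the limiting space.

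First, I would invoke \cite[Theorem 2.1]{BDHS17} to identify a sequence of limiting measured metric spaces $(\mathcal{M}_i)_{i\geq 1}=((M_i,\dst_i,\mu_i))_{i\geq 1}$ such that, jointly in $i$, $n^{-\eta}\mathscr{C}_{\sss (i)}(p_c(\lambda)) \dto \mathcal{M}_i$ in the Gromov-weak topology. Next I would apply Theorem~\ref{cor:GLM-percoltion} to assert tightness of $(\mathfrak{m}_i^n(\delta)^{-1})_{n\geq 1}$ for each fixed $i\geq 1$ and each $\delta>0$. Combined with Gromov-weak convergence, tightness in GHP for each coordinate $i$ then follows from \cite[Theorem 6.1]{ALW16-2}, provided that each $\mu_i$ has full support on $M_i$.

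The support condition is the one nontrivial verification. For this, I would read off from the explicit construction of the scaling limits in \cite[Section 2]{BDHS17} that the limit spaces $\mathcal{M}_i$ arise as quotients of tilted inhomogeneous continuum random trees (with certain shortcut identifications added), equipped with the pushforward of the natural mass measure on the tree. Since the tree measure has full support on the tree and the quotient map is a surjective continuous identification onto $M_i$, the pushforward $\mu_i$ has full support on $M_i$. Together with the Gromov-weak limit and the global lower mass-bound, \cite[Theorem 6.1]{ALW16-2} then gives $n^{-\eta}\mathscr{C}_{\sss (i)}(p_c(\lambda)) \dto \mathcal{M}_i$ in $\mathbb{M}$ for each fixed $i$.

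Finally, to upgrade to convergence in the product space $\mathbb{M}^{\mathbb{N}}$, I would use the standard fact that convergence in a countable product topology is equivalent to coordinate-wise convergence in the joint sense. Since \cite[Theorem 2.1]{BDHS17} already provides joint Gromov-weak convergence of $(n^{-\eta}\mathscr{C}_{\sss (i)}(p_c(\lambda)))_{i\geq 1}$, and Theorem~\ref{cor:GLM-percoltion} yields the global lower mass-bound for each coordinate, the Skorokhod representation theorem together with the coordinate-wise GHP upgrade delivers joint convergence in $\mathbb{M}^{\mathbb{N}}$. The assertion for $\mathrm{UM}_n(\bld{d},p_c(\lambda))$ is then immediate by conditioning on simplicity and using that $\liminf_{n\to\infty}\PR(\CM \text{ is simple})>0$ under the moment hypotheses of Assumption~\ref{assumption1}. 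The main obstacle I foresee is the verification of the full-support property for the limit measures $\mu_i$; once that is settled, everything else is an application of existing general theorems.
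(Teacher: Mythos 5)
Your proposal is correct and follows essentially the same route as the paper: combining the Gromov-weak convergence from \cite[Theorem 2.1]{BDHS17}, the global lower mass-bound of Theorem~\ref{cor:GLM-percoltion}, and the upgrade criterion \cite[Theorem 6.1]{ALW16-2}, with the full-support condition noted as the remaining verification and the $\mathrm{UM}_n$ case handled by conditioning on simplicity. The paper states the theorem as a direct consequence of exactly these three ingredients, so your account is in agreement, merely spelling out a bit more detail.
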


The exact description of the space $\mathscr{M}_i$ can be found in \cite{BDHS17}. 
It is worthwhile mentioning a recent work by Conchon-Kerjan and Goldschmidt~\cite{CG20} which is closely related to Theorem~\ref{thm:GHP}. 
Conchon-Kerjan and Goldschmidt~\cite{CG20} deduce scaling limits for the vector of components in GHP-topology for critical configuration models having i.i.d power law degrees with exponent $\tau\in (3,4)$. 
\blue{In Remarks~\ref{rem:assumption-1} and \ref{rem:assumption-2}, we noted that  Assumptions~\ref{assumption1}(i),~\ref{assumption1}(ii),~and~\ref{assumption-extra} hold when the degrees are i.i.d samples from a power-law distribution with exponent $\tau \in (3,4)$. Therefore, Theorem~\ref{thm:GHP}  implies that the conditional law of $(n^{-\eta} \mathscr{C}_{\sss (i)}(p_c(\lambda)))_{i\geq 1}$, conditioned on the i.i.d degree sequence, converges to the law of $(\blue{\mathscr{M}_i})_{i\geq 1}$  in  $\mathbb{M}^\N$ for almost every realization of the i.i.d degree sequence. Hence,  Theorem~\ref{thm:GHP} gives a quenched result whereas \cite{CG20} proves an annealed result.} The method of \cite{CG20}  relies on an alternative approach showing convergence of the height processes corresponding to the components. 
The associated limiting object was studied in \cite{GHS18},  which interestingly turns out to have a quite different description than those in \cite{BDHS17,BHS15}.

\paragraph*{Scaling limit of maximal distances.}
For any metric space $(X,\dst)$ and a point $x\in X$, define the radius of $x$ in $X$ and the diameter of $X$ by 
\begin{eq}
	\mathrm{Rad}(x,X) = \sup_{y\in X} \dst(x,y)\quad \text{and} \quad \mathrm{diam}(X) = \sup_{x\in X} \mathrm{Rad}(x,X) = \sup_{x,y\in X} \dst(x,y).
\end{eq}
An important corollary of Theorem~\ref{thm:GHP} is the convergence of the radius and the diameter of the critical components:
Let $V_{n,i}$ be a uniformly chosen vertex  in $\mathscr{C}_{\sss (i)}(p_c(\lambda))$, where $(V_{n,i})_{i\geq 1}$ is an independent collection conditionally on $(\mathscr{C}_{\sss (i)}(p_c(\lambda)))_{i\geq 1}$. 
Similarly, using the notation of the scaling limits in Theorem~\ref{thm:GHP},
let $V_i$ be chosen from $M_i$ according to the measure $\mu_i$ and let $(V_i)_{i\geq 1}$ be an independent collection conditionally on $(\blue{\mathscr{M}_i})_{i\geq 1}$.

\begin{corollary}[Convergence of radius and diameter] \label{cor:diameter}
	Under {\rm Assumptions~\blue{\ref{assumption1}(i),~\ref{assumption1}(ii),}~\ref{assumption-extra}}\blue{, \eqref{eq:defn-super-crit}} and \eqref{eq:critical-window-defn}, as $n\to\infty$,
	\begin{eq} 
		\big(n^{-\eta}\mathrm{Rad} (V_{n,i},\mathscr{C}_{\sss (i)}(p_c(\lambda)))\big)_{i\geq 1} &\dto (\mathrm{Rad}(V_i,\blue{\mathscr{M}_i}))_{i\geq 1}, \\
		\big(n^{-\eta}\diam(\sC_{\sss (i)}(p_c(\lambda)))\big)_{i\geq 1} &\dto (\mathrm{diam}(\blue{\mathscr{M}_i}))_{i\geq 1},
	\end{eq}
	with respect to the product topology, where $(\blue{\mathscr{M}_i})_{i\geq 1}$ is given by  {\rm Theorem~\ref{thm:GHP}}. 
	Moreover, the result also holds for $\UM$.
\end{corollary}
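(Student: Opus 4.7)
The plan is to derive both statements as direct corollaries of the GHP-convergence established in Theorem~\ref{thm:GHP}, since both the diameter and the radius from a typical point are essentially continuous functionals of a measured compact metric space.

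First I would dispose of the diameter. On the space $\mathbb{M}$ equipped with the GHP-topology, the diameter is continuous; indeed it is $2$-Lipschitz with respect to the Gromov--Hausdorff distance, because two compact spaces isometrically embedded in a common ambient space differ in diameter by at most twice the Hausdorff distance between their images. This continuity extends coordinatewise to $\mathbb{M}^\N$ with the product topology, so the continuous mapping theorem applied to Theorem~\ref{thm:GHP} immediately yields the desired convergence of the rescaled diameters, both for $\CM$ and for $\UM$.

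The radius statement requires a bit more care, since one must pair each rescaled component with a vertex sampled from its (rescaled) counting measure and show convergence of the resulting scalar. My plan is to upgrade Theorem~\ref{thm:GHP} to a joint convergence in the pointed-GHP topology, from which the claim follows by continuity of the functional $(X,d,x) \mapsto \sup_{y \in X} d(x,y)$ on pointed compact metric spaces. Using Skorokhod's representation theorem I would first pass to a probability space on which $n^{-\eta} \sC_{\sss (i)}(p_c(\lambda)) \to \cM_i$ almost surely in $\mathbb{M}^\N$. For each $i$ and $n$, isometrically embed both $n^{-\eta} \sC_{\sss (i)}(p_c(\lambda))$ and $\cM_i$ into a common compact metric space $(Z_{n,i}, d_{Z_{n,i}})$ in such a way that the Hausdorff distance between their images and the Prokhorov distance between the pushed-forward probability measures both tend to zero. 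Conditional on this coupling, sample $V_{n,i}$ uniformly from $\sC_{\sss (i)}(p_c(\lambda))$; since Prokhorov convergence on a compact set yields weak convergence of the sampled points, a further Skorokhod step lets me couple so that $V_{n,i} \to V_i$ almost surely in $Z_{n,i}$. A standard triangle-inequality argument in $Z_{n,i}$ then gives
\begin{equation*}
\bigl| n^{-\eta}\mathrm{Rad}(V_{n,i}, \sC_{\sss (i)}(p_c(\lambda))) - \mathrm{Rad}(V_i, \cM_i) \bigr| \leq d_{Z_{n,i}}(V_{n,i}, V_i) + d_H\bigl(n^{-\eta} \sC_{\sss (i)}(p_c(\lambda)),\, \cM_i\bigr) \longrightarrow 0
\end{equation*}
almost surely, which yields the radius convergence. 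Joint convergence in the product topology over $i$ follows from the joint GHP-convergence in Theorem~\ref{thm:GHP} together with the conditional independence of the $V_{n,i}$'s across $i$, and the $\UM$ variant is identical.

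The only genuine (but mild) obstacle is this coupling step: one must make sure that weak convergence of the measures really does upgrade to an almost-sure coupling of the sampled points, carried out simultaneously across all $i$ and compatibly with the GHP-coupling already chosen. This is a routine piece of measure-theoretic bookkeeping (tightness in a compact ambient space plus Skorokhod applied to the pair (metric space, sampled point)), and is the only non-cosmetic ingredient beyond Theorem~\ref{thm:GHP}.
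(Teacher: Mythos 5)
Your proposal is correct and follows the natural (and essentially unique) route: since the paper leaves the proof of Corollary~\ref{cor:diameter} implicit as a direct consequence of Theorem~\ref{thm:GHP}, you have simply filled in the standard details. The two ingredients you use, the $2$-Lipschitz continuity of $\diam$ on $(\mathbb{M}, \dGHP)$ plus the continuous mapping theorem for the diameter, and a Skorokhod coupling in a common ambient space to handle the sampled-point radius, are exactly what the authors intend, and your triangle-inequality bound
$|\mathrm{Rad}(V_{n,i}, \cdot) - \mathrm{Rad}(V_i, \cdot)| \le d_{Z_{n,i}}(V_{n,i},V_i) + d_H(\cdot,\cdot)$
is the right estimate. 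The only point worth flagging, which you already flag yourself, is that the Skorokhod step for the sampled points must be carried out jointly across $i$ and compatibly with the GHP-coupling; this is routine since $\mathbb{M}^{\N}$ is Polish and the $V_{n,i}$ are conditionally independent given the components, so the argument is complete.
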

Proving scaling limits for the diameter of the critical tree-like objects is often a difficult task.  
In \cite{Sze83}, Szekeres proved that, for the uniform random rooted labelled tree on $m$ vertices, the diameter, rescaled by $\sqrt{m}$, converges in distribution. 
Szekeres also provided an explicit formula for the density of the limiting distribution in \cite[Page 395, (12)]{Sze83}. 
Szekeres' method was based on generating functions. {\L}uczak~\cite{Luc95} also considered enumeration of trees with diameter $\gg \sqrt{m}$. 
On the other hand, Aldous~\cite{Ald91} (see \cite[Section 3.4]{Ald91}) noted that the GHP-convergence can be used as an effective tool to prove scaling limit results for the diameter. 
This is the motivating idea behind Corollary~\ref{cor:diameter}.
Aldous~\cite{Ald91} also raised a natural question whether it is possible to obtain an explicit formula from a result such as Corollary~\ref{cor:diameter}.
In a recent paper, Wang~\cite{Wang15} showed that it is indeed possible to get such a formula for the Brownian tree. In the context of Corollary~\ref{cor:diameter}, the difficulty is two-fold: 
First, the critical components have surplus edges. 
For the scaling limits of critical Erd\H{o}s-R\'enyi random graphs, Miermont and Sen \cite{MS19} recently gave a breadth-first construction, which yields an alternative description of the scaling limit of the radius function from a fixed point (rescaled by $n^{1/3}$).
However, the description for the diameter and an explicit formula such as the one by Wang~\cite{Wang15} is still an open question.
Second, the scaling limit in Corollary~\ref{cor:diameter} is in the heavy-tailed universality class. Even for $\mathbf{p}$-trees (see \cite{CP99}) that satisfy $p_i/(\sum_i p_i^2)^{1/2} \to \beta_i>0$, with $(\beta_i)_{i\geq 1}\in \ell^2_{\shortarrow}\setminus\ell^1_{\shortarrow}$, obtaining an explicit description for the \blue{limiting} distribution of the diameter is an interesting question.

\paragraph*{Compactness of the limiting metric space.}
The \red{limiting spaces $\mathscr{M}_i$ are constructed by tilting the distribution of an inhomogeneous continuum random tree (ICRT), and then identifying a Poisson number of vertices to create cycles. 
	This object is well-defined as a metric measure space for $\bld{\theta} \in \ell^{3}_{\shortarrow} \setminus \ell^2_{\shortarrow}$.
	However, it may not be compact for all $\bld{\theta} \in \ell^{3}_{\shortarrow} \setminus \ell^2_{\shortarrow}$.}
It is interesting to find an explicit criterion  for the compactness of the limiting objects $\mathscr{M}_i$ in terms of the underlying parameters.

Indeed, in the context of compactness of \blue{ICRTs}, Aldous, Miermont, and Pitman \cite[Section~7]{AMP04} \blue{state} an additional condition, which was conjectured to be necessary and sufficient  for the compactness of \blue{ICRTs}.
This conjectured was recently proved in \cite{blanc2022}.
In the context of critical random graphs, a recent paper by Broutin, Duquesne, and Wang \cite{BDW18} shows that
the following criterion, analogous to \cite{AMP04}, is sufficient for the almost sure compactness of $\mathscr{M}_i$ \footnote{\red{\textbf{Note: }The condition \eqref{eq:iff-compactness} does not hold for all $\bld{\theta} \in \ell^3_{\shortarrow} \setminus \ell^2_{\shortarrow}$. Indeed,  take $\theta_i = i^{-1/2} $. For $u\in (\theta_2^{-1},\infty )$, let $i_0 = i_0(u)$ be such that $\theta_{i_0}^{-1}<u\leq \theta_{i_0+1}^{-1}$, i.e., $\sqrt{i_0} < u\leq \sqrt{i_0+1}$. Then, 
		\begin{eq}
			\Psi_{\theta}(u) \leq  C\bigg[\sum_{i\leq i_0}\theta_i (u\theta_i) +\sum_{i>i_0} \theta_i (u\theta_i)^2\bigg] =C\bigg[u\sum_{i< u^2}\frac{1}{i} + u^2\sum_{i\geq u^2} \frac{1}{i^{3/2}}\bigg]  \leq  C[u \log u+u], 
		\end{eq}
		and thus \eqref{eq:iff-compactness} cannot hold.}}:
\begin{eq}\label{eq:iff-compactness}
	\int_1^\infty \frac{\dif u}{\Psi_{\theta} (u)} <\infty, \quad \text{where} \quad \Psi_{\theta} (u) = \sum_{i\geq 1} \theta_i (\e^{-u\theta_i} -1+u\theta_i).
\end{eq}
Our GHP convergence \blue{from Theorem~\ref{thm:GHP}} indirectly yields a  sufficient condition for the compactness of the limiting metric space almost surely, \red{by considering an asymptotic version of Assumption~\ref{assumption-extra}:  
	Suppose $\bld{\theta} \in \ell^3_{\shortarrow} \setminus \ell^2_{\shortarrow}$ and there exist constants $c_0>0$ and $c_1>1$ such that}
\begin{eq}\label{eq:compactness}
	\red{x^{\tau-2} \times \sum_{i=1}^\infty \theta_i \ind{x < \theta_i \leq c_1x} \geq c_0
		\ \  \text{ for all } x\in (0,\theta_1).  }
\end{eq}
\red{The fact that \eqref{eq:compactness} is a sufficient condition for the compactness of $\mathscr{M}_i$ follows immediately from Theorem~\ref{thm:GHP} and the following proposition: 
	\begin{proposition}\label{prop:deg-compact}
		Consider any $\bld{\theta} \in  \ell^3_{\shortarrow} \setminus \ell^2_{\shortarrow}$ such that \eqref{eq:compactness} holds. Then there exists a sequence of degree sequences satisfying {\rm Assumptions~\ref{assumption1}(i), \ref{assumption1}(ii), \ref{assumption-extra}}, and \eqref{eq:defn-super-crit}.
	\end{proposition}
	\noindent We will prove Proposition~\ref{prop:deg-compact} in Appendix~\ref{sec:appendix-comapctness}. A natural question is how the conditions in \eqref{eq:iff-compactness} and \eqref{eq:compactness}  compare. We argue below that, in fact,  \eqref{eq:compactness} is strictly stronger than \eqref{eq:iff-compactness}. }


\red{Recall that $C>0$ is a generic notation for a constant whose value can be different in different expressions. We first show that \eqref{eq:compactness} implies \eqref{eq:iff-compactness}. 
	Suppose $\theta_i >\theta_{i+1}$.
	Then
	\begin{eq}\label{cond:simplified}
		\theta_{i+1}^{\tau-2} \sum_{j=1}^i \theta_j 
		\geq 
		\theta_{i+1}^{\tau-2} \sum_{j=1}^\infty \theta_j\cdot\ind{\theta_{i+1}<\theta_j\leq c_1\theta_{i+1}} \geq c_0,
	\end{eq}
	where the last step uses~\eqref{eq:compactness}.
}
Now, for $u\in (\frac{1}{\theta_i}, \frac{1}{\theta_{i+1}}]$, 
\begin{eq}\label{eq:psi-theta-lb}
	\Psi_{\theta} (u) 
	&\geq 
	C \bigg[\sum_{k=1}^i u\theta_k^2 + \sum_{k=i+1}^\infty u^2 \theta_k^\blue{3}\bigg] 
	\geq 
	Cu\theta_{i+1}\sum_{k=1}^i \theta_k \\
	&\red{= 
		\frac{Cu\theta_{i+1}^{\tau -2 }\sum_{k=1}^i \theta_k}{\theta_{i+1}^{\tau -3 }} }
	\geq 
	\frac{Cu c_0}{\theta_{i+1}^{\tau -3 }} 
	\geq 
	C c_0 u^{\tau-2}.
\end{eq}
Thus, 
$\int_{\theta_1^{-1}}^\infty \frac{\dif u}{\Psi_{\theta} (u)} 
\leq 
C \int_{\theta_1^{-1}}^\infty u^{-(\tau-2)} \dif u 
<
\infty$, 
since $\tau >3$.
This yields~\eqref{eq:iff-compactness}.

\red{To see that the implication is strict, take  $\theta_i = (i^{\alpha} \log (i+2))^{-1} $. 
	Then
	\begin{eq}\label{eqn:1}
		\theta_{i+1}^{\tau -2} \sum_{j=1}^i \theta_j \leq \big((i+1)^\alpha\log (i+3)\big)^{-(\tau-2)} \sum_{j=1}^i j^{-\alpha}\leq \frac{C}{\log ^{\tau-2} i},
	\end{eq}
	which tends to zero as $i\to\infty$. 
	However, as we have seen in \eqref{cond:simplified}, \eqref{eq:compactness} would imply that the left side of~\eqref{eqn:1} is bounded away from zero.
	Thus, \eqref{eq:compactness} does not hold in this case.
	To see that \eqref{eq:iff-compactness} does hold, note that $\theta_{i} \geq \theta_i':= i^{-\alpha'}$ for all large enough $i$, where $\alpha'=\frac{1}{\tau'-1}$ and $3<\tau'<\tau$. 
	Then $(\theta_i')_{i\geq 1}$ satisfies \eqref{eq:compactness}. 
	Therefore, a computation similar to \eqref{eq:psi-theta-lb} yields, 
	for $u\in (\frac{1}{\theta_i},  \frac{1}{\theta_{i+1}}]$, 
	\begin{eq}\label{eq:psi-theta-lb-2}
		\Psi_{\theta} (u) & \geq 
		Cu\theta_{i+1}\sum_{k=1}^i \theta_k \geq Cu\theta_{i+1}'\sum_{k=1}^i \theta_k'\geq \frac{Cu}{(\theta_{i}')^{\tau -3 }}.
	\end{eq}
	Since $u\leq (i+1)^{\alpha} \log (i+3)$, we can choose $\delta>0$ such that 
	$u^{1+\delta} \leq C i^{\alpha'} =  C/\theta_i'$. 
	Therefore, $\Psi_\theta (u) \geq C u^{1+(\tau-3) (1+\delta)}$. 
	Thus, \eqref{eq:iff-compactness} follows.
}

\paragraph*{Proof ideas and technical motivation for this work.} 
The proof of Theorem~\ref{thm:gml-bound} consists of two main steps, that form the key ideas in the argument. 
The first step is to show that the neighborhoods of the high-degree vertices, called \emph{hubs}, have mass  $\thetaP(n^{\rho})$. 
Secondly, any small~$\varepsilon n^{\eta}$ neighborhood \blue{contains a hub with high probability}.
These two facts, summarized in Propositions~\ref{prop:size-nbd bound} and~\ref{prop:diamter-small-comp} below, together ensure that the total mass of any neighborhood of $\sC_{\sss(i)}$ of radius $\varepsilon n^{\eta}$ is bounded away from zero. 
These two facts were proved in \cite{BHS15} in the context of rank-one inhomogeneous random graphs.
However, the proof techniques are completely different here.
The main advantage in \cite{BHS15} was that the breadth-first exploration of components could be dominated by a branching process with \emph{mixed Poisson} progeny distribution that is \emph{independent of $n$}. 
This allows one to use existing literature to estimate the probabilities that a long path exists in the branching process.
However, such a technique is specific to rank-one inhomogeneous random graphs and does not work in the cases where the above stochastic domination does not hold. 
This was one of the technical motivations for this work.
Moreover, the final section contains results about exponential tail-bounds for the number of edges in large critical components (Proposition~\ref{lem:volume-large-deviation}), as well as a coupling of the neighborhood exploration with a branching process with stochastically larger progeny distribution (Section~\ref{sec:BP-approximation}), which are both interesting in their own right.

\paragraph*{Organization of this paper.}
The rest of this paper is organized as follows: In Section~\ref{sec:proof-main-thm}, we state two key propositions, the first involving the total mass of small neighborhoods, and the second involving a bound on the diameter of a \emph{slightly subcritical} $\CM$. 
The proof of Theorem~\ref{thm:gml-bound} is completed in Section~\ref{sec:proof-main-thm}.
In Section~\ref{sec:total-mass-hubs}, we derive the required bounds on the total mass of small neighborhoods.
In Section~\ref{sec:diameter-after-removal}, we obtain bounds on the diameter of the connected components after removing the high-degree vertices. 
\blue{In Section~\ref{sec:perc-degrees}, we prove Assumptions~\ref{assumption1},~\ref{assumption-extra} for the percolated degree sequence, which allows us to conclude Theorem~\ref{cor:GLM-percoltion}.}

\section{Proof of the global lower mass-bound} \label{sec:proof-main-thm}
In this section, we first state the two key ingredients in Propositions~\ref{prop:size-nbd bound}~and~\ref{prop:diamter-small-comp}, and then complete the proof of Theorem~\ref{thm:gml-bound}.
The proofs of Propositions~\ref{prop:size-nbd bound}~and~\ref{prop:diamter-small-comp} are given in the subsequent sections.
The first ingredient shows that hub $i$ has sufficient mass close to it with high probability:
\begin{proposition}
	\label{prop:size-nbd bound}
	Assume that {\rm Assumptions~\ref{assumption1}}~and \eqref{defn:criticality} hold. Recall that $\mathcal{N}_v(\delta)$ denotes the $\delta n^{\eta}$ neighborhood of $v$. 
	For each fixed $i\geq 1$ and $\varepsilon_2>0$, there exists $\delta_{i, \varepsilon_2}>0$ and $n_{i,\varepsilon_2}\geq 1$ such that, for any $\delta\in (0,\delta_{i,\varepsilon_2}]$ and $n\geq n_{i,\varepsilon_2}$, 
	\begin{equation}\label{eq:size-nbd bound}
		\PR\big(|\mathcal{N}_i(\delta)|\leq  \theta_i \delta n^{\rho}\big)\leq \frac{\varepsilon_2}{2^{i+1}}.
	\end{equation}
\end{proposition}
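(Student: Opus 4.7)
The plan is to lower-bound the BFS ball of radius $\delta n^{\eta}$ around vertex $i$ by means of a branching-process approximation combined with a martingale concentration argument whose key input is the scaling identity $2\alpha+\eta=1$. First, I would perform a breadth-first exploration from vertex $i$, writing $V_t$ for the number of vertices at graph distance exactly $t$ from $i$, and $H_t$ for the number of outward half-edges from $V_t$. One has $V_0=1$, $H_0=d_i\sim\theta_i n^{\alpha}$, and since self-loops or multi-edges at $i$ occur with probability $O(d_i^2/\ell_n)=O(n^{2\alpha-1})=o(1)$, one gets $V_1=d_i(1+o_{\sss\PR}(1))$ with high probability.

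Next I would couple the exploration with a Galton--Watson branching process $(Z_t)_{t\ge 1}$ having $Z_1=d_i$ and offspring distribution $D_n^*-1$, where $D_n^*$ is the size-biased degree. The coupling remains valid as long as half-edge collisions are rare: since the total expected BP population up to depth $T_\delta:=\lfloor\delta n^{\eta}\rfloor$ is $O(\theta_i\delta n^{\rho})=o(n)$, standard birthday-type estimates yield a coupling error of size $o_{\sss\PR}(\theta_i\delta n^{\rho})$. The critical analytical input is concentration of the martingale $M_t:=Z_t/\nu_n^{t-1}$, which has $\E[M_t]=d_i$. A conditional-variance computation gives
\begin{equation*}
\mathrm{Var}(M_{T_\delta})=\sum_{t=2}^{T_\delta}\E\Big[\frac{Z_{t-1}\,\mathrm{Var}(D_n^*)}{\nu_n^{2(t-1)}}\Big]\le C\,d_i\,T_\delta\,\mathrm{Var}(D_n^*).
\end{equation*}
Assumption~\ref{assumption1}~\eqref{eqn:670} yields $\mathrm{Var}(D_n^*)\le \E[(D_n^*)^2]=\frac{1}{\ell_n}\sum_{j\in[n]}d_j^3 = O(n^{3\alpha-1})$. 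Substituting $T_\delta=\delta n^{\eta}$ gives $\mathrm{Var}(M_{T_\delta})=O(\delta\,\theta_i\,n^{4\alpha+\eta-1})=O(\delta\,\theta_i\,n^{2\alpha})$, using the scaling identity $2\alpha+\eta=1$. Hence $\mathrm{Var}(M_{T_\delta})/d_i^2=O(\delta/\theta_i)$, which becomes arbitrarily small when $\delta$ is chosen small in terms of $\theta_i$.

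Applying Doob's $L^2$-maximal inequality to the submartingale $(d_i-M_t)_+$ then yields
\begin{equation*}
\PR\Big(\min_{1\le t\le T_\delta}M_t\le \tfrac{1}{2}d_i\Big)\le \frac{4\,\mathrm{Var}(M_{T_\delta})}{d_i^2}=O(\delta/\theta_i),
\end{equation*}
which, by choosing $\delta=\delta_{i,\varepsilon_2}$ sufficiently small, is at most $\varepsilon_2/2^{i+1}$. On the corresponding good event, $Z_t\ge \tfrac{1}{2}d_i\nu_n^{t-1}$ uniformly for $1\le t\le T_\delta$, so summing the near-geometric progression and using $\nu_n^{T_\delta}\to e^{\lambda\delta}$ (bounded away from $0$ for small $\delta$, regardless of the sign of $\lambda$) produces $\sum_{t=1}^{T_\delta}Z_t\ge c\,\theta_i\,\delta\,n^{\rho}$ for a suitable constant $c>0$. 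Transferring this bound from the BP to the BFS via the coupling (incurring only an $o_{\sss\PR}(\theta_i\delta n^{\rho})$ error from collisions) and absorbing multiplicative constants into the choice of $\delta_{i,\varepsilon_2}$ yields the claimed bound.

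The main obstacle is the heavy-tailed offspring distribution: $\mathrm{Var}(D_n^*)$ diverges polynomially in $n$, so naive concentration arguments are insufficient. The scaling identity $2\alpha+\eta=1$ is what saves the day, forcing the variance-to-mean-squared ratio of $M_{T_\delta}$ to scale as $\delta/\theta_i$ and thus be small for small $\delta$. A secondary issue is the BFS-to-BP coupling error coming from half-edge collisions; this is controlled by noting that the total explored mass is $O_{\sss\PR}(\theta_i\delta n^{\rho})=o(\ell_n)$, so collisions contribute only a lower-order correction.
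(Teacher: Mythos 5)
The paper proves this proposition by a route that is genuinely different from yours: it uses the edge-level exploration process $\bar{S}_n$ of Algorithm~\ref{algo-expl}, the functional limit theorem $\bar{S}_n\Rightarrow S_\infty$ (in the $J_1$-topology, from \cite{DHLS16}), and an argument that the event $\{\sum_{k\in\mathcal{N}_i(\delta)}d_k\le\theta_i\delta n^\rho\}$ forces the first hitting time $h_n(\theta_i/2)$ to be at most $\beta_{\varepsilon_2,i}n^\rho$, combined with the atomlessness of the limiting hitting time and a Portmanteau/lower-semicontinuity argument. That approach handles collisions and depletion effects automatically because they are encoded in the drift and convergence of $\bar{S}_n$. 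Your approach --- a fixed offspring Galton--Watson process with martingale $M_t=Z_t/\nu_n^{t-1}$, Kolmogorov/Doob concentration, and the identity $2\alpha+\eta=1$ to get $\mathrm{Var}(M_{T_\delta})/d_i^2 = O(\delta/\theta_i)$ --- is a genuinely different, in-principle attractive route, and the scaling computation for the variance is correct.

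However, there is a real gap at the coupling step, and it is not a mere technicality. A Galton--Watson tree with i.i.d.~offspring $D_n^*-1$ drawn from the \emph{full} size-biased degree sequence is an \emph{upper} bound on the BFS exploration, not a lower bound: half-edge collisions make the BFS smaller, and as high-degree vertices (including vertex~$i$ itself) are removed from the pool, the effective offspring mean of the BFS drops. To transfer your lower bound on $\sum Z_t$ to a lower bound on $|\mathcal{N}_i(\delta)|$, you must therefore control the loss $|\{Z_t\}|-|\mathcal{N}_i(\delta)|$ from above, and the ``birthday-type'' estimate you invoke only controls the \emph{number} of collision events, not the sizes of the subtrees the BP grows beyond each collision vertex; the total loss is a sum of near-critical GW subtree sizes and needs its own tail or moment bound (the expectation is $O(\delta n^{2\eta})$, which is $o(n^\rho)$ only because $\tau<4$, and the argument via Markov's inequality should be spelled out). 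More seriously, the depletion effect is not a lower-order correction that disappears into this error term: removing vertex $i$ alone shifts the offspring mean by $\approx\theta_i^2 n^{-\eta}/\mu$, i.e.~it changes the criticality parameter $\lambda$ by an order-one amount over the $\delta n^\eta$ generations you consider. Your martingale $M_t=Z_t/\nu_n^{t-1}$ is centered at the \emph{unperturbed} $\nu_n$, so the resulting bound $Z_t\ge \tfrac12 d_i\nu_n^{t-1}$ applies to the i.i.d.~BP, not to the BFS. The correct version would have to use a (time-varying or worst-case) lower bound on the true offspring mean, e.g.~replacing $\lambda$ by $\lambda-\sum_{j\le K}\theta_j^2/\mu$ for a suitable $K$, and verify that the geometric sum still produces a constant multiple of $\theta_i\delta n^\rho$. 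This is plausibly fixable, but the coupling paragraph as written, which folds all of this into ``$o_{\sss\PR}(\theta_i\delta n^\rho)$ from collisions,'' does not justify the transfer.
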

Next, we need some control on the diameter of the graph after removing the hubs. 
Denote by $\mathcal{G}^{\sss >K}_n$ the graph obtained by removing the vertices $[K] = \{1,\dots,K\}$ having the largest degrees and the edges incident to them from $\CM$. 
Note that $\mathcal{G}^{\sss >K}_n$ is a configuration model conditionally on its degree sequence.
Let $\Delta^{\sss >K}$ denote the maximum of the diameters of the connected components of $\mathcal{G}^{\sss >K}_n$. 
The following proposition shows that, for large $K$, $\Delta^{\sss >K}$ is small  with high probability:
\begin{proposition}\label{prop:diamter-small-comp}
	Assume that {\rm Assumptions~\ref{assumption1},~\ref{assumption-extra}} and \eqref{defn:criticality} hold.  Then, for any $\varepsilon_1,\varepsilon_2 > 0$, there exists $K =K(\varepsilon_1,\varepsilon_2)$ and $n_0=n_{0}(\varepsilon_1,\varepsilon_2)$ such that for all $n\geq n_0$,
	\begin{equation}\label{eq:diamter-small-comp}
		\prob{\Delta^{\sss >K}>\varepsilon_1 n^{\eta}}\leq \frac{\varepsilon_2}{4}. 
	\end{equation}
\end{proposition}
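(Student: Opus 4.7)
My plan is to view $\mathcal{G}^{>K}_n$ as a configuration model on $[n]\setminus[K]$ conditionally on its residual degree sequence, whose criticality parameter can be made arbitrarily subcritical by choosing $K$ large, and then bound its diameter via a path-counting argument that exploits the increasingly subcritical behavior as higher-index vertices are also removed.

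A concentration argument first shows that $\mathcal{G}^{>K}_n$ is a configuration model with modified degrees $\tilde d_i = d_i - X_i$ (where $X_i$ counts $i$'s half-edges matched to $[K]$) and criticality parameter
\begin{equation*}
    \tilde\nu_n^{>K} \;=\; 1 \;-\; n^{-\eta}\Bigl(\tfrac{\beta_{K+1}^n}{\mu}-\lambda\Bigr) \;+\; \oP(n^{-\eta}),
\end{equation*}
using the identity $\eta=1-2\alpha$ and the criticality condition \eqref{defn:criticality}. Since $\bld{\theta}\notin\ell^2_{\shortarrow}$, $\beta_{K+1}^n\to\sum_{i\le K}\theta_i^2\to\infty$ as $K\to\infty$, so the subcritical margin of the residual graph grows unboundedly with $K$.

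To bound $\PR(\Delta^{>K}\ge \varepsilon_1 n^\eta)$, I would estimate the expected number of paths of length $L=\varepsilon_1 n^\eta$ in $\mathcal{G}^{>K}_n$ by classifying each path by its maximum vertex index $i^\star$. Paths with $i^\star>K_n$ (for $K_n$ as in Assumption~\ref{assumption-extra}~(ii)) live in $\mathcal{G}^{>K_n}_n$, whose subcritical margin is $\omega(\log n)\cdot n^{-\eta}$; a direct first-moment bound $\tilde\ell_n(\tilde\nu_n^{>K_n})^{L-1}$ is then $o(1)$. For paths with $i^\star\in(K,K_n]$, I would decompose at the position of $i^\star$ into two sub-paths that both lie in $\mathcal{G}^{>i^\star}_n$, subcritical by margin $\beta_{i^\star+1}^n n^{-\eta}/\mu$. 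Each contributes a factor $\lesssim\e^{-\varepsilon\beta_{i^\star+1}^n}$, so summing over positions and $i^\star$, and using $d_i^2/n^{2\alpha}\le\theta_1\cdot(d_i/n^\alpha)$, leads to a bound of the form
\begin{equation*}
    C\,\varepsilon_1\,n\sum_{i\in(K,K_n]}\Bigl(\tfrac{d_i}{n^\alpha}\Bigr)\e^{-\varepsilon\beta_i^n},
\end{equation*}
which vanishes as $K\to\infty$ (uniformly in $n$) by Assumption~\ref{assumption-extra}~(iii). The branching-process coupling of Section~\ref{sec:BP-approximation} together with the exponential tail bound on the number of edges in large components (Proposition~\ref{lem:volume-large-deviation}) makes this first-moment step rigorous by replacing the true exploration with a dominating branching process.

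The principal obstacle is the case $i^\star\in(K,K_n]$: the crude global walk count $\tilde\ell_n\tilde\nu_n^{L-1}\approx\mu n\,\e^{-c_K\varepsilon_1}$ with $c_K=\beta_{K+1}^n/\mu-\lambda$ does not vanish for fixed $K$, so the decomposition along the maximum-index vertex is essential in order to feel the much stronger subcriticality $\beta_{i^\star+1}^n n^{-\eta}/\mu$ that effectively comes from having removed all vertices of index $\le i^\star$. Assumption~\ref{assumption-extra}~(iii) is the precise summability condition that makes this bookkeeping yield a vanishing bound, and the branching-process domination is what allows one to use this estimate on paths to control neighborhood explorations when the exploration encounters a medium-range hub.
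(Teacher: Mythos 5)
Your decomposition — classify a long path in $\cG_n^{>K}$ by the smallest-index (i.e., highest-degree) vertex $i^\star$ it visits, use direct path counting for $i^\star>K_n$ where the subcritical margin is $\omega(\log n)\,n^{-\eta}$, and treat $i^\star\in(K,K_n]$ via the exploration started at $i^\star$ in $\cG_n^{>i^\star}$ — is exactly the decomposition the paper uses (there $i^\star$ is called $i_0$, and the $i^\star>K_n$ range is Lemma~\ref{lem:diam-bound-subcritical}). You wrote ``maximum vertex index,'' but only the minimum index makes the two sub-paths at $i^\star$ live in $\cG_n^{>i^\star}$; I take that as a slip.

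The genuine gap is in the range $i^\star\in(K,K_n]$: a first-moment argument is too weak there by a polynomial factor. Your path count gives a bound of the form $C\varepsilon_1\, n\sum_{i\in(K,K_n]}(d_i/n^\alpha)\e^{-\varepsilon\beta_i^n}$, and you assert this vanishes by Assumption~\ref{assumption-extra}~(iii). It does not: that assumption controls $\sum_{i\in(K,K_n)}(d_i/n^\alpha)\e^{-\varepsilon\beta_i^n}$, not $n$ times it. For $i$ near $K$ one has $\beta_i^n\to\sum_{j<K}\theta_j^2$, a finite constant for fixed $K$, so $\e^{-\varepsilon\beta_i^n}$ stays bounded away from zero and your bound is $\Theta(n)$. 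Equivalently, on the branching-process side the first moment only gives $\PR(\bar\partial_i(\varepsilon n^\eta)\neq\varnothing)\leq\E[\text{size of gen.\ }\varepsilon n^\eta]\leq d_i\e^{-\varepsilon\beta_i^n}$, which carries a $d_i\leq\theta_1 n^\alpha$ factor. The paper's Theorem~\ref{lem:boundary-small-prob} instead gives $\PR(\bar\partial_i(\varepsilon n^\eta)\neq\varnothing)\leq \tfrac{Cd_i}{n^\alpha}\e^{-\varepsilon\beta_i^n/2}$; the crucial extra $n^{-\alpha}$ comes from Proposition~\ref{prop-height-bound-one-progeny}, i.e., the Addario-Berry style estimate that the height of the tree is at most $3H_n(\sigma)=3\sum_{u<\sigma}s_n(u)^{-1}$, with $\PR(H_n(\sigma)>\varepsilon n^\eta)\leq C n^{-\alpha}$ established scale-by-scale via Esseen's concentration inequality and upcrossing bounds. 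That random-walk argument — or something equivalently sharp — is the ingredient your proposal is missing; without it, the sum over $i\in(K,K_n]$ cannot be closed. (The branching-process coupling and Proposition~\ref{lem:volume-large-deviation} that you invoke only justify replacing the exploration by the dominating BP; they do not improve the first moment.)
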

\noindent We now prove Theorem~\ref{thm:gml-bound} assuming Propositions~\ref{prop:size-nbd bound} and \ref{prop:diamter-small-comp}:
\begin{proof}[Proof of Theorem~\ref{thm:gml-bound}] 
	Fix $i\geq 1$ and $\varepsilon_1,\varepsilon_2>0$. 
	For a component $\sC \subset \CM$, we write $\Delta(\sC)$ to denote its diameter.
	Let us choose $K$ and $n_0$ so that~\eqref{eq:diamter-small-comp} holds for all $n\geq n_0$. 
	In view of Proposition~\ref{prop:size-nbd bound}, let $\delta_0 = \min\{\varepsilon_1,\delta_{1,\varepsilon_2},\dots,\delta_{K,\varepsilon_2}\}/2$, and $n_0' = \max\{n_0,n_{1,\varepsilon_2},\dots,n_{K,\varepsilon_2}\}$. 
	Thus, for all $n\geq n_0'$, \eqref{eq:size-nbd bound} is satisfied for all $i\in [K]$.
	Define 
	\begin{equation}
		F_1 := \{\Delta^{\sss >K}< \varepsilon_1 n^{\eta}/2\}, \quad F_2 := \{\Delta(\csi )>\varepsilon_1 n^{\eta}/2\}. 
	\end{equation}
	Notice that, on the event $F_1\cap F_2$, it must be the case that one of the vertices in $[K]$ belongs to~$\csi$, and that the union of the neighborhoods of $[K]$ of radius $\lceil\varepsilon_1 n^{\eta}/2\rceil+1 \approx \varepsilon_1 n^{\eta}/2$ covers~$\csi $. 
	Therefore, given any vertex $v\in \csi$, $\cN_v(\varepsilon_1)$ contains at least one of the neighborhoods~$(\cN_j(\varepsilon_1/2))_{j\in [K]}$. 
	This observation yields that
	\begin{equation}
		\inf_{v\in\csi}n^{-\rho} |\mathcal{N}_v(\varepsilon_1)|\geq \min_{j\in [K]} n^{-\rho}| \mathcal{N}_j(\varepsilon_1/2)| \geq \min_{j\in [K]} n^{-\rho}|\mathcal{N}_j(\delta_0)|.
	\end{equation} 
	Thus, for all $n\geq n_0'$,
	\begin{equation}\label{eq:f1-f2}
		\begin{split}
			&\PR\Big(F_1\cap F_2 \cap \Big\{\inf_{v\in\csi}n^{-\rho}| \mathcal{N}_v(\varepsilon_1)|  \leq  \theta_K \delta_0 \Big\}\Big)\\
			&\qquad \leq \sum_{j\in [K]} \PR\big(|\mathcal{N}_j(\delta)|\leq  \theta_j \delta_0 n^{\rho}\big) \leq \sum_{j=1}^K \frac{\varepsilon_2}{2^{j+1}}\leq \frac{\varepsilon_2}{2} ,
		\end{split}
	\end{equation}where the one-but-last step follows from Proposition~\ref{prop:size-nbd bound}.
	Further, on the event $F_2^c$, $|\mathcal{N}_v(\varepsilon_1)| = |\csi|$ for all $v\in \csi$. 
	Moreover, using \eqref{eq:comp-size-conv}, it follows that $n^{-\rho}|\csi|$ converges in distribution to a random variable with  strictly positive support.
	Using the Portmanteau theorem, the above implies that for any $\delta_0'>0$, there exists $\tilde{n}_0 = \tilde{n}_0(\varepsilon_2,\delta_0')$ such that, for all $n\geq \tilde{n}_0$, 
	\begin{equation}
		\PR\big(n^{-\rho}|\csi|\leq \delta_0'\big)\leq \frac{\varepsilon_2}{4}.
	\end{equation}
	Therefore,
	\begin{equation}\label{eq:f1-f2c}
		\PR\bigg( F_2^c\cap \bigg\{\inf_{v\in\csi}n^{-\rho}| \mathcal{N}_v(\varepsilon_1)|  \leq \delta_0' \bigg\}\bigg)\leq \frac{\varepsilon_2}{4}.
	\end{equation} 
	Now, using \eqref{eq:f1-f2} \blue{and} \eqref{eq:f1-f2c}, together with Proposition~\ref{prop:diamter-small-comp}, it follows that, for any $n\geq \max\{n_0',\tilde{n}_0\}$, and $K$ chosen as above,
	\begin{eq}
		\PR\bigg(\inf_{v\in\csi}n^{-\rho}| \mathcal{N}_v(\varepsilon_1)|  \leq \min\{\delta_0',\theta_K\delta_0\} \bigg)\leq \varepsilon_2.
	\end{eq}
	This completes the proof of Theorem~\ref{thm:gml-bound}.
\end{proof}

\section{Lower bound on the total mass of neighborhoods of hubs} 
\label{sec:total-mass-hubs}
In this section, we prove Proposition~\ref{prop:size-nbd bound}.
\begin{proof}[Proof of Proposition~\ref{prop:size-nbd bound}]
	Let us denote the component of $\CM$ containing vertex~$i$  by $\cs(i)$.
	Consider the breadth-first exploration of $\cs(i)$ starting from vertex~$i$, given by
	the following exploration algorithm \cite{DHLS16}:
	\begin{algo}[Exploring the graph]\label{algo-expl}\normalfont  
		The algorithm carries along vertices that can be alive, active, exploring and killed, and half-edges that can be alive, active or killed. 
		We sequentially explore the graph as follows:
		\begin{itemize}
			\item[(S0)] At stage $l=0$, all the vertices and the half-edges are \emph{alive}, and only the half-edges associated to vertex $i$ are \emph{active}. Also, there are no \emph{exploring} vertices except $i$. 
			\item[(S1)] At each stage $l$, \blue{if there is an exploring vertex,} take an active half-edge $e$ of an exploring vertex $v$ and pair it uniformly to another alive half-edge $f$. Kill $e,f$. If $f$ is incident to a vertex $v'$ that has not been discovered before, then declare all the half-edges incident to $v'$ (if any) active, except $f$. 
			If $\mathrm{degree}(v')=1$ (i.e. the only half-edge incident to $v'$ is $f$) then kill~$v'$. Otherwise, declare $v'$ to be active and larger than all other vertices that are alive. After killing $e$, if $v$ does not have another active half-edge, then kill $v$ also. 
			\blue{If there is no exploring vertex at the beginning of stage $l$, we pick the oldest active half-edge, declare the corresponding vertex to be exploring, and then execute the same process as above.}

			\item[(S2)] Repeat (S1) until there is no active half-edges left.
		\end{itemize}
	\end{algo}
	\noindent  Call a vertex \emph{discovered} if it is either active or killed. Let $\mathscr{V}_l$ denote the set of vertices discovered up to time $l$ and $\mathcal{I}_j^n(l):=\ind{j\in\mathscr{V}_l}$.
	Define the exploration process by
	\begin{equation}\label{def:exploration-process}
		S_n(l)= d_i+\sum_{j\neq i} d_j \mathcal{I}_j^n(l)-2l=d_i+\sum_{j\neq i} d_j \left( \mathcal{I}_j^n(l)-\frac{d_j}{\ell_n}l\right)+\bigg( \frac{1}{\ell_n} \sum_{j\neq i}d_j^2-2\bigg)l.
	\end{equation} 
	Note that the exploration process keeps track of the number of active half-edges.
	Thus, $\cs(i)$ is explored when $\bld{S}_n$ hits zero.
	Moreover, since one edge is explored at each step, the hitting time of zero is the total number of edges in $\cs(i)$.
	Define the re-scaled version $\bar{\bld{S}}_n$ of~$\bld{S}_n$ by $\bar{S}_n(t)= n^{-\alpha}S_n(\lfloor tn^{\rho} \rfloor)$. 
	Then, by Assumption~\ref{assumption1} and \eqref{defn:criticality},
	\begin{equation} \label{eqn::scaled_process}
		\bar{S}_n(t) = \theta_i-\frac{\theta_i^2 t}{\mu}+ n^{-\alpha} \sum_{j\neq i}      d_j\left(\mathcal{I}_j^n(tn^\rho)-\frac{d_j}{\ell_n}tn^{\rho} \right)+\lambda t +o(1).
	\end{equation}
	The convergence of this exploration process was considered in  \cite[Theorem 8]{DHLS16} except for the fact that the exploration process started at zero in \cite{DHLS16}. However, using identical arguments to \cite[Theorem 8]{DHLS16}, it can be shown that 
	\begin{eq}\label{eq:dist-conv-S}
		\bar{\bld{S}}_n\xrightarrow{\sss d} \bld{S}_\infty,
	\end{eq}
	with respect to the Skorohod $J_1$-topology, where
	\begin{equation}
		S_\infty(t) = \theta_i -  \frac{\theta_i^2 t}{\mu} +\sum_{j\neq i}\theta_j\Big(\mathcal{I}_j(t)- \frac{\theta_jt}{\mu}\Big)+\lambda t,
	\end{equation}with $\mathcal{I}_j(s):=\ind{\xi_j\leq s }$ and $\xi_j\sim \mathrm{Exponential}(\theta_j/\mu)$ independently of each other. 
	
	Let $h_n(u)$ (respectively $h_\infty(u)$) denote the first hitting time of $\bar{\bld{S}}_n$ (respectively $\bld{S}_\infty$) of~$u$. 
	More precisely, 
	\begin{eq}
		h_n(u):= \inf\Big\{t: \bar{S}_n(t) \leq u \text{ or }  \lim_{t'\nearrow t}\bar{S}_n( t') \leq u\Big\},
	\end{eq} and define $h_\infty(u)$ similarly by replacing $\bar{S}_n(t)$ by $\bar{S}_\infty(t)$.
	Note that, by \cite[Lemma 36]{DHLS16}, the distribution of $h_\infty(u)$ does not have any atoms and therefore, for any $\varepsilon_2>0$, there exists $\beta_{\varepsilon_2,i}>0$ such that
	\[
	\PR\big(h_{\infty}(\theta_i/2)\leq \beta_{\varepsilon_2,i}\big)\leq \frac{\varepsilon_2}{2^{i+1}}.
	\]
	Now we use the following fact:
	\begin{fact}\label{fact}
		Let $(X_n(t))_{t\geq 0}\dto (X(t))_{t\geq 0}$ in Skorohod $J_1$-topology and let $h(X_n)$ (respectively $h(X)$) denote the hitting time to zero of $X_n$ (respectively $X$). 
		Then, $\liminf_{n\to\infty} \PR(h(X_n) > a) \geq \PR(h(X) >a) $, for all $a>0$.
	\end{fact}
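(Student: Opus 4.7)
The plan is to reduce the claim to a lower semicontinuity statement for the hitting time: $\liminf_n h(X_n) \geq h(X)$ whenever $X_n \to X$ in $J_1$. Once this is in hand, the Skorohod representation theorem lets me realise $X_n \to X$ almost surely, after which Fatou's lemma applied to $\ind{h(X_n)>a}$ gives $\liminf_n \PR(h(X_n)>a) \geq \PR(h(X)>a)$.

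Fix a realisation with $h(X)>a$. By the definition of $h$, both $X(t)>0$ and $X(t^-)>0$ hold for every $t \in [0,a]$. A short sequential compactness argument, based on the facts that $X(t_n) \to X(t^*)$ when $t_n \searrow t^*$ (right-continuity) and $X(t_n) \to X(t^{*-})$ when $t_n \nearrow t^*$, with analogous statements for the left limits using right-continuity of $X$ just to the right of $t^*$, shows that $\varepsilon := \inf_{t \in [0,a]} \min(X(t), X(t^-)) > 0$. Since the jump set of $X$ in $[0, a+1]$ is at most countable, I can then choose $\delta>0$ so small that $X$ has no jumps in $(a, a+\delta]$ and $X(t) \geq \varepsilon/2$ for all $t \in [a, a+\delta]$; consequently $\min(X(t), X(t^-)) \geq \varepsilon/2$ throughout $[0, a+\delta]$.

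The $J_1$-convergence on $[0, a+\delta]$ supplies continuous strictly increasing time changes $\lambda_n$ fixing the origin with $\sup_t|\lambda_n(t)-t| \to 0$ and $\sup_t |X_n(\lambda_n(t)) - X(t)| \to 0$. Because $\lambda_n$ is continuous, $(X_n \circ \lambda_n)(t^-) = X_n(\lambda_n(t)^-)$, and the uniform convergence of c\`adl\`ag functions propagates to left limits: $\sup_t |X_n(\lambda_n(t)^-) - X(t^-)| \to 0$. For $n$ large enough that both uniform errors are at most $\varepsilon/4$ and $\sup_s|\lambda_n^{-1}(s)-s| < \delta/2$, every $s \in [0, a]$ can be written as $s = \lambda_n(t)$ with $t \in [0, a+\delta]$, giving $X_n(s) \geq X(t) - \varepsilon/4 \geq \varepsilon/4$ and $X_n(s^-) \geq X(t^-) - \varepsilon/4 \geq \varepsilon/4$. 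Thus $h(X_n) > a$ holds eventually on the Skorohod-coupled event $\{h(X) > a\}$, and Fatou's lemma then closes the argument.

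The main technical subtlety is precisely the need to extend control of $X$ from $[0, a]$ to $[0, a+\delta]$: the time change $\lambda_n$ in $J_1$ does not generally fix the endpoint $a$, so if $X$ could drop to zero immediately after $a$ one would be unable to transfer the lower bound to $X_n$. The right-continuity of $X$ together with the countability of its jump set makes a jump-avoiding choice of $\delta$ possible, and this is the only nontrivial point in the argument.
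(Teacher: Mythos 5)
Your argument is correct and, at its core, is the same as the paper's: both establish that $\{f:h(f)>a\}$ is open in the Skorohod $J_1$-topology and then convert this to the distributional statement (the paper invokes Portmanteau directly, while your Skorohod-representation-plus-Fatou step is simply a proof of that direction of Portmanteau for open sets). The paper cites \cite[Theorem 13.4.1]{W02} for the key continuity step, whereas you verify it by hand; your version has the merit of making explicit the need to pass from $[0,a]$ to $[0,a+\delta]$, which is exactly the subtlety when $a$ is a discontinuity time of $X$. One small correction in the write-up: you cannot in general pick $\delta>0$ so that $X$ has \emph{no} jumps in $(a,a+\delta]$ (jump times of a c\`adl\`ag path may accumulate at $a$ from the right); what the argument actually uses, and what does hold, is only that $a+\delta$ can be chosen to be a continuity point of $X$ (so that $J_1$-convergence on $D[0,\infty)$ gives $J_1$-convergence on $D[0,a+\delta]$), combined with right-continuity of $X$ at the single point $a$ to guarantee $\inf_{t\in[a,a+\delta]}\min(X(t),X(t^-))\geq \varepsilon/2$ for small $\delta$. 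With that rephrasing, the proof is airtight.
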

	\begin{proof}
		Let $(f_n)_{n\geq 1}$ be such that $h(f_n) \leq a$ for all $n\geq 1$ and $f_n\to f$ in the Skorohod $J_1$-topology as $n\to\infty$.
		Now, $h(f_n) \leq a$ implies that $\inf_{t\in [0,a]} f_n(t) \red{\leq}  0$. 
		Using \cite[Theorem 13.4.1]{W02}, it follows that $\inf_{t\in [0,a]} f(t) \red{\leq} 0$ and thus $h(f) \leq a$.
		Therefore, we have shown that $\{f\colon h(f)\leq a\}$ is a closed set in the Skorohod $J_1$-topology, and therefore  $\{f: h(f)> a\}$ is an open set.
		The proof follows using the Portmanteau theorem \cite[Theorem 2.1~(iv)]{Bil99}. 
	\end{proof}
	\noindent Using \eqref{eq:dist-conv-S} and Fact~\ref{fact}, there exists  $n_{i,\varepsilon_2}\geq 1$ such that, for all $n\geq n_{i,\varepsilon_2}$,
	\begin{equation}
		\PR(h_n(\theta_i/2)\leq \red{ \beta_{\varepsilon_2,i}})\leq \frac{\varepsilon_2}{2^{i}}.
	\end{equation}
	Our first goal is to show that there exists a $\delta_{i,\varepsilon}$ such that for any $\delta\in (0,\delta_{i,\varepsilon_2}]$,  
	\begin{eq}\label{degree-implies-height}
		\sum_{k\in \cN_i(\delta)}d_k \leq \theta_i\delta n^{\rho}\quad \implies \quad h_n(\theta_i/2)\leq  \red{\beta_{\varepsilon_2,i}}.
	\end{eq}
	Recall that $\mathcal{N}_v(\delta)$ denotes the $\delta n^{\eta}$ neighborhood of $v$ in $\rCM_n(\bld{d})$.
	To prove \eqref{degree-implies-height}, let $\partial(j)$ denote the set of vertices at distance $j$ from $i$. 
	Let $E_{j1}$ denote the total number of edges 
	between  vertices in $\partial(j)$ and $\partial(j-1)$, and let $E_{j2}$ denote the number of edges within the vertices in $\partial(j-1)$.
	Define $E_j = E_{j1}+E_{j2}$.
	Fix any $\delta<2\beta_{\varepsilon_2,i}/\theta_i$. 
	Note that if $\sum_{k\in\mathcal{N}_i(\delta)}d_k\leq \theta_i \delta n^{\rho}$, then the total number of edges in $\mathcal{N}_i(\delta)$ is at most $\theta_i \delta n^{\rho}/2$. 
	Thus there exists $j\leq \delta n^{\eta}$ such that $E_j\leq\theta_i\delta n^{\rho}/2\delta n^{\eta} = \theta_in^{\alpha}/2 $. 
	This implies that $\bld{S}_n$ must go below $\theta_i n^\alpha/2$ before exploring all the vertices in $\cN_i(\delta)$.
	This is because we are exploring the components in a breadth-first manner and $\bar{\bld{S}}_n$ keeps track of the number of active half-edges, which in turn are the potential connections to vertices at the next level.
	Since one edge is explored in each time step, and we rescale time by $n^{\rho}$,  this implies that 
	\begin{equation}
		h_n(\theta_i/2)\leq \frac{1}{2} n^{-\rho} \sum_{k\in\mathcal{N}_i(\delta)}d_k\leq \theta_i\delta/2 \leq \beta_{\varepsilon_2,i}.
	\end{equation}
	Therefore, for all $n\geq n_{i,\varepsilon_2}$,
	\begin{equation} \label{eq:dk-bound}
		\PR\bigg(\sum_{k\in\mathcal{N}_i(\delta)}d_k\leq \theta_i\delta n^{\rho}\bigg)\leq \PR(h_n(\theta_i/2)\leq \beta_{\varepsilon_2,i})\leq \frac{\varepsilon_2}{2^{i}}.
	\end{equation}
	Finally, to conclude Proposition~\ref{prop:size-nbd bound} from \eqref{eq:dk-bound}, we use the \blue{result from \cite[Lemma 22]{DHLS16} that, for any $T>0$,}
	\begin{equation}\label{weight-expl-prop}
		\sup_{u\leq T}\bigg| \sum_{i\in [n]}\mathcal{I}_i^n(un^{\rho})-un^{\rho}\bigg|=\oP(n^{\rho}).
	\end{equation} 
	This implies that the difference between the number of edges and the number of vertices explored up to time $un^{\rho}$ is $\oP(n^{\rho})$ uniformly over $u\leq T$.
	The proof of Proposition~\ref{prop:size-nbd bound} now follows.
\end{proof}

\section{Diameter after removing hubs}
\label{sec:diameter-after-removal}
Throughout the remainder of the paper, we fix the convention that $C,C',C''>0$ etc.~denote constants whose value can change from line to line.
Recall the definition of the graph $\cG_n^{\sss >K}$ from Proposition~\ref{prop:diamter-small-comp}. 
If we keep on exploring $\cG_n^{\sss >K}$ in a breadth-first manner using Algorithm~\ref{algo-expl} and ignore the cycles created, then we get a random tree. 
The idea is to couple neighborhoods in $\cG_n^{\sss >K}$ with a suitable branching process such that the progeny distribution of the branching process dominates the number of children of each vertex in the breadth-first tree.
Therefore, when there is a long path in $\cG_n^{\sss >K}$ that makes the diameter large, that long path must be present in the branching process as well under the above coupling.
In this way, the question about the diameter of $\cG_n^{\sss >K}$ reduces to the question about the height of a branching process. 
To estimate the height suitably, we use a recent beautiful proof technique by Addario-Berry~\cite{A17} which allows one to relate the height of a branching process to the sum of inverses of the associated breadth-first random walk.

In Section~\ref{sec:asymp-edges}, we establish \blue{tail bounds} for the number of edges within components.
This allows us to formulate the desired coupling in Section~\ref{sec:BP-approximation}. 
In Section~\ref{sec:height-vs-rw}, we analyze the breadth-first random walk to show that it is unlikely that the height of the branching process is larger than $\varepsilon n^{\eta}$. 
These bounds are different from those derived in~\cite{A17} since our branching process depends on $n$ and there is a joint scaling involved between the distances and the law of the branching process.

\subsection{Asymptotics for the number of edges} \label{sec:asymp-edges}
For a graph $G$, let $\rE(G)$ denote the number of edges in $G$.
\begin{proposition}\label{lem:volume-large-deviation} Suppose that {\rm Assumption~\ref{assumption1}} and \eqref{defn:criticality} hold. 
	\blue{For all $\varepsilon \in (0,\frac{4-\tau}{\tau-1})$}, and  sufficiently large~$n$,
	\begin{eq}
		\PR(\rE(\sC (i))> n^{\rho+\varepsilon}) \leq C\e^{-C' n^{\varepsilon/2}},
	\end{eq}for some absolute constants $C,C'>0$ and all $i\in [n]$.  
\end{proposition}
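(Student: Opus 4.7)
The plan is to analyze the breadth-first exploration $\bld{S}_n$ of Algorithm~\ref{algo-expl} started at vertex~$i$; since each step pairs exactly one edge, $\rE(\sC(i))$ equals the first hitting time $T$ of $0$ by $\bld{S}_n$ from $d_i$, so it suffices to bound $\PR(S_n(\lfloor n^{\rho+\varepsilon}\rfloor) > 0)$ uniformly in $i \in [n]$. Decomposing $S_n(l) = d_i + M_l + D_l$ with $M_l$ a martingale and $D_l$ the predictable drift, a one-step computation gives
\[
\E[\Delta S_n(l)\mid \mathcal{F}_{l-1}] \;=\; -2 + \frac{\sum_{j \text{ undisc.}} d_j^2}{\ell_n - 2l + 1} \;=\; \lambda n^{-\eta} - \frac{Q_{l-1}}{\ell_n} + o(n^{-\eta}),
\]
where $Q_l := \sum_{j \in \mathscr{V}_l} d_j^2$ is the discovered squared-degree mass. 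The drift becomes strongly negative once enough such mass is absorbed into $\mathscr V_l$.

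\textbf{Step 1 (hub discovery).} Since $\bld{\theta} \notin \ell^2_{\shortarrow}$, I would first choose $K$ so that $\sum_{j \leq K} \theta_j^2/\mu \geq |\lambda| + 2$. Writing $l_0 := \lfloor n^{\rho + \varepsilon/2}\rfloor$, the uniform-matching description of $\CM$ implies that for any hub $j \in [K]$ (with $d_j \sim \theta_j n^\alpha$), the conditional probability that none of its half-edges is picked as the uniform partner during any of the first $l_0$ pairings is at most $(1 - d_j/\ell_n)^{l_0} \leq \exp(-c\theta_j n^{\varepsilon/2})$, using $d_j l_0/\ell_n = \Theta(n^{\varepsilon/2})$. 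A union bound then yields
\[
\PR\bigl(\{T > n^{\rho+\varepsilon}\} \cap \{\exists\, j \in [K]\text{ undisc.\ at } l_0\}\bigr) \leq K\e^{-c n^{\varepsilon/2}}.
\]
On the complementary event, $Q_{l-1}/\ell_n \geq (|\lambda|+2) n^{-\eta}(1+o(1))$ for every $l > l_0$, so the drift is at most $-n^{-\eta}$, and, using $\rho - \eta = \alpha$, the cumulative drift on $(l_0, n^{\rho+\varepsilon}]$ is at most $-\tfrac{1}{3} n^{\alpha + \varepsilon}$.

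\textbf{Step 2 (martingale concentration).} To control $M_l$, I would invoke Freedman's inequality. Jumps satisfy $|\Delta M_l| \leq d_1 = O(n^\alpha)$, and the per-step conditional variance is bounded by $\sum_{j \text{ undisc.}} d_j^3/\ell_n$. Once $[K]$ is discovered, \eqref{eqn:670} gives $\sum_{j > K} d_j^3/\ell_n \leq \varepsilon_K n^{3\alpha - 1}$ with $\varepsilon_K \downarrow 0$ as $K \to \infty$. Using $\rho + 3\alpha - 1 = 2\alpha$, the total conditional variance over $n^{\rho+\varepsilon}$ steps is at most $\varepsilon_K n^{2\alpha + \varepsilon}$; the jump-times-target term $d_1 \cdot n^{\alpha+\varepsilon}$ is of the same order, so Freedman gives $\PR(|M_{\lfloor n^{\rho+\varepsilon}\rfloor}| \geq \tfrac{1}{4} n^{\alpha + \varepsilon}) \leq \e^{-c n^\varepsilon}$. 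Intersecting with Step~1 and using $d_i \leq d_1 = O(n^\alpha)$, we would obtain $S_n(\lfloor n^{\rho+\varepsilon}\rfloor) \leq d_i - \tfrac{1}{3} n^{\alpha + \varepsilon} + \tfrac{1}{4} n^{\alpha+\varepsilon} < 0$ on an event of probability at least $1 - K\e^{-c n^{\varepsilon/2}} - \e^{-c n^\varepsilon}$, contradicting $T > n^{\rho+\varepsilon}$ and yielding the claim.

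The main obstacle is Step~2: individual jumps as large as $\Theta(n^\alpha)$ (caused by discovery of high-degree vertices) rule out Azuma--Hoeffding, so Freedman's inequality together with the tail-variance control from \eqref{eqn:670} is essential. The final rate $\e^{-C' n^{\varepsilon/2}}$ in the proposition is not driven by this concentration step (which delivers $\e^{-c n^\varepsilon}$), but by the hub-discovery estimate of Step~1, which is strictly slower and therefore dominates.
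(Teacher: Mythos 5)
Your proposal matches the paper's argument in all essentials: a Doob--Meyer decomposition of the exploration process, a martingale bound via Freedman's inequality on the scale $n^{\alpha+\varepsilon}$, and a drift bound driven by early discovery of high-degree vertices, with the $\e^{-Cn^{\varepsilon/2}}$ rate coming from the hub-discovery estimate exactly as you say. One small overstatement: the tail-variance control from \eqref{eqn:670} is not actually needed for the martingale step (and it is not what the paper uses) --- the crude bound $\sum_{j\in[n]} d_j^3 / (\ell_n - 2t_n) = O(n^{3\alpha-1})$, valid at \emph{every} step, already gives total quadratic variation $O(n^{2\alpha+\varepsilon})$ and hence the $\e^{-cn^\varepsilon}$ Freedman rate; invoking $\varepsilon_K \downarrow 0$ only sharpens constants and also entangles your martingale bound with the hub-discovery event, which the cruder estimate avoids.
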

The proof of Proposition~\ref{lem:volume-large-deviation} relies on concentration techniques for martingales. 
We start by defining the relevant notation.
Consider exploring $\CM$ with Algorithm~\ref{algo-expl}, and let the associated exploration process be defined in \eqref{def:exploration-process}. 
Let us denote  the degree of the vertex found at step $l$ by $d_{\sss (l)}$. 
If no new vertex is found at step $l$, then $d_{\sss (l)} = 0$.
Also, let $\mathscr{F}_l$ denote the sigma-algebra containing all the information revealed by the exploration process up to time $l$.
Thus, 
\begin{eq}
	S_n(0) = d_i, \quad \text{and}\quad  S_n(l) = S_n(l-1) + (d_{\sss (l)}-2).
\end{eq}
Using the Doob-Meyer decomposition, one can write 
\begin{equation}
	S_n(l) = S_n(0)+M_n(l) + A_n(l), 
\end{equation}where $M_n$ is a martingale with respect to $(\mathscr{F}_l)_{l\geq 1}$. 
The drift $A_n$ and the quadratic variation $\langle M_n \rangle$ of $M_n$ are given by 
\begin{equation}
	A_{n}(l)= \sum_{j=1}^{l} \mathbbm{E}\big[d_{\sss(j)}-2 \vert \mathscr{F}_{j-1} \big], \quad 
	\langle M_n \rangle(l)= \sum_{j=1}^{l} \var{d_{\sss(j)}\vert \mathscr{F}_{j-1}} .
\end{equation}
We will show that for any $\varepsilon \in (0,\varepsilon_0)$, the following two lemmas hold:
\begin{lemma}\label{lem:small-martingale}
	Suppose that {\rm Assumption~\ref{assumption1}} and \eqref{defn:criticality} hold.  \blue{For all $\varepsilon \in (0,\frac{4-\tau}{\tau-1})$},  and  sufficiently large~$n$, 
	\begin{eq}
		\PR(n^{-(\alpha+\varepsilon)} M_n(n^{\rho +\varepsilon}) > 1) \leq C\e^{-C' n^{\varepsilon}},
	\end{eq}
	for some absolute constants $C,C'>0$.
\end{lemma}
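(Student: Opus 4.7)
The plan is to apply Freedman's concentration inequality, which is driven by the predictable quadratic variation rather than by the square of the range. This distinction is essential here: the range of $M_n$ is of order $d_1=\Theta(n^\alpha)$, so on the time scale $n^{\rho+\varepsilon}$, Azuma--Hoeffding only yields a bound of the form $\exp(-\Omega(n^{2\varepsilon-\rho}))$, which for $\rho\in(1/2,2/3)$ is vacuous for small $\varepsilon$.

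To control the increment range, observe that the martingale increment at step $j$ equals $d_{\sss(j)}-\E[d_{\sss(j)}\,|\,\mathscr{F}_{j-1}]$, and since $d_{\sss(j)}\in\{0\}\cup\{d_v:v\notin\mathscr{V}_{j-1}\}$ with $d_v\leq d_1$, this is a.s.\ bounded by $R:=2d_1=O(n^\alpha)$ via \eqref{defn::degree}. For the predictable quadratic variation, the half-edge pairing makes $d_{\sss(j)}$ a size-biased sample among undiscovered vertices: conditionally on $\mathscr{F}_{j-1}$, the partner half-edge lands on undiscovered vertex $v$ with probability $d_v/(\ell_{j-1}-1)$, so
\begin{equation*}
\var(d_{\sss(j)}\,|\,\mathscr{F}_{j-1})\;\leq\;\E[d_{\sss(j)}^2\,|\,\mathscr{F}_{j-1}]\;=\;\sum_{v\notin\mathscr{V}_{j-1}}\frac{d_v^3}{\ell_{j-1}-1}\;\leq\;\frac{\sum_{v\in[n]}d_v^3}{\ell_n-2n^{\rho+\varepsilon}}.
\end{equation*}
Combining \eqref{defn::degree} for a fixed number of top-degree vertices with the tail bound \eqref{eqn:670} gives $\sum_{v\in[n]}d_v^3=O(n^{3\alpha})$. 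The choice $\varepsilon_0=(4-\tau)/(\tau-1)$ yields $\rho+\varepsilon_0=2\alpha<1$ (since $\tau>3$), so $\ell_n-2n^{\rho+\varepsilon}=\Theta(n)$ for $\varepsilon\in(0,\varepsilon_0)$. Summing over $j\leq n^{\rho+\varepsilon}$ and using the algebraic identity $\rho+3\alpha-1=2\alpha$, we obtain the purely \emph{deterministic} bound $\langle M_n\rangle(n^{\rho+\varepsilon})\leq\sigma^2:=Cn^{2\alpha+\varepsilon}$.

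Finally, Freedman's inequality applied with $x=n^{\alpha+\varepsilon}$, $R=O(n^\alpha)$, and $\sigma^2=O(n^{2\alpha+\varepsilon})$ gives
\begin{equation*}
\PR\big(M_n(n^{\rho+\varepsilon})>n^{\alpha+\varepsilon}\big)\;\leq\;\exp\!\left(-\frac{x^2}{2(\sigma^2+Rx)}\right)\;=\;\exp\!\left(-\Omega(n^\varepsilon)\right),
\end{equation*}
since both $Rx$ and $\sigma^2$ are of order $n^{2\alpha+\varepsilon}$, while $x^2=n^{2\alpha+2\varepsilon}$. This is exactly the claim. The only genuinely delicate step is the size-biased calculation bounding $\E[d_{\sss(j)}^2\,|\,\mathscr{F}_{j-1}]$ by a \emph{deterministic} constant multiple of $n^{3\alpha-1}$; once that is in place, the identity $\rho+3\alpha-1=2\alpha$ is precisely what makes the Freedman exponent a positive power of $n$, and the restriction $\varepsilon<\varepsilon_0=2\alpha-\rho$ is exactly what keeps $n^{\rho+\varepsilon}=o(n)$ throughout the exploration window, ensuring $\ell_{j-1}=\Theta(n)$ uniformly.
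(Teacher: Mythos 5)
Your proof is correct and follows essentially the same route as the paper: bound the predictable quadratic variation by the deterministic quantity $C n^{2\alpha+\varepsilon}$ via the size-biased moment $\E[d_{\sss(j)}^2\mid\mathscr{F}_{j-1}]\leq \sum_v d_v^3/(\ell_n-2t_n)=O(n^{3\alpha-1})$, bound the increment range by $O(n^\alpha)$, and apply Freedman's inequality with $x=n^{\alpha+\varepsilon}$. The only difference is that you make explicit the motivating comparison with Azuma--Hoeffding and the algebraic identities $\rho+3\alpha-1=2\alpha$ and $\rho+\varepsilon_0=2\alpha<1$, which the paper leaves implicit.
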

\begin{lemma}\label{lem:drift-superlinear}
	Suppose that {\rm Assumption~\ref{assumption1}} and \eqref{defn:criticality} hold.
	For all fixed $K\geq 1$, \blue{$\varepsilon \in (0,\frac{4-\tau}{\tau-1})$}, and sufficiently large~$n$,
	\begin{eq}
		\PR\bigg(n^{-(\alpha+\varepsilon)} A_n(n^{\rho +\varepsilon}) \geq -C\sum_{i=1}^K\theta_i^2\bigg) \leq C\e^{-C' n^{\varepsilon/2}},
	\end{eq}for some absolute constants $C,C'>0$.
\end{lemma}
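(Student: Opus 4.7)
My plan is to exhibit a decomposition of $A_n(l)$ with $l:=n^{\rho+\varepsilon}$ whose dominant contribution becomes strongly negative once the top-$K$ hubs have been reached by the exploration, and then to show that those hubs are indeed reached by time $l/2$ with stretched-exponential probability.

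\emph{Step 1 (Closed form for the drift).} At step $j$, the probability of pairing the current active half-edge with one of the $d_v$ half-edges of an undiscovered $v$ is $d_v/(L_{j-1}-1)$. Together with $L_{j-1}=\ell_n-2(j-1)$ and $\sum_v d_v^2=\ell_n(\nu_n+1)$, this gives
\[
\E[d_{\sss(j)}-2\mid\mathscr{F}_{j-1}]=\frac{\ell_n(\nu_n-1)+4(j-1)+2-\sum_v d_v^2\1\{T_v\le j-1\}}{L_{j-1}-1},
\]
where $T_v$ is the step at which $v$ is first discovered. Summing over $j\le l$ and swapping the double sum, $\sum_j\sum_v d_v^2\1\{T_v\le j-1\}=\sum_v d_v^2(l-T_v)_+$, the criticality~\eqref{defn:criticality} and the choice $\varepsilon<\varepsilon_0=\alpha-\eta$ force the two deterministic pieces to be $O(n^{\alpha+\varepsilon})$, yielding
\[
A_n(l)\le C_1\,n^{\alpha+\varepsilon}-\tfrac{1+o(1)}{\ell_n}\sum_{v=1}^K d_v^2(l-T_v)_+,
\]
for a constant $C_1=C_1(\lambda,\mu)$.

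\emph{Step 2 (Effect of hub discovery).} Let $E_K:=\{T_v\le l/2\text{ for all }v\in[K]\}$. On $E_K$ each factor satisfies $(l-T_v)_+\ge l/2$, and using $d_v=\theta_v n^{\alpha}(1+o(1))$, $\ell_n=\mu n(1+o(1))$ gives
\[
\tfrac{1}{\ell_n}\sum_{v=1}^K d_v^2(l-T_v)_+\ge \tfrac{l}{2\ell_n}\sum_{v=1}^K d_v^2\ge \tfrac{1}{2\mu}\,n^{\alpha+\varepsilon}\sum_{v=1}^K\theta_v^2(1+o(1)),
\]
so that Step 1 furnishes $A_n(l)\le -C\,n^{\alpha+\varepsilon}\sum_{v=1}^K\theta_v^2$ on $E_K$ for a suitable $C>0$ and all large $n$.

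\emph{Step 3 (Probability of $E_K^c$).} At every step before $v$ is discovered, the pairing hits one of $v$'s half-edges with conditional probability $d_v/(L_{j-1}-1)\ge d_v/\ell_n$; iterating,
\[
\PR(T_v>l/2)\le \prod_{j=1}^{l/2}\bigl(1-d_v/\ell_n\bigr)\le \exp\bigl(-ld_v/(2\ell_n)\bigr)=\exp\bigl(-\tfrac{\theta_v}{2\mu}\,n^{\varepsilon}(1+o(1))\bigr),
\]
which is at most $Ce^{-C'n^{\varepsilon/2}}$ for any fixed $v\le K$. Union-bounding over $v\in[K]$ gives $\PR(E_K^c)\le Ce^{-C'n^{\varepsilon/2}}$, and combining with Step 2 completes the proof.

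\emph{Main obstacle.} The formula of Step 1 is valid only while Algorithm~\ref{algo-expl} is actually running, i.e., up to the termination time $T$ of $\cs(i)$. I plan to handle this by extending Algorithm~\ref{algo-expl} across components so that $L_j=\ell_n-2j$ holds deterministically for all $j\le\ell_n/2$ and $A_n,M_n$ remain defined up to time $l$. On the event $\{T\le l/2\}$ the post-termination increments $\E[d_{\sss(j)}-2\mid\mathscr{F}_{j-1}]=-2$ already push $A_n(l)$ below $-l+O(n^\alpha)$, which is far stronger than the claimed bound, while on $\{T>l/2\}$ the computation of Steps 1--2 applies verbatim.
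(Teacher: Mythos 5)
Your proposal follows the same route as the paper's proof: express the drift increment in closed form, sum to isolate the negative contribution $\frac{1}{\ell_n}\sum_v d_v^2\,(l-T_v)_+$, and then argue that the hubs are discovered by time $l/2$ with stretched-exponential probability, so that this negative term dominates. The correspondence is very close: your identity in Step~1 is an algebraic rewriting of the paper's display \eqref{the-split-up}--\eqref{upper-bound-drift-large}, and your Steps~2--3 play the role of the paper's event $\cA_n$ and the bound \eqref{eq:estimate-bad-event-exponential}. There are two mild differences worth noting. First, you only track the top $K$ hubs (event $E_K$), whereas the paper takes $\cA_n$ to involve \emph{all} vertices of degree exceeding $n^{\alpha-\varepsilon/2}$; your version is tighter, and gives the stronger exponent $e^{-\Theta(n^{\varepsilon})}$ rather than $e^{-\Theta(n^{\varepsilon/2})}$, though with constants depending on $K$ through $\theta_K$ (the paper's $\cA_n$ has $K$-independent constants, which matches the statement of ``absolute constants'' more literally). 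Second, your ``Main obstacle'' paragraph addresses a subtlety the paper glosses over: the exploration of $\cs(i)$ may terminate before time $l$, and then both the drift formula and the hub-discovery estimate need care. You are right to flag this. However, your proposed resolution is internally inconsistent: if you \emph{extend} Algorithm~\ref{algo-expl} across components so that $L_j=\ell_n-2j$ keeps decreasing (which is what makes the bound on $\PR(T_v>l/2)$ valid when $v\notin\cs(i)$), then the post-termination increments are given by the same formula as in Step~1 and are \emph{not} identically $-2$. The value $-2$ arises only in the alternative convention where $d_{\sss(j)}:=0$ after termination — but under that convention $T_v=\infty$ for $v\notin\cs(i)$ and the product bound in Step~3 fails. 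Either convention can be made to work, but you should commit to one: (a) extend across components, apply Step~1 verbatim for all $j\le l$, and observe that Step~3 is then legitimate; or (b) freeze after termination, handle $\{T\le l/2\}$ via the $-2$ drift, and on $\{T>l/2\}$ apply Steps~1--3 with the sum in Step~2 taken only up to time $T$ and the residual $-2(l-T)$ contributing the rest. As currently written, the paragraph mixes the two. Apart from this presentational wrinkle, your argument is sound and recovers the paper's proof.
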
  
\begin{proof}[Proof of Proposition~\ref{lem:volume-large-deviation} subject to Lemmas~\ref{lem:small-martingale},~\ref{lem:drift-superlinear}] 
	Throughout, we write $t_n := n^{\rho +\varepsilon}$.
	Note that we can choose $K\geq 1$ such that $\sum_{i=1}^K\theta_i^2$ is arbitrarily large since  $\bld{\theta}\notin \ell^2_{\shortarrow}$.
	Thus, if $n^{-(\alpha+\varepsilon)} M_n(t_n) \leq 1$ and $n^{-(\alpha+\varepsilon)}A_n(t_n) \leq -C\sum_{i=1}^K\theta_i^2$, then $n^{-(\alpha + \varepsilon)}S_n(t_n) <0$, and therefore $\sC(i)$ must be explored before time $t_n$, and thus  $\rE(\sC(i))\leq t_n$.
	As a result, Lemmas~\ref{lem:small-martingale} and~\ref{lem:drift-superlinear} together complete the proof of Proposition~\ref{lem:volume-large-deviation}.
\end{proof}
\begin{proof}[Proof of Lemma~\ref{lem:small-martingale}]
	First note that $\blue{\frac{4-\tau}{\tau-1} +\rho = 2\alpha} < 1$ and therefore $t_n = o(n)$.
	Thus, uniformly over $j\leq t_n$, 
	\begin{equation}
		\var{d_{\sss (j)} \vert \mathscr{F}_{j-1}} \leq \E[d_{\sss (j)}^2 \vert \mathscr{F}_{j-1}]  = \frac{\sum_{j\notin \mathscr{V}_{j-1}} d_j^3}{\ell_n - 2j +2} \leq \frac{\sum_{j\in [n]} d_j^3}{\ell_n - 2t_n+2}\leq Cn^{3\alpha - 1},
	\end{equation}so that, almost surely,
	\begin{equation}\label{eq:bound-QV}
		\langle M_n\rangle (t_n) \leq t_n Cn^{3\alpha-1} = C n^{2\alpha + \varepsilon}.
	\end{equation}
	Also, $d_{\sss (j)} \leq C n^{\alpha}$ almost surely. We can now use Freedman's inequality \cite[Proposition 2.1]{Fre75} which says that if $Y(k) = \sum_{j\leq k} X_j$ with $\E[X_j\vert \mathcal{F}_{j-1}] =0$ (for some filtration $(\mathcal{F}_j)_{j\geq 1}$) and $\PR(|X_j|\leq R, \ \forall j\geq 1)=1$, then, for any $a,b >0$,
	\begin{eq}\label{eq:freedman}
		\PR(Y(k) \geq a, \text{ and } \langle Y\rangle(k)  \leq b ) \leq \exp\bigg(\frac{-a^2}{2(Ra+b)}\bigg).
	\end{eq}
	We apply \eqref{eq:freedman} with $a= n^{\alpha+\varepsilon}$, $b=Cn^{2\alpha+\varepsilon}$ and $R=Cn^{\alpha}$. 
	Note that $\langle M_n \rangle (t_n) \leq b$ \blue{almost surely} by \eqref{eq:bound-QV}.
	It follows that 
	\begin{eq}
		\PR(M_n(t_n) > n^{\alpha + \varepsilon}) \leq \exp \bigg(- \frac{n^{2\alpha + 2\varepsilon}}{2 \blue{C} (n^{\alpha} n^{\alpha+ \varepsilon} + n^{2\alpha+ \varepsilon})}\bigg) \leq C\e^{-C'n^{\varepsilon}},
	\end{eq} 
	and the proof follows.
\end{proof}

\begin{proof}[Proof of Lemma~\ref{lem:drift-superlinear}]
	Note that 
	\begin{eq}\label{the-split-up}
		&\mathbbm{E} \big[ d_{\sss(i)} -2 \vert \mathscr{F}_{i-1} \big]  = \frac{\sum_{j \notin \mathscr{V}_{i-1}} d_{j}^2}{\ell_n-2i+1}-2\\
		&\hspace{2cm}= \frac{1}{\ell_n}\sum_{j \in [n]} d_{j}(d_{j}-2)- \frac{1}{\ell_n} \sum_{j \in \mathscr{V}_{i-1}} d_{j}^2 + \frac{(2i-1)\sum_{j \notin \mathscr{V}_{i-1}} d_{j}^2 }{\ell_n (\ell_n-2i+1)} \\
		&\hspace{2cm}\leq  \lambda n^{-\eta} - \frac{1}{\ell_n} \sum_{j \in \mathscr{V}_{i-1}} d_{j}^2 + \frac{(2i-1)}{(\ell_n-2i+1)^2} \sum_{j \in [n]} d_{j}^{2} +o(n^{-\eta})
	\end{eq}uniformly over $i\leq t_n$.
	Therefore, for all sufficiently large $n$, 
	\begin{eq}\label{upper-bound-drift-large}
		A_n(t_n)  = \sum_{j=1}^{t_n} \mathbbm{E}\big[d_{\sss(j)}-2 \vert \mathscr{F}_{j-1} \big]&\leq \lambda t_n n^{-\eta} - \frac{1}{\ell_n}\sum_{i=1}^{t_n} \sum_{j \in \mathscr{V}_{i-1}} d_{j}^{2} + \frac{Ct_n^2}{\ell_n}+ o(n^{\alpha+\varepsilon}) \\
		&= \lambda n^{\alpha+\varepsilon} - \frac{1}{\ell_n}\sum_{i=1}^{t_n} \sum_{j \in \mathscr{V}_{i-1}} d_{j}^{2} + o(n^{\alpha+\varepsilon}),
	\end{eq}where in the second step we have used $\sum_{i\in [n]} d_i^2 / \ell_n = O(1)$, and in the last step we have used that $t_n^2/\ell_n = O(n^{2\rho+2\varepsilon-1}) = o(n^{\alpha+\varepsilon})$ for $\varepsilon<  1+ \alpha - 2\rho= \frac{4-\tau}{\tau-1} $.
	Let us denote the second term in \eqref{upper-bound-drift-large} by (A).
	To analyze~(A), define the event 
	\begin{eq}
		\cA_n:= \big\{\exists j: d_j>n^{\alpha - \varepsilon/2}, j\notin \mathscr{V}_{t_n/2}\big\}.
	\end{eq}
	Then, for all sufficiently large $n$,
	\begin{eq}\label{eq:estimate-bad-event-exponential}
		\PR(\cA_n) \leq \sum_{j: d_j>n^{\alpha-\varepsilon/2}} \bigg(1-\frac{d_j}{\ell_n-2t_n}\bigg)^{t_n} \leq n \e^{-Cn^{\varepsilon/2}}.
	\end{eq}
	On the event $\cA_n^c$, 
	\begin{eq}\label{eq:expression-A}
		\mathrm{(A)} = \frac{1}{\ell_n} \sum_{i=1}^{t_n} \sum_{j\in [n]}d_j^2\mathbbm{1}\{j\in \mathscr{V}_{i-1}\} \geq \frac{1}{\ell_n} \sum_{i = \frac{t_n}{2}+1}^{t_n} \sum_{j=1}^K d_j^2  \geq Cn^{\alpha+\varepsilon} \sum_{j=1}^K \theta_j^2.  
	\end{eq}
	Combining \eqref{upper-bound-drift-large}, \eqref{eq:estimate-bad-event-exponential} and \eqref{eq:expression-A}  now completes the proof.
\end{proof}

\subsection{Coupling with branching processes} \label{sec:BP-approximation}
Recall that $\mathscr{C}(i)$ is the connected component in $\CM$ containing vertex $i$. Define the event $\cK_n:= \{\rE(\cs(i))>n^{\rho+\varepsilon}\}$. 
Proposition~\ref{lem:volume-large-deviation} implies that the probability of $\cK_n$ happening is exponentially small in $n$. 
On the event~$\mathcal{K}_n^c$, we can couple the breadth-first exploration starting from vertex $i$ with a suitable branching process. 
Let $n_k$ denote the number of vertices of degree $k$ and consider the branching process $\mathcal{X}_n(i)$ starting with $d_i$ individuals, and the progeny distribution $\bar{\xi}_n$ given by 
\begin{equation}\label{upperbounding-BP}
	\begin{split}\prob{\bar{\xi}_n=k}=\bar{p}_k=
		\begin{cases} 
			\frac{(k+1)n_{k+1}}{\ubar{\ell}_n} \quad &\text{for } k\geq 1,\\
			\frac{n_1-2n^{\rho+\varepsilon}}{\ubar{\ell}_n} \quad &\text{for } k=0,
		\end{cases}
	\end{split}
\end{equation}where $\ubar{\ell}_n=\ell_n-2n^{\rho+\varepsilon}$.
Note that, at each step of the exploration, we have at most $(k+1)n_{k+1}$ half-edges that are incident to vertices having $k$ further unpaired half-edges. 
Further, on the event $\mathcal{K}_n^c$, we have at least $\ubar{\ell}_n$ choices for pairing. 
Therefore, the number of active half-edges discovered at each step in the breadth-first exploration of the neighborhoods of $i$ is stochastically dominated by  $\bar{\xi}_n$.
This proves the next proposition, which we state after setting up some further notation. 
Recall that $\cG_n^{\sss >i-1}$ denotes the graph obtained by deleting vertices in $[i-1]$ and the associated edges from $\CM$.
Let $\partial_i(r)$ denote the number of vertices at distance $r$ from vertex $i$ in the graph $\cG_n^{\sss >i-1}$. 
Let $\bar{\xi}_n(i)$ denote the random variable with the distribution in \eqref{upperbounding-BP} truncated in such a way that $\{d_1,\dots,d_{i-1}\}$ are excluded from the support.
More precisely,
\begin{eq}\label{eqn:668}
	\PR(\bar{\xi}_n(i)=k) =  
	\begin{cases}
		0\quad &\text{for }  k> d_{i}, \\
		\frac{(k+1)}{L}\#\{j\geq i:\ d_j=k+1\} \quad &\text{for } 1\leq k\leq d_{i},\\
		\frac{n_1-2n^{\rho+\varepsilon}}{L} \quad &\text{for } k=0,
	\end{cases}
\end{eq}where $L=\underline{\ell}_n-\sum_{j=1}^{i-1}d_j$ is the appropriate normalizing constant.
Let $\cX_{n,\mathrm{res}}(i)$ denote the branching process starting with~$d_i$ individuals and progeny distribution $\bar{\xi}_n(i)$ and let $\bar{\partial}_i(r)$ denote the number of individuals in generation $r$ of $\mathcal{X}_{n,\mathrm{res}}(i)$.
Then the above stochastic domination argument immediately yields the next proposition:

\begin{proposition}\label{prop:coupling-uppperbound}
	Suppose that {\rm Assumption~\ref{assumption1}} and \eqref{defn:criticality} hold. 
	Let $K_n$ be as described \blue{in {\rm Lemma~\ref{lem:technical}} below}.
	For all $r\geq 1$, $1\leq i\leq K_n$, \blue{$\varepsilon \in (0,\frac{4-\tau}{\tau-1})$} and $n\geq 1$, 
	\begin{eq}
		\PR(\partial_i(r)\neq \varnothing) \leq \PR(\bar{\partial}_i(r)\neq \varnothing)+ \PR(\rE(\sC(i))>n^{\rho+\varepsilon}).
	\end{eq}
\end{proposition}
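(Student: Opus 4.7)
The plan is to introduce the bad event $\cK_n := \{\rE(\sC(i)) > n^{\rho+\varepsilon}\}$, whose probability is precisely the error term on the right-hand side by Proposition~\ref{lem:volume-large-deviation}. Conditional on $\cK_n^c$, I construct a coupling between the breadth-first exploration of $\cG_n^{\sss >i-1}$ started at vertex $i$ (using Algorithm~\ref{algo-expl}) and the branching process $\cX_{n,\mathrm{res}}(i)$ under which the former is dominated by the latter in the sense that $\partial_i(r) \neq \varnothing$ on $\cK_n^c$ forces $\bar{\partial}_i(r) \neq \varnothing$. The stated bound then follows by splitting $\PR(\partial_i(r) \neq \varnothing)$ over $\cK_n$ and $\cK_n^c$.

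The heart of the argument is a uniform one-step stochastic domination. On $\cK_n^c$ the exploration pairs at most $n^{\rho+\varepsilon}$ half-edges, so at every intermediate step the number of alive half-edges attached to vertices in $\{i, i+1, \ldots, n\}$ is bounded below by $L = \underline{\ell}_n - \sum_{j<i} d_j$. Moreover, since $d_1 \geq \cdots \geq d_n$, every vertex reachable from $i$ in $\cG_n^{\sss >i-1}$ has degree at most $d_i$, matching the support of $\bar{\xi}_n(i)$. Let $Y_s$ denote the number of further active half-edges produced at the $s$-th pairing step (equal to $\deg(v')-1$ if the pairing reveals a previously undiscovered $v' \geq i$, and $0$ otherwise). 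Then, conditional on the history $\mathscr{F}_{s-1}$, for each $k \in \{1, \ldots, d_i - 1\}$,
\[
\PR\big(Y_s = k \,\big\vert\, \mathscr{F}_{s-1}\big) \;\leq\; \frac{(k+1)\,\#\{j\geq i : d_j = k+1\}}{L} \;=\; \PR(\bar{\xi}_n(i) = k),
\]
since the numerator over-counts the eligible undiscovered half-edges and the denominator is at least $L$ on $\cK_n^c$; the residual mass is absorbed into the $k=0$ case, where the definition of $\bar{\xi}_n(i)$ in \eqref{eqn:668} assigns generously larger probability. An inverse-CDF argument with an independent uniform $U_s$ yields an independent copy $Z_s \sim \bar{\xi}_n(i)$ such that $Y_s \leq Z_s$ almost surely on $\cK_n^c$.

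To lift these one-step couplings into a coupling of trees, I run the BFS and the breadth-first traversal of $\cX_{n,\mathrm{res}}(i)$ in lockstep: the $d_i$ half-edges of $i$ are matched to the $d_i$ individuals at generation $0$, and at each subsequent step the next half-edge dequeued in the BFS is matched to the next BP individual to reproduce, with progeny set to $Z_s$. Because both traversals are breadth-first, this matching preserves generations, and the pointwise inequalities $Y_s \leq Z_s$ propagate to give the desired generation-wise domination of the BFS tree by the BP tree. Combining this with the bound on $\PR(\cK_n)$ completes the proof.

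The main obstacle is the careful bookkeeping needed to make the matching precise and to ensure that the $Z_s$'s can be produced as i.i.d.\ $\bar{\xi}_n(i)$ variables compatible with the coupling; this amounts to verifying the one-step bound uniformly over histories on $\cK_n^c$, which follows from the configuration-model pairing rule together with the $L$-lower bound on the denominator. The remaining probabilistic content, the size-biased domination of the pairing distribution by the inflated residual distribution $\bar{\xi}_n(i)$, is standard in configuration-model exploration arguments.
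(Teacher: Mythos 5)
Your proposal matches the paper's argument: decompose on the event $\cK_n = \{\rE(\sC(i)) > n^{\rho+\varepsilon}\}$, establish one-step stochastic domination of the breadth-first progeny by $\bar{\xi}_n(i)$ (via the same numerator bound $(k+1)\,\#\{j\geq i:d_j=k+1\}$ and the $L$-lower bound on the pool of half-edges), and propagate this to a generation-wise coupling of the BFS tree with $\cX_{n,\mathrm{res}}(i)$. The paper states the domination more tersely, while you make the inverse-CDF coupling and the lockstep traversal explicit, but the route is the same.
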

\noindent Before proceeding with the next section in which we investigate $\PR(\bar{\partial}_i(r)\neq \varnothing)$, we estimate the expectation and variance of the progeny distribution in the branching process $\mathcal{X}_{\sss n,\mathrm{res}}(i)$ using Assumptions~\ref{assumption1},~\ref{assumption-extra}, and \eqref{defn:criticality}. 
Using $\sum_i \theta_i^2 = \infty$, we can choose $i_2(\lambda)$ (depending only on $\lambda$) such that 
\begin{align}\label{eqn:666}
	\frac{1}{\mu}\sum_{i=1}^{i_2(\lambda)}\theta_i^2\geq 5\lambda.
\end{align}
Also the normalizing constant in \eqref{eqn:668} satisfies 
\blue{
	\begin{eq}\label{eq:L-asymp}
		L = \ell_n(1+o(n^{-\eta}))
	\end{eq}
	uniformly over $1\leq i\leq K_n$.
	To see this, first observe that  $\ubar{\ell}_n=\ell_n-2n^{\rho+\varepsilon} = o(n^{-\eta})$ since $\varepsilon < 1-\rho -\eta = \frac{4-\tau}{\tau-1}$. 
	Also, $\frac{1}{\ell_n}\sum_{j\leq i} d_j = O(d_1K_nn^{-1})= o(n^{2\alpha-1})=o(n^{-\eta})$, \blue{as $K_n = o(n^{\alpha})$} by Assumption~\ref{assumption-extra} \blue{and Lemma~\ref{lem:technical}} and $2\alpha -1 = -\eta$.
	Hence, \eqref{eq:L-asymp} follows.}
Now using Assumption~\ref{assumption1} \blue{and \eqref{defn:criticality}}, note that there exists $N_\lambda\geq 1$  such that for all $n\geq N_\lambda$ and $i_2(\lambda)\leq i\leq K_n$,
\begin{eq} \label{eq:computation-mean-BP}
	\expt{\bar{\xi}_n(i)}&=\frac{1}{L}\sum_{j\geq  i} d_j(d_j-1)= \frac{1}{\ell_n}\sum_{j \geq  i} d_j(d_j-1) +o(n^{-\eta})\\
	&=1+\lambda n^{-\eta}-\frac{1}{\ell_n}\sum_{j<i}d_j(d_j-1)+o(n^{-\eta})\\
	&
	\leq
	1+\lambda n^{-\eta}-\frac{1}{2\ell_n}\sum_{j<i}d_j^2+o(n^{-\eta})\\
	&\leq 1 - \bigg(Cn^{-2\alpha}\sum_{j< i} d_j^2\bigg)n^{-\eta}  +o(n^{-\eta }),
\end{eq}
where the third step uses \eqref{defn:criticality}, the penultimate step uses the fact that $d_i\geq 2$ so that $\sum_{j<i} d_j \leq \sum_{j<i}d_j^2 /2$ for $i\leq K_n$, and the last step uses \eqref{eqn:666}.
Thus, for $n\geq N_\lambda$ and $i_2(\lambda)\leq i\leq K_n$,
\begin{eq} \label{eq:BP-expt}
	\expt{\bar{\xi}_n(i)}\leq 1 - \beta_i^n n^{-\eta} \quad \text{where}\quad \beta_i^n=Cn^{-2\alpha}\sum_{j< i}d_j^2.
\end{eq} 
The estimate in \eqref{eq:BP-expt}  will be crucial in the next section.

\subsection{Estimating heights of trees via random walks}
\label{sec:height-vs-rw}
We will prove the following theorem in this section:
\begin{theorem}\label{lem:boundary-small-prob} 
	Suppose that {\rm Assumptions~\ref{assumption1}, \ref{assumption-extra}}, and \eqref{defn:criticality} hold. Fix $\varepsilon>0$. 
	Then, for all $i_2(\lambda)\leq i \leq K_n$ (where $i_2(\lambda)$ \blue{and $K_n$ are} given by \eqref{eqn:666} \blue{and {\rm Lemma~\ref{lem:technical}} respectively}) and $n\geq N_\lambda$,
	\begin{equation}
		\PR(\bar{\partial}_i( \varepsilon n^\eta)\neq \varnothing)\leq \frac{Cd_i}{n^{\alpha}} 
		\e^{- \frac{\varepsilon \beta_i^n}{2}},
	\end{equation}for some constant $C = C(\varepsilon,\lambda)>0$.
\end{theorem}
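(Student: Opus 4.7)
The plan is a two-scale decomposition of the height $r := \varepsilon n^\eta$: a subcritical Markov bound on the first half extracts the drift factor $e^{-\varepsilon\beta_i^n/2}$, while an Addario-Berry~\cite{A17}-style critical heavy-tail survival bound on the second half produces the polynomial prefactor $C/n^\alpha$. Since the $d_i$ initial particles of $\mathcal{X}_{n,\mathrm{res}}(i)$ spawn i.i.d.\ subtrees, a union bound reduces the problem to proving
\begin{equation*}
q_r \;\leq\; \frac{C}{n^\alpha}\,e^{-\varepsilon\beta_i^n/2},
\end{equation*}
where $q_r$ denotes the probability that a single-particle GW tree with offspring law $\bar{\xi}_n(i)$ reaches generation $r$.

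Decomposing at $r_0 := \lfloor r/2\rfloor$ via the branching Markov property and a union bound over the subtrees rooted at generation $r_0$ gives $q_r \leq \E[|G_{r_0}|]\cdot q_{r-r_0} = m^{r_0}\,q_{r-r_0}$, where $m = \E[\bar{\xi}_n(i)]$ and $G_r$ denotes the $r$-th generation of the single-particle tree. The inequality $m \leq 1 - \beta_i^n n^{-\eta}$ from \eqref{eq:BP-expt}, combined with $(1-x)^k \leq e^{-kx}$, then yields $m^{r_0} \leq e^{-\varepsilon\beta_i^n/2}$, accounting for the drift factor. Note that the iteration $q_r \leq m^{r_0}q_{r-r_0}$ by itself only recovers the Markov bound $q_r\leq m^r$; the gain comes from coupling it with a sharper, drift-independent estimate on $q_{r-r_0}$.

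The core remaining task -- and the main technical obstacle -- is to establish the \emph{dimensionless} bound $q_{r-r_0} \leq C/n^\alpha$, uniformly in $i \in [i_2(\lambda),K_n]$ and in $\beta_i^n$. The lower-tail assumption $\PR(\bar{\xi}_n(i)\geq \ell)\geq c_0/\ell^{\tau-2}$ from Assumption~\ref{assumption-extra}(i) places $\bar{\xi}_n(i)$ in the domain of attraction of a $(\tau-2)$-stable distribution; heuristically $q_k \lesssim k^{-1/(\tau-3)}$, which with $k = \Theta(n^\eta)$ is precisely of order $n^{-\alpha}$ by the identity $\eta = (\tau-3)\alpha$. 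Following Addario-Berry~\cite{A17}, the plan is to bound the height of the single-particle tree by a constant multiple of $\sum_{k<T} 1/W_k$, where $(W_k)_{k\geq 0}$ is the breadth-first walk of the tree and $T$ is its extinction time, and then use the lower-tail assumption to force $W_k$ to reach scale $k^{1/(\tau-2)}$ quickly enough that $\sum_{k<T} 1/W_k < (r-r_0)/C$ with probability $\geq 1-C/n^\alpha$. Since $\bar{\xi}_n(i)$ has infinite variance, standard Chernoff/exponential-martingale estimates fail on this sum; instead one must truncate the offspring at scale $\sim n^\alpha$, control the truncated walk via second moments, and handle rare very large jumps by a direct union bound, with all constants chosen uniformly in $i$.
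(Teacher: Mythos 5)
Your proposal follows the paper's proof essentially verbatim: the decomposition of the height at $r/2$ via the branching Markov property, the mean bound $m^{r_0}\leq \e^{-\varepsilon\beta_i^n/2}$ that extracts the drift factor, the factor $d_i$ from the initial particles, and the reduction of the residual ``dimensionless'' bound $q_{r/2}\leq C/n^\alpha$ to the Addario-Berry height-vs.-$\sum_k 1/W_k$ relation all match the paper's Theorem~\ref{lem:boundary-small-prob}, Proposition~\ref{prop-height-bound-one-progeny}, and Proposition~\ref{prop:RW-hitting-estimate}. The paper additionally invokes the stochastic domination $\bar\xi_n(i)\preceq\bar\xi_n(i_2(\lambda))$ to collapse the uniformity in $i$ to a single offspring law, and executes the anti-concentration estimate by applying Esseen's concentration-function bound at each dyadic scale (together with upcrossing counts and a negative-binomial tail) rather than by a single truncation at $n^\alpha$ with second-moment control, but these are implementation choices within the same plan you sketch.
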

Define $\cX_n^1 (i)$ to be the Galton-Watson tree starting with one offspring and progeny distribution $\bar{\xi}_n(i)$ and let $\bar{\partial}_i^1(r)$ denote the number of individuals in generation $r$ of $\mathcal{X}_{n}^1(i)$.
The crucial ingredient for the proof of Theorem~\ref{lem:boundary-small-prob} is the following:
\begin{proposition}\label{prop-height-bound-one-progeny}
	Under identical conditions as in {\rm Theorem~\ref{lem:boundary-small-prob}}, for all $n\geq N_\lambda$,
	\begin{eq}
		\PR(\bar{\partial}_{i_2(\lambda)}^1(\varepsilon n^{\eta}) \neq \varnothing) \leq \frac{C}{n^{\alpha}},
	\end{eq}for some constant $C = C(\varepsilon,\lambda)>0$.
\end{proposition}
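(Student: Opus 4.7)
Denote by $g$ the probability generating function of $\bar{\xi}_n(i_2(\lambda))$ and set $u_r := \PR(\bar{\partial}_{i_2(\lambda)}^1(r) \neq \varnothing)$, so that $u_0 = 1$ and $u_{r+1} = 1 - g(1-u_r)$ by a standard generating function identity. The sequence $(u_r)$ is non-increasing by subcriticality. Writing $v_r := 1/u_r$, my plan is to derive a discrete differential inequality
\begin{eq}
v_{r+1} - v_r \geq c\, v_r^{4-\tau}
\end{eq}
valid as long as $v_r \leq c\, n^\alpha$, and to deduce $v_{\varepsilon n^\eta} \gtrsim n^\alpha$ by comparison with the ODE $V' = cV^{4-\tau}$, whose explicit solution $V(r) \propto r^{1/(\tau-3)}$ reaches $n^\alpha$ at $r \asymp n^\eta$, via the algebraic identity $\eta/(\tau-3) = 1/(\tau-1) = \alpha$.

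The first ingredient is a quadratic Taylor lower bound for $g$. Since any probability generating function has non-negative derivatives on $[0,1]$, $g''$ is non-decreasing there, and Taylor's theorem with integral remainder gives $g(1-u) \geq 1 - mu + \tfrac{u^2}{2}\, g''(1-u)$. Substituting into the recursion and dividing by $u_r u_{r+1} \leq u_r^2$ yields $v_{r+1} - v_r \geq \tfrac{1}{2}\, g''(1-u_r)$. The second ingredient is the pointwise lower bound $g''(1-u) \gtrsim u^{-(4-\tau)}$ valid for $u \geq 1/(cn^\alpha)$. To obtain it, I would restrict the sum $g''(1-u) = \E[\bar{\xi}_n(\bar{\xi}_n-1)(1-u)^{\bar{\xi}_n-2}]$ to indices $k \leq 1/u$, where $(1-u)^{k-2}$ stays above a positive constant, and then apply Abel summation combined with the tail bound $\PR(\bar{\xi}_n(i_2(\lambda)) \geq k) \gtrsim k^{-(\tau-2)}$ coming from Assumption~\ref{assumption-extra}(i) (after checking that excluding the top $i_2(\lambda)-1$ vertices contributes only an $O(n^{\alpha-1})$ correction, negligible whenever $k \lesssim n^\alpha$).

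With the difference inequality in hand, I would integrate it by introducing $w_r := v_r^{\tau-3}$, whose exponent lies in $(0,1)$ since $\tau \in (3,4)$. The mean value theorem applied to $x \mapsto x^{\tau-3}$ together with the inequality yields $w_{r+1} - w_r \geq c(\tau-3)(v_r/v_{r+1})^{4-\tau}$; once $v_r$ has exceeded a universal constant threshold (achieved in $O(1)$ steps because $v_{r+1} - v_r \geq c$ while $v_r \asymp 1$), the ratio $v_{r+1}/v_r$ remains bounded, so $w_{r+1} - w_r \gtrsim 1$ uniformly, and summing gives $v_r \gtrsim r^{1/(\tau-3)}$ throughout the regime $v_r \leq c n^\alpha$. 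At $r = \varepsilon n^\eta$, either the iterate has already exited this regime (so that $u_r \leq 1/(c n^\alpha)$ by monotonicity) or $v_r \gtrsim (\varepsilon n^\eta)^{1/(\tau-3)} = \varepsilon^{1/(\tau-3)} n^\alpha$; in both cases $u_{\varepsilon n^\eta} \leq C(\varepsilon, \lambda)/n^\alpha$, as claimed.

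The principal obstacle I foresee is the pointwise lower bound on $g''(1-u)$: converting Assumption~\ref{assumption-extra}(i) into a bound that is sharp uniformly for $u \in [1/(cn^\alpha),\, 1]$ requires careful bookkeeping of the interplay between the effective cutoff $k \sim 1/u$ introduced by $(1-u)^{k-2}$ and the genuine truncation $k \leq d_{i_2(\lambda)} \asymp n^\alpha$ enforced by the definition~\eqref{eqn:668}. The discrete-to-continuous comparison, once the polynomial differential inequality is in place, follows a standard pattern.
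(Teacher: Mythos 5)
Your approach is genuinely different from the paper's. The paper bounds $\PR(\bar{\partial}^1_{i_2(\lambda)}(\varepsilon n^\eta)\neq\varnothing)$ by first invoking Addario-Berry's observation \cite[Proposition 1.7]{A17} that the height of a tree encoded by a breadth-first walk $s_n$ is at most $3H_n(\sigma)=3\sum_{u<\sigma}1/s_n(u)$, and then controls $H_n(\sigma)$ by a dyadic decomposition into scales $I_l=[2^{l-1},2^{l+1})$, bounding the time spent at each scale via Esseen's concentration-function inequality and an upcrossing estimate. You instead work directly with the extinction recursion $u_{r+1}=1-g(1-u_r)$, convert it into the difference inequality $v_{r+1}-v_r\geq\tfrac12 g''(1-u_r)$ for $v_r=1/u_r$ using the monotonicity of $g''$, and integrate by the substitution $w_r=v_r^{\tau-3}$. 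This is more elementary and self-contained (no need for \cite{A17} or Esseen) and, combined with the identity $\eta/(\tau-3)=\alpha$, produces the claimed $C(\varepsilon,\lambda)/n^\alpha$ cleanly. The Taylor/convexity step and the integration step are correct; the bounded-ratio argument for $v_{r+1}/v_r$ does work because $1-g(1-u)\geq u(1-p_0)$ with $p_0$ bounded away from $1$.

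There is, however, a genuine gap at the same point where the paper's own Lemma~\ref{lem:r-nl-ub} is delicate: the pointwise lower bound $g''(1-u)\gtrsim u^{-(4-\tau)}$. You want $\sum_{2\leq k\leq M}k(k-1)p_k\gtrsim M^{4-\tau}$ with $M=1/u$ from the tail lower bound $T_k:=\PR(\bar\xi_n(i_2(\lambda))\geq k)\geq c_0 k^{-(\tau-2)}$. But a lower bound on $T_k$ alone does not yield a lower bound on the truncated second moment: summation by parts gives
\begin{equation*}
\sum_{k=2}^{M}k(k-1)p_k \;=\; 2T_2+\sum_{k=3}^{M}2(k-1)T_k \;-\; M(M-1)T_{M+1},
\end{equation*}
and without an upper bound on $T_{M+1}$ the boundary term can swamp the main term (indeed, $T_{M+1}\leq \E[\bar\xi_n]/M$ only gives $M(M-1)T_{M+1}\lesssim M\gg M^{4-\tau}$). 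Concretely, if the degree sequence has a large ``gap'' --- no vertices of degree in $(l_1,l_2)$ with $l_2$ as large as $Cl_1^{\tau-2}$, which is not ruled out by Assumptions~\ref{assumption1} and~\ref{assumption-extra} --- then for $1/u\in(l_1,l_2]$ the truncated sum equals its value at $M=l_1$, namely $\asymp l_1^{4-\tau}$, which can be much smaller than $u^{-(4-\tau)}\asymp l_2^{4-\tau}$ since $(\tau-2)(4-\tau)>4-\tau$ when $\tau>3$. The soft cutoff $(1-u)^{k-2}$ mitigates this only at the lower endpoint of the gap, not near the upper endpoint. Note that the paper's own proof of Lemma~\ref{lem:r-nl-ub} states the inequality $\E[Y^2\1\{Y\leq u\}]\geq\sum_{1\leq x\leq u}x\PR(Y\geq x)$ ``for any $Y$ supported on $\N$'', which is false as written (take $Y\equiv u+1$); the correct bound carries a boundary term $-\tfrac{u(u+1)}{2}\PR(Y>u)$. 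So your argument inherits, rather than creates, this difficulty; but it is still a real hole in the proposal. To repair it one would need either a matching upper tail bound on $\bar\xi_n(i_2(\lambda))$ (so $T_{M+1}\lesssim M^{-(\tau-2)}$), or a multiscale argument that tolerates occasional scales where the $u^{-(4-\tau)}$ lower bound on $g''$ fails, and the proposal does not supply either.
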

\begin{proof}[Proof of Theorem~\ref{lem:boundary-small-prob} using Proposition~\ref{prop-height-bound-one-progeny}]
	Let $M_r$ denote the number of children at generation $r$ of $\cX_{n,\mathrm{res}}(i)$,
	and note that
	\begin{eq}
		\PR(\bar{\partial}_i( \varepsilon n^\eta)\neq \varnothing) \leq \E[M_{\varepsilon n^{\eta}/2}] \times  \PR(\bar{\partial}_i^1(\varepsilon n^{\eta}/2) \neq \varnothing).
	\end{eq}
	Now, using \eqref{eq:BP-expt},
	\begin{eq}
		\E[M_{\varepsilon n^{\eta}/2}] \leq  d_i (1-\beta_i^n n^{-\eta})^{\varepsilon n^{\eta}/2} \leq d_i \e^{- \frac{\varepsilon \beta_i^n}{2}},
	\end{eq}
	and $\bar{\xi}_n(i)\preceq \bar{\xi}_n(i-1) \preceq \dots \preceq \bar{\xi}_n(i_2(\lambda))$, where $\preceq$ denotes stochastic domination.
	Thus, 
	\begin{eq}
		\PR(\bar{\partial}_i( \varepsilon n^\eta)\neq \varnothing) \leq d_i \e^{- \frac{\varepsilon \beta_i^n}{2}}\times \PR(\bar{\partial}_{i_2(\lambda)}^1(\varepsilon n^{\eta}/2) \neq \varnothing),
	\end{eq}
	and the proof of Theorem~\ref{lem:boundary-small-prob} follows using Proposition~\ref{prop-height-bound-one-progeny}.
\end{proof}
The rest of this section is devoted to the proof of Proposition~\ref{prop-height-bound-one-progeny}.
We leverage some key ideas from~\cite{A17}.
Define the breadth-first random walk $\bld{s}_n$ by $s_n(0) = 1$ and 
\begin{equation}\label{eq:random-walk-tree}
	s_n(u) = s_n(u-1) + \zeta_u -1, 
\end{equation}
where $(\zeta_u)_{u\geq 1}$ are i.i.d.~observations from the distribution of $\bar{\xi}_n(i_2(\lambda))$. 
Let $\sigma := \inf\{u:s_n(u) = 0\}$ and for $t=0, 1,\ldots, \sigma$, define the function 
\begin{equation}
	H_n(t) := \red{\sum_{u=0}^{t-1}}\frac{1}{s_n(u)}\, .
\end{equation}
A remarkable fact observed in \cite[Proposition 1.7]{A17} states that the height of a tree with breadth-first exploration process $\bld{s}_n$ is at most $3H_n(\sigma)$. 
Thus Proposition~\ref{prop-height-bound-one-progeny} can be concluded directly from the following estimate:
\begin{proposition}\label{prop:RW-hitting-estimate}
	Under identical conditions as in {\rm Theorem~\ref{lem:boundary-small-prob}}, for all $n\geq N_\lambda$,
	\begin{eq}\label{eqn:2}
		\PR(H_n(\sigma) > \varepsilon n^{\eta}) \leq \frac{C}{n^{\alpha}},
	\end{eq}
	for some constant $C = C(\varepsilon,\lambda)>0$.
\end{proposition}

In what follows, we fix $\delta>0$ such that $\delta n^{\alpha} +2 < d_{i_2(\lambda)}/100$ for all $n\geq N_{\lambda}$.
Define $I_l:=[2^{l-1} ,2^{l+1} )$ for $l\geq 1$.
Let $\PR_x$ denote the law of the random walk $\bld{s}_n$, starting from $x$ and satisfying the recurrence relation in \eqref{eq:random-walk-tree}. 
Let $\sigma_{nl} : = \min\{t\geq 1: s_n(t) \notin I_l\}$ and
$r_{nl}: = \min\{t\geq 1:\sup_{x\in I_l}\PR_x(\sigma_{nl}>t)\leq 1/2\}$.  
We first obtain the following bound on $r_{nl}$:
\begin{lemma}\label{lem:r-nl-ub}
	Under identical conditions as in {\rm Theorem~\ref{lem:boundary-small-prob}}, there exists $n_{\star}\geq 1$ depending only on $(d_i\, ;\, i\in [n], n\geq 1)$ such that for all $n\geq n_{\star}$ and all $l\geq 1$ satisfying $2^{l+1}\leq\delta n^{\alpha}$, we have 
	$r_{nl} \leq C 2^{(\tau-2)l}$ for some (sufficiently large) constant $C>0$.
\end{lemma}
\begin{proof}
	\blue{By \eqref{eqn:668},  $\PR(\bar{\xi}_n(i_2(\lambda))= j ) = (1+o(1)) \PR(D_n^* = j+1)$ uniformly over $1\leq j\leq d_{i_2(\lambda)}$. 
		Thus, by Assumption~\ref{assumption-extra}, 
		\begin{eq}\label{eq:lower-bound-tail-xi}
			\PR\Big(\frac{u}{c_1}<\bar{\xi}_n(i_2(\lambda)) \leq u \Big) \geq Cu^{- (\tau-2)},
		\end{eq}for all $c_1\leq u\leq \delta n^\alpha$.} 
	Next, in order to estimate $\sigma_{nl}$, we bound $\sup_{x\in I_l} \PR_x (s_n(t) \in I_l) $ using an upper bound on L\'evy's concentration function due to Esseen~\cite{Ess86}, that we describe now.
	For a random variable $Z$, define L\'evy's concentration function 
	\begin{eq}
		Q(Z,L) := \sup_{x\in \R} \PR(Z\in [x,x+L)).
	\end{eq}
	By \cite[Theorem~3.1]{Ess86}, for any $u>0$,
	\begin{eq}\label{eq:esseen-inequality}
		Q(s_n(t), u ) \leq \frac{Cu}{\big(t \times  \E[|\zeta_1-\zeta_2|^2 \ind{|\zeta_1-\zeta_2| \leq u}]\big)^{1/2}}\, ,
	\end{eq}
	where $\zeta_1$ and $\zeta_2$ are i.i.d.~realizations from the distribution of $\bar{\xi}_n(i_2(\lambda))$.
	\red{To get an upper bound on the right side of} \eqref{eq:esseen-inequality}, we first observe that for any random variable $Y $ supported on $\bZ_{\geq 0}$, 
	\begin{eq}
		\E[Y^2 \1\{Y\leq u\}] &= \sum_{1\leq y\leq u} y^2 \PR(Y = y) =  \sum_{1\leq y\leq u} \sum_{1\leq x\leq y}y \PR(Y = y) \\
		&= \sum_{1\leq x\leq u} \sum_{ \blue{x\leq y\leq u}} y \PR(Y = y) \geq \sum_{1\leq x\leq u} x \PR(\blue{x\leq Y\leq u}).
	\end{eq}

	Now, it follows from \eqref{eqn:668} and Assumption~\ref{assumption1}~(iii) that $\liminf_{n\to\infty}\PR(\bar{\xi}_n(i_2(\lambda)) =0)>\nobreak0$.
	Similarly, using \eqref{eqn:668} and Assumption~\ref{assumption1}~(ii), we can choose an integer $j_{\star}>c_1$ such that 
	$\liminf_{n\to\infty}\PR\big(\bar{\xi}_n(i_2(\lambda))-1 =j_{\star}\big)>0$.
	Let $n_{\star}$ be such that 
	\begin{align}\label{eqn:11}
		\inf_{n\geq n_{\star}}\PR\big(\bar{\xi}_n(i_2(\lambda)) =0\big)>0\ \text{ and }\ 
		\inf_{n\geq n_{\star}}\PR\big(\bar{\xi}_n(i_2(\lambda)) -1 =j_{\star}\big)>0\, .
	\end{align}
	Then for any $n\geq n_{\star}$ and $c_1\leq u\leq \delta n^{\alpha}$, 
	\begin{eq}
		&\E[|\zeta_1 - \zeta_2|^2\ind{|\zeta_1-\zeta_2| \leq u}] \geq \sum_{1\leq x\leq u} x\PR(\blue{x\leq |\zeta_1 - \zeta_2| \leq u}) \\
		&\hskip30pt 
		\geq \sum_{1\leq x\leq u/c_1} x\PR(\blue{x\leq \zeta_1\leq u})\PR(\zeta_2 =0) 
		\geq C u^{4-\tau},
	\end{eq}
	where \blue{the penultimate step uses the fact that if $ x\leq \zeta_1 \leq u$ and $\zeta_2 =0$, then $x\leq |\zeta_1 - \zeta_2| \leq u$}, and the final step follows using \eqref{eq:lower-bound-tail-xi} and the first inequality in \eqref{eqn:11}.
	Thus, \eqref{eq:esseen-inequality} yields, for $n\geq n_{\star}$ and any $l\geq 1$ satisfying $c_1\leq 2^{l+1}\leq\delta n^{\alpha}$,
	\begin{eq}
		\sup_{x\in I_l} \PR_x (\sigma_{nl} >t) 
		\leq 
		\sup_{x\in I_l} \PR_x (s_n(t) \in I_l) 
		\leq 
		Q(s_n(t),2^{l+1}) 
		\leq 
		\frac{C2^l}{(t2^{l(4-\tau)})^{\blue{1/2}}},
	\end{eq}
	which is at most $1/2$ by choosing $t = C 2^{l(\tau-2)}$ for some large constant $C>0$.

	Finally, for all $n\geq n_{\star}$ and $l\geq 1$ satisfying $2^{l+1}< c_1$,
	\[
	\sup_{x\in I_l} \PR_x (\sigma_{nl} >t)
	\leq
	\PR\big(\bar{\xi}_n(i_2(\lambda)) -1 \neq j_{\star}\big)^t
	\leq 
	\exp(-Ct)\, ,
	\]
	where the last step uses the second inequality in \eqref{eqn:11}.
	This in particular implies that $r_{nl}\leq C$ 
	for all $n\geq n_{\star}$ and $l\geq 1$ satisfying $2^{l+1}< c_1$.
	This completes the proof.
\end{proof}

We now decompose the possible values of the random walk~\eqref{eq:random-walk-tree} \blue{starting from $s_n(0) =1$} into different scales.
Recall that $I_l:=[2^{l-1} ,2^{l+1} )$.
At each time $t$, the scale of $s_n(t)$, denoted by $\scl(s_n(t))$, is an integer. 
Let $\scl(s_n(0)) =1$.
Suppose that $\scl(s_n(u)) = l$ for some $u>0$.
A change of scale occurs when $\bld{s}_n$ leaves $I_l$,
i.e., at time $T:= \inf\{t>u: s_n(t)\notin I_l\}$, and the new scale is given by $\scl(s_n(T)) = l'$, where $l'$ is such that $s_n(T)\in [2^{l'-1},2^{l'})$. 
Now, the next change of scale occurs at time $T':= \inf\{t>T: s_n(t)\notin I_{l'}\}$, and the scale remains the same until $T'$, i.e., $\scl(s_n(t)) = l'$ for all $T\leq t<T'$.
Define 
\begin{eq}
	H_{nl}(t):= \sum_{u \in [0,t), \ \scl(s_n(u))=l}\frac{1}{s_n(u)}, \quad \text{so that} \quad H_n(t) = \sum_{l\geq 1} H_{nl}(t).
\end{eq} 
\noindent Let $T_{nl}(t):= \#\{u\in [0,t): \scl(s_n(u))=l\}$, and note that 
\begin{eq}
	2^{l-1} H_{nl}(t)\leq T_{nl} (t)\leq 2^{l+1} H_{nl}(t).
\end{eq}
Therefore, for any $x>0$, 
\begin{equation}\label{eq:T-H-relation}
	\PR\Big(H_{nl}(\sigma)\geq \frac{xr_{nl}}{2^{l-1}}\Big)\leq \PR(T_{nl}(\sigma)\geq xr_{nl}).
\end{equation} 
The next lemma estimates $ \PR(T_{nl}(\sigma)\geq xr_{nl})$:

\begin{lemma}\label{lem:time-spent-l-ub} 
	For all $n\geq 1$, $l\geq 1$, and $x>0$,
	\begin{equation}
		\PR(T_{nl}(\sigma)\geq x r_{nl})\leq C2^{-l -C' x },
	\end{equation}for some absolute constants $C, C'>0$.
\end{lemma}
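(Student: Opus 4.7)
The plan is to decompose $T_{nl}(\sigma) = \sum_{i=1}^{N_l} D_i$, where $N_l$ counts the distinct excursions at scale $l$ (maximal time intervals during which $\scl(s_n(\cdot)) = l$) and $D_i$ are their durations. By the very definition of $r_{nl}$ combined with the strong Markov property applied at times $r_{nl}, 2r_{nl}, \dots$, each excursion duration satisfies $\PR(D_i \geq j r_{nl}) \leq 2^{-j}$; this is the easy half and will eventually produce the $2^{-C'x}$ factor. The harder part is to extract the $2^{-l}$ factor by controlling $N_l$.

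To bound $N_l$, I will exploit that $s_n$ is a nonnegative supermartingale with drift $\leq -\beta_{i_2(\lambda)}^n n^{-\eta}$ by \eqref{eq:BP-expt}, and that every downward jump of $s_n$ equals exactly $-1$ (since $\zeta_u \geq 0$). Doob's maximal inequality, applied with $s_n(0) = 1$, immediately yields $\PR(\sup_t s_n(t) \geq 2^{l+1}) \leq 2^{-(l+1)}$, which is the source of the $2^{-l}$ factor. Moreover, between two successive visits to $\{s_n \geq 2^{l+1}\}$, the walk must pass through some position $\leq 2^{l-1}$ (it hits $0$ a.s.\ by supermartingale convergence), and by the $-1$ jump constraint such a position is at most $2^{l-1}$. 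From any such point, Doob's inequality bounds the probability of again reaching $2^{l+1}$ by $1/4$. Iterating this dichotomy via the strong Markov property gives $\PR(U_l \geq k) \leq C \cdot 2^{-l} \cdot 4^{-k}$, where $U_l$ is the number of upcrossings of $[2^{l-1}, 2^{l+1}]$.

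Given these two ingredients, the Lemma follows by a Chernoff-type argument. For $\theta < \log 2$, the exponential moment $\E[\exp(\theta D_1/r_{nl})]$ is bounded uniformly, so, after controlling $N_l \leq C(U_l + 1)$ and conditioning on $N_l = k$,
\begin{equation*}
\PR(T_{nl}(\sigma) \geq xr_{nl}) \leq \sum_{k \geq 1} \PR(N_l \geq k) \cdot e^{-\theta x}\bigl(\E[e^{\theta D_1/r_{nl}}]\bigr)^k.
\end{equation*}
For $\theta$ suitably small, the geometric factor $4^{-k}$ in $\PR(N_l \geq k)$ dominates the growth $(\E[e^{\theta D/r}])^k$, the sum converges, and the estimate collapses to the desired $C \cdot 2^{-l - C'x}$.

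The principal technical obstacle is that $N_l$ is not a priori bounded by $U_l$: the walk can oscillate in and out of $I_l$ through the lower boundary $2^{l-1}$ many times without ever completing a full upcrossing of $[2^{l-1}, 2^{l+1}]$, since upward jumps can be large. To surmount this, I plan to exploit the supermartingale drift and the $-1$ downward jump structure to argue that each ``short'' excursion from below also costs a unit jump across $2^{l-1}$, so that bookkeeping these small oscillations together with the Doob upcrossings still produces an $O(1)$ multiplicative control of $N_l$ by $U_l$. This accounting, which I expect to be the most delicate step, is what ultimately enables the exponential-in-$x$ tail to combine cleanly with the maximal inequality's exponential-in-$l$ bound.
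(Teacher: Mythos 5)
Your high-level decomposition matches the paper's exactly: write $T_{nl}(\sigma)$ as a sum of per-excursion durations, each stochastically dominated by $r_{nl}\cdot\mathrm{Geom}(1/2)$ via the definition of $r_{nl}$ and the strong Markov property, bound the number of excursions via upcrossings, and extract a $2^{-l}$ factor from the supermartingale hitting estimate. Your recognition that the crux is to control $N_l$ (the number of excursions at scale $l$) is also correct. However, the proposed resolution does not close that gap, and one intermediate claim is simply false.

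First, the claim ``between two successive visits to $\{s_n \geq 2^{l+1}\}$, the walk must pass through some position $\leq 2^{l-1}$'' is wrong: the walk can drop from $2^{l+1}$ to $2^l$ (by $-1$ steps) and jump back above $2^{l+1}$ without ever touching $2^{l-1}$. So the stated derivation of the tail of $U_l$ is not valid as written (though the conclusion $\PR(U_l\geq k)\leq C2^{-l}4^{-(k-1)}$ is true by the usual optional-stopping argument: the first reach of $\geq 2^{l+1}$ from $s_n(0)=1$ costs $2^{-(l+1)}$, each subsequent upcrossing from $2^{l-1}-1$ costs at most $(2^{l-1}-1)/2^{l+1}<1/4$). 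Second, and more seriously, the plan ``each short excursion from below also costs a unit jump across $2^{l-1}$'' does not produce any probability penalty: upward jumps are unbounded and a single crossing of the level $2^{l-1}$ is not a rare event, so counting ``unit jumps'' cannot yield an $O(1)$ multiplicative control of $N_l$ by $U_l$. The missing structural observation, which the paper exploits, is that because downward steps equal exactly $-1$, the scale process can only move from $l$ to $\geq l+2$ or to $l-1$, and from $l-1$ only to $\geq l+1$ or to $l-2$; it can never jump directly from $l-1$ to $l$. Consequently, every re-entry to scale $l$ from below must come from scale $\leq l-2$ (so the walk dipped below $2^{l-2}$ before jumping to $[2^{l-1},2^l)$, forcing a Doob upcrossing of the dyadic interval one octave down), and every re-entry from above (from scale $l+1$) is preceded by an earlier exit from scale $l$ at $\geq 2^{l+1}$ (an upcrossing one octave up). Hence $M_{nl}$ is dominated by $1$ plus a sum of upcrossing counts of a few \emph{fixed} dyadic intervals near $2^l$, each with geometric tail. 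This is the accounting your proposal needs but does not supply; once it is in place, the Chernoff combination you outline (or the paper's negative-binomial concentration) finishes the proof.
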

\begin{proof}
	Let us first show that for any $l\geq 2$,
	\begin{eq}\label{eq:T-nl-bound}
		\PR(T_{nl}(\sigma) \neq 0)\leq 2^{-(l-1)}.
	\end{eq}
	For any $t\geq 0$, let $\cF_t$ denote the sigma-field generated by $(\zeta_u)_{u =0}^t$, where we take $\zeta_0 =1$. 
	Note that if $T_{nl}(\sigma)\neq 0$, then  $s_n(u)$ hits $2^{l-1}$ before hitting zero. 
	For $H>1$, let $\gamma_H:= \min\{t: s_n(t) \geq H, \text{ or }s_n(t) =0\}$.
	Since $\E[\zeta_u-1]<0$ by \eqref{eq:BP-expt}, $(s_n(t))_{t\geq 0}$ is a supermartingale with respect to the filtration $(\cF_t)_{t\geq 0}$.
	Consequently, an application of the optional stopping theorem yields
	\begin{eq}
		H \PR(s_n(\gamma_H) \geq  H) \leq \E[s_n(\gamma_H)] \leq \E[s_n(0)] = 1,
	\end{eq}
	and therefore, 
	\begin{eq}\label{super-mg-hitting-time-bound}
		\PR(s_n(\gamma_H) \geq  H) \leq \frac{1}{H}.
	\end{eq}
	Thus, \eqref{eq:T-nl-bound} follows by taking $H = 2^{l-1}$ together with the fact that $T_{nl}(\sigma)\neq 0$ implies that $s_n(\gamma_H)\geq H$.

	Next, we define $U_n(t,[a,b))$--the number of upcrossings of an interval $[a,b)$ by $\bld{s}_n$ up to time $t$--to be the supremum of all integers $k$ such that there exist times $(u_j,t_j)_{j=1}^k$ satisfying $0\leq u_1<t_1<u_2<\dots<t_k\leq t$, and $s_n(u_j)<a<b\leq s_n(t_j)$ for all $j\in [k]$. 
	We will use the following simple fact (see \cite[Proposition 3.2]{A17}):
	for any positive integers $k, z, a, b$ with $ 0<z<a<b$,
	\begin{eq}\label{eq:upcrossing-inequality}
		\PR_z\big(U_n(\sigma,[a,b)) \geq k\big) \leq \Big(\frac{ a-1}{b}\Big)^k.
	\end{eq}
	Next define $\visit(l,t)$ to be the number of visits to scale $l$ upto time $t$, i.e., this is the supremum over $k\in \N$ such that one can find $(u_j,t_j)_{j=1}^k$ with $u_1<t_1< \dots <u_k<t_k \leq t$ satisfying $\scl(s_n(u_j))\neq l$ but $\scl(s_n(t_j)) = l$. 
	For the random walk $\bld{s}_n$ started at $s_n(0)=1$, we set $\visit(1,0) =1$ and $\visit(l,t) = 0$ \blue{if $\bld{s}_n$ does not enter scale $l$ before time $t$.} 
	Further, define $M_{nl} = \visit(l,\sigma)$ (the total number of visits to scale $l$) and $t_{jl} = \#\{t<\sigma:\scl(s_n(t))=l, \visit(l,t)=j \}$  (the time spent at scale $l$ during the $j$-th visit). \blue{Note that, if  $T_{nl} (\sigma) \neq 0$ occurs, then $M_{nl} \geq 1$, and}  $T_{nl} (\sigma) = \sum_{j=1}^{M_{nl}} t_{jl}$. 
	Thus, for any $m\geq 2$ and $x\in\bZ_{\geq 2}$,
	\begin{eq}\label{eq:Tnl-split-up}
		\PR\big(T_{nl} (\sigma)> 5x r_{nl} \big)=
		\PR\big(\sum_{j=1}^{M_{nl}} t_{jl}>\blue{5x} r_{nl}\big)
		\leq \PR(M_{nl}>m)+\PR\big(\sum_{j=1}^{m} t_{jl}>\blue{5x}r_{nl}\big).
	\end{eq}

	Now, $M_{nl}>m$ implies that $T_{nl}(\sigma)\neq 0$, and after the first visit to scale $l$, the walk comes back to scale $l$ at least $m$ times before hitting zero.
	In any of the subsequent visits, if $\bld{s}_n$ enters scale $l$ from below (this can only happen for $l\geq 3$), then that would imply an upcrossing of the interval $[2^{l-2},2^{l-1})$ has taken place.
	Otherwise, if $\bld{s}_n$ enters scale $l$ from above in any of the subsequent visits, then it must be the case that while leaving the scale $l$ during the previous visit, the walk went from scale $l$ to a higher scale.
	This yields an upcrossing of $[2^{l},2^{l+1})$.
	Therefore, 
	for any $l\geq 3$,
	$M_{nl}>m$ implies that $T_{nl}(\sigma)\neq 0$, and after the first visit to scale $l$ and before hitting zero, either at least $m/2$ many upcrossings of $[2^{l-2},2^{l-1})$ have taken place, or at least $m/2$ many upcrossings of $[2^{l},2^{l+1})$ have taken place.
	Thus, using \eqref{eq:T-nl-bound}, \eqref{eq:upcrossing-inequality}, and the strong Markov property, for any $l\geq 3$,
	\begin{eq}\label{Mnl-tail-bound}
		\PR(M_{nl}>m)
		\leq 
		\frac{C}{2^{l+m/2}}.
	\end{eq}

	Next, by the definition of $r_{nl}$ given right above Lemma~\ref{lem:r-nl-ub}, $\PR_z(t_{jl} > k r_{nl}) \leq 2^{-k}$ for any $z>0$, which implies that $\lfloor t_{jl}/r_{nl}\rfloor$ can be stochastically dominated by a  Geometric$(1/2)$ random variable.
	Using the strong Markov property, it follows that for any $z>0$, under $\PR_z$, $\sum_{j=1}^m \lfloor t_{jl}/r_{nl}\rfloor$ is stochastically dominated by $\sum_{i=1}^m g_i$, where $(g_i)_{i\geq 1}$ is an i.i.d.~collection of Geometric$(1/2)$ random variables.
	Thus, for any $z>0$,
	\begin{align*}
		\PR_z\bigg(\sum_{j=1}^mt_{jl}\geq (k+m)r_{nl}\bigg) 
		&\leq 
		\PR_z\bigg(\sum_{j=1}^m\Big\lfloor\frac{t_{jl}}{r_{nl}}\Big\rfloor > k\bigg)
		\leq 
		\PR\bigg(\sum_{i=1}^m g_i>k\bigg) \\
		&= 
		\PR(\mathrm{Bin}(k,1/2)<m)
		\leq 
		\e^{-(k-2m)^2/2k},
	\end{align*}
	for $2m\leq k$, where the last step follows using standard concentration inequalities such as \cite[Theorem~2.1]{JLR00}.
	Consequently, using \eqref{eq:T-nl-bound} and the strong Markov property,
	$
	\PR\big(\sum_{j=1}^mt_{jl}\geq (k+m)r_{nl}\big) 
	\leq 
	2^{-(l-1)}\cdot\e^{-(k-2m)^2/2k}
	$
	for $2m\leq k$.
	\blue{Combining this with \eqref{eq:Tnl-split-up} and \eqref{Mnl-tail-bound}, and taking $k = 4x$ and $m = x$ yields
		\begin{eq}
			\PR(T_{nl} (\sigma)> 5x r_{nl} ) \leq C 2^{-l-C'x}
		\end{eq}
		for any $l\geq 3$.
		The proofs for $l=1$ and $l=2$ follow similar steps.
		This completes the proof of Lemma~\ref{lem:time-spent-l-ub}.} 
\end{proof}

We are now ready to prove Proposition~\ref{prop:RW-hitting-estimate}.

\begin{proof}[Proof of Proposition~\ref{prop:RW-hitting-estimate}]
	Recall the definition of $\bld{s}_n$ from \eqref{eq:random-walk-tree}  starting from one, so that $s_n(0)=1$. 
	Fix $\delta>0$.
	We first estimate the probability of the event $\cB_n$ that $\bld{s}_n$  hits $\delta n^\alpha/2$ before hitting zero. 
	Let $\gamma:= \min\{t\colon s_n(t) \geq \delta n^{\alpha}/2, \text{ or }s_n(t) =0\}$. 
	By \eqref{super-mg-hitting-time-bound},
	\begin{eq}
		\PR(\cB_n) = \PR\Big(s_n(\gamma) \geq  \frac{\delta n^{\alpha}}{2}\Big) \leq \frac{2}{\delta n^{\alpha}}.
	\end{eq}
	\blue{Let $m := \max\{l\geq 1: 2^{l+1} \leq \delta n^{\alpha}\}$.
		On $\cB_n^c$, $H_{nl}(\sigma) = 0$ for $l>m$.
	}
	Thus, for any sequence of positive numbers $(b_l)_{l\geq 1}$, 
	\begin{eq}\label{eq:H-n-sigma-decompose}
		\PR\bigg(H_n(\sigma) \geq \sum_{l=1}^m \frac{b_l r_{nl}}{ 2^{l-1}}\bigg) &\leq \frac{2}{\delta n^{\alpha}} + \PR\bigg(H_n(\sigma) \geq \sum_{l=1}^m \frac{b_l r_{nl}}{ 2^{l-1}}, \text{ and } \cB_n^c \text{ occurs}\bigg)\\
		&\leq \frac{2}{\delta n^{\alpha}} + \PR\bigg(H_{nl}(\sigma) \geq \frac{b_l r_{nl}}{ 2^{l-1}} \text{ for some }1\leq l\leq m \bigg).
	\end{eq}
	Using \eqref{eq:T-H-relation} and Lemma~\ref{lem:time-spent-l-ub}, \eqref{eq:H-n-sigma-decompose} yields
	\begin{eq} \label{calc-simple-H-n-sigma}
		\PR\bigg(H_n(\sigma) \geq \sum_{l=1}^m \frac{b_l r_{nl}}{ 2^l}\bigg) \leq \frac{2}{\delta n^{\alpha}} + \sum_{l=1}^m \PR(T_{nl} (\sigma) \geq b_l r_{nl})\leq \frac{2}{\delta n^{\alpha}} + C\sum_{l=1}^m 2^{- l - C' b_l}. 
	\end{eq}
	Letting $b_l = \frac{1}{C'} (m-l+1 +2 \log_2 (m-l+1))$ for $1\leq l\leq m$, and using Lemma~\ref{lem:r-nl-ub},
	\begin{eq}\label{calc-sum-blrnl}
		\sum_{l=1}^m \frac{b_l r_{nl}}{2^{l-1}} &\leq C \sum_{l=1}^m \big(m-l+1 +2\log_2(m-l+1)\big) 2^{l(\tau-3)} \\
		&= C \sum_{j=1}^m \frac{(j+2\log_2 j) 2^{(m+1)(\tau-3)} }{2^{j(\tau-3)}}\leq C (\delta n^{\alpha}) ^{\tau - 3}  \sum_{j=1}^m \frac{(j+2\log_2 j) }{2^{j(\tau-3)}}\\
		& \leq C n^{\eta} \delta^{\tau -3},
	\end{eq}where we have used $\sum_{j=1}^\infty  \frac{(j+2\log_2 j) }{2^{j(\tau-3)}} <\infty$ in the last step; 
	the bound in \eqref{calc-sum-blrnl} holds for all $n\geq n_{\star}$, where $n_{\star}$ is as in Lemma~\ref{lem:r-nl-ub}.
	Also, 
	\begin{eq}\label{calc-prob-ub}
		\sum_{l=1}^m 2^{-l- C'b_l} = \sum_{l=1}^m 2^{-(m+1)} (m-l+1)^{-2} \leq \frac{4}{\delta n^{\alpha}} \sum_{l=1}^\infty \frac{1}{l^2}.
	\end{eq}
	Thus, the claim in Proposition~\ref{prop:RW-hitting-estimate} follows for $n\geq n_{\star}$ by combining \eqref{calc-sum-blrnl} and \eqref{calc-prob-ub} with \eqref{calc-simple-H-n-sigma}.
	We conclude that the claimed bound holds for $n\geq N_{\lambda}$ by choosing a larger constant $C$ on the right side of \eqref{eqn:2}.
\end{proof}

\subsection{Proof of Proposition~\ref{prop:diamter-small-comp}} 
Let us now complete the proof of Proposition~\ref{prop:diamter-small-comp} using Proposition~\ref{prop:coupling-uppperbound} and Theorem~\ref{lem:boundary-small-prob}. 
\blue{We take $K_n$ as in Lemma~\ref{lem:technical} so that the results in Section~\ref{sec:height-vs-rw} hold. }
Note that these bounds work for $i_2(\lambda)\leq i\leq K_n$, and we will use path counting arguments from \cite{J09b,BDHS17} to bound the diameter for $i>K_n$.
Define $\sC_{ \mathrm{res}} (i) $ to be the connected component containing vertex $i$ in the graph $\cG_n^{\sss > i-1}=\CM \setminus [i-1]$.
Note that if $\Delta^{\sss >K}>\varepsilon n^{\eta}$, then there exists
a path of length $\varepsilon n^\eta$ in $\CM$ avoiding all the vertices in $[K]$. 
Suppose that the minimum index among vertices on that path is $i_0$.
Then $\diam (\sC_{ \mathrm{res}} (i_0) )>\varepsilon n^{\eta}$.
Therefore, $\Delta^{\sss >K}>\varepsilon n^{\eta}$ implies that either there exists $i\in (K,K_n)$ satisfying $\diam (\sC_{ \mathrm{res}} (i) )>\varepsilon n^{\eta}$, or $\mathrm{diam}(\CM\setminus [K_n]) > \varepsilon n^{\eta}$.
We will use the following lemma first to complete the proof of Proposition~\ref{prop:diamter-small-comp} and prove the lemma subsequently:
\begin{lemma}\label{lem:diam-bound-subcritical}
	Under \rm Assumptions~\ref{assumption1} and \ref{assumption-extra}, for any $\varepsilon>0$,  $\lim_{n\to\infty}\PR(\mathrm{diam}(\CM\setminus [K_n]) > \varepsilon n^{\eta}) = 0$, \blue{where  $K_n$ as in Lemma~\ref{lem:technical}.}
\end{lemma}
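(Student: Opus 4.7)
The plan is to reduce the diameter bound to a union bound on radii of single vertices, and for each vertex use the branching-process coupling and height estimates from Sections~\ref{sec:BP-approximation} and~\ref{sec:height-vs-rw}, applied now to the graph $\cG_n^{\sss >K_n}$. Write $\sC(v)$ for the component of $v$ in $\cG_n^{\sss >K_n}$, and note
\[
\PR\big(\diam (\cG_n^{\sss >K_n})>\varepsilon n^\eta\big)\leq \sum_{v\in (K_n,n]} \PR\big(\exists\, u\in \sC(v):\, \dst(v,u)>\varepsilon n^\eta\big),
\]
so it suffices to bound the radius of each component from an arbitrary starting vertex $v$ with index $j>K_n$.

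Next, I would extend the coupling of Proposition~\ref{prop:coupling-uppperbound} to exploration from $v$ in $\cG_n^{\sss >K_n}$. Run the BFS of Algorithm~\ref{algo-expl} starting at $v$, restricted to pairing with half-edges of vertices in $(K_n,n]$. On the event $(\cK_n^v)^c=\{\rE(\sC(v))\leq n^{\rho+\varepsilon'}\}$, whose complement has stretched-exponentially small probability by Proposition~\ref{lem:volume-large-deviation}, the number of available half-edges is at least $\ell_n-\sum_{i\leq K_n}d_i -2n^{\rho+\varepsilon'}\geq c\ell_n$, since $\sum_{i\leq K_n} d_i\leq K_n d_1 =O(K_n n^\alpha)=o(n)$ by Assumption~\ref{assumption-extra}(ii). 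Thus the BFS from $v$ is stochastically dominated by the branching process $\cX$ with $d_v$ initial offspring and progeny distribution $\bar\xi_n(K_n+1)$ defined in~\eqref{eqn:668}, exactly as in Section~\ref{sec:BP-approximation}.

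Then I would apply the height estimate of Theorem~\ref{lem:boundary-small-prob} to this process. The proof of that theorem only used (i) the mean bound $\E[\bar\xi_n(i)]\leq 1-\beta_i^n n^{-\eta}$ established in~\eqref{eq:BP-expt}, which persists for $i=K_n+1$ because $\beta_{K_n+1}^n\geq \beta_{K_n}^n=\omega(\log n)$, and (ii) the one-offspring bound of Proposition~\ref{prop-height-bound-one-progeny}, which transfers from $\bar\xi_n(i_2(\lambda))$ to $\bar\xi_n(K_n+1)$ by the stochastic domination $\bar\xi_n(K_n+1)\preceq\bar\xi_n(i_2(\lambda))$. Combining these yields
\[
\PR\big(\exists\, u\in \sC(v):\dst(v,u)>\varepsilon n^\eta\big)
\;\leq\; \frac{C d_v}{n^{\alpha}}\,\e^{-\varepsilon\beta_{K_n}^n/2}+C\e^{-C'n^{\varepsilon'/2}}.
\]

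Finally, summing over $v\in(K_n,n]$ and using $\sum_{v} d_v\leq \ell_n=O(n)$ and $n\ll \e^{cn^{\varepsilon'/2}}$,
\[
\PR\big(\diam(\cG_n^{\sss >K_n})>\varepsilon n^\eta\big)
\;\leq\; \frac{C\,\e^{-\varepsilon\beta_{K_n}^n/2}}{n^\alpha}\sum_{v>K_n} d_v +o(1)
\;\leq\; C n^{1-\alpha}\,\e^{-\varepsilon\beta_{K_n}^n/2}+o(1).
\]
Since $\beta_{K_n}^n=\omega(\log n)$ by Assumption~\ref{assumption-extra}(ii), the exponential factor beats any polynomial in $n$, so the right-hand side tends to $0$. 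The main obstacle is verifying the branching-process machinery of Section~\ref{sec:height-vs-rw} applies uniformly to $\bar\xi_n(K_n+1)$: one must check that the key inputs (supermartingale property, L\'evy concentration bound~\eqref{eq:esseen-inequality}, and the mean-decay estimate~\eqref{eq:BP-expt}) all hold at the threshold $i=K_n+1$, which they do because $\beta_i^n$ is monotone in $i$ and Assumption~\ref{assumption-extra}(i) is independent of $i$. Everything else is a routine union bound, with the decisive quantitative gain coming from Assumption~\ref{assumption-extra}(ii).
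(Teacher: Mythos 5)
Your proof is correct, but it takes a genuinely different route from the paper. The paper's own proof is a short path-counting argument: it observes that $\CM\setminus[K_n]$ is again a configuration model, now with criticality parameter $\nu_n'\leq 1-R_nn^{-\eta}$ where $R_n=\omega(\log n)$ (using Assumption~\ref{assumption-extra}(ii)), and then invokes Janson's lemma \cite[Lemma 6.1]{J09b} to bound the expected number of paths of length $l$ from any vertex by $\ell_n'^2(\nu_n')^{l-1}$. Summing the geometric series over $l>\varepsilon n^\eta$ gives $Cn^2\e^{-\varepsilon R_n}=o(1/n)$ per vertex, and a union bound over $i>K_n$ finishes the proof. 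Your approach instead re-applies the branching-process coupling of Proposition~\ref{prop:coupling-uppperbound} and the height estimate of Theorem~\ref{lem:boundary-small-prob} at the boundary index $i=K_n+1$, together with a union bound over starting vertices $v>K_n$. Both arguments are at heart first-moment bounds — the paper counts paths, you count offspring at a deep generation — and both succeed because $\beta_{K_n}^n=\omega(\log n)$ beats every polynomial in $n$. The paper's route is shorter and avoids re-checking the Section~\ref{sec:height-vs-rw} machinery at a new index; yours has the aesthetic advantage of reusing the framework already set up, and in fact you are using more than you need: for this lemma the raw Markov bound $\PR(\bar\partial_v(\varepsilon n^\eta)\neq\varnothing)\leq\E[M_{\varepsilon n^\eta}]\leq d_v\e^{-\varepsilon\beta_{K_n}^n}$ already suffices after summing over $v$, without invoking Proposition~\ref{prop-height-bound-one-progeny} or the $1/n^\alpha$ gain.

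One small inaccuracy in your justification, which does not affect the conclusion: you say the mean bound \eqref{eq:BP-expt} persists at $i=K_n+1$ ``because $\beta_i^n$ is monotone in $i$.'' Monotonicity of $\beta^n_i$ is not the reason; what actually has to be re-checked is that the normalizing constant $L$ in \eqref{eqn:668} still satisfies $L=\ell_n(1+o(n^{-\eta}))$ when $i=K_n+1$, which holds because $\sum_{j\leq K_n}d_j = O(K_nn^\alpha)=o(n^{2\alpha})$ and $n^{2\alpha}/\ell_n=n^{-\eta}$. You do derive $O(K_nn^\alpha)$ elsewhere in the plan but state the weaker $o(n)$, which is enough for the ``at least $c\ell_n$ half-edges'' claim but not, as written, for propagating \eqref{eq:BP-expt}. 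With that observation made explicit, the argument is complete.
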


\begin{proof}[Proof of Proposition~\ref{prop:diamter-small-comp}]
	As defined earlier around \eqref{eqn:668}, $\partial_i(r)$ denotes the number of vertices at distance $r$ from the vertex $i$ in the graph $\cG_n^{\sss > i-1}$. 
	Recall the definition of $\bar{\partial}$ in Proposition~\ref{prop:coupling-uppperbound}.
	Thus, Proposition~\ref{prop:coupling-uppperbound} and Theorem~\ref{lem:boundary-small-prob} together with Lemma~\ref{lem:diam-bound-subcritical}, yield that
	\begin{eq}\label{eq:union-bound-diameter}
		\prob{ \Delta^{\sss >K}>\varepsilon n^{\eta} } &\leq \sum_{i\in (K,K_n)}\PR(\bar{\partial}_i(\varepsilon n^{\eta}\red{/2})\neq \varnothing) + \PR(\mathrm{diam}(\CM\setminus [K_n]) > \varepsilon n^{\eta})\\
		&\leq C\sum_{i\in (K,K_n)} \Big(\frac{d_i}{n^{\alpha}}\Big)\e^{-\varepsilon\beta_i^n/4} +o(1),
	\end{eq}
	where the last line tends to zero if we first take $n\to\infty$ and then take $K\to\infty$ using  Assumption~\ref{assumption-extra} \blue{and Lemma~\ref{lem:technical} below}. 
	Thus the proof of Proposition~\ref{prop:diamter-small-comp} follows.
\end{proof}

\begin{proof}[Proof of Lemma~\ref{lem:diam-bound-subcritical}]
	Let $\bld{d}':=(d_i'\, ;\, i\in [n]\setminus [K_n])$, where $d_i'$ denotes the degree of $i$ in $\CM\setminus [K_n]$.
	Note that $\CM\setminus [K_n]$ is again distributed as a configuration model conditionally on its degree sequence $\bld{d}'$, with the criticality parameter 
	\begin{eq}
		\nu'_n = \frac{\sum_{i>K_n}d_i'(d_i'-1)}{\sum_{i>K_n} d_i'} \leq \frac{\sum_{i>K_n}d_i(d_i-1)}{\ell_n - 2 \sum_{i=1}^{K_n} d_i} \leq 1 - R_n n^{-\eta}, \quad R_n = \omega( \log n),
	\end{eq}
	where the penultimate step follows using $d_i'\leq d_i$ and $\ell_n' := \sum_{i>K_n} d_i' = \ell_n - 2 \sum_{i=1}^{K_n} d_i$, and the last bound follows from 
	\blue{the definition of $K_n$ given in Lemma~\ref{lem:technical}~(ii)} and an argument identical to that in \eqref{eq:computation-mean-BP}.
	Let $\PR'(\cdot)$ denote the probability measure conditionally on $\bld{d}'$.
	We will use path-counting techniques for subcritical configuration models. 
	An argument similar to the one given in \cite[Lemma 6.1]{J09b} shows that for any $l\geq 1$, conditional on $\bld{d}'$, the expected number of paths of length $l$ starting from vertex $i$ is at most 
	\[
	\frac{\ell_n'd_i' (\nu_n')^{l-1}}{\ell_n'-2l+3} \leq \ell_n'^2 (\nu_n')^{l-1}.
	\]
	Thus, for any $i>K_n$,
	\begin{eq} \label{eq:path-counting-tail}
		\PR'(\exists \text{ path of length at least }\varepsilon n^\eta \text{ from } i \text{ in }\CM\setminus [K_n]) \leq C \ell_n'^2\sum_{l>\varepsilon n^{\eta}} (\nu_n')^l,
	\end{eq}
	Thus, for $i>K_n$,  the probability in \eqref{eq:path-counting-tail} is at most 
	\begin{eq} 
		Cn^2(1-R_n n^{-\eta})^{\varepsilon n^{\eta}}/(R_n n^{-\eta})
		\leq 
		Cn^{2+\eta} \e^{-\varepsilon R_n} 
		= 
		o(1/n),
	\end{eq}
	\blue{since $R_n\gg \log{n}$.} 
	Therefore, 
	\begin{eq}
		\PR'(\exists i>K_n: \exists \text{ path of length at least }\varepsilon n^\eta \text{ from } i \text{ in }\CM\setminus [K_n]) = o(1),
	\end{eq}and the proof of Lemma~\ref{lem:diam-bound-subcritical} follows.
\end{proof}

\section{Verification of the assumptions for  percolated degrees: Proof of Theorem~\ref{cor:GLM-percoltion}} \label{sec:perc-degrees}
Let $\cG_n$ denote the graph obtained by performing percolation with edge retention  probability $p_c(\lambda)$ (defined in \eqref{eq:critical-window-defn}) on $\CM$.
Let $\bld{d}^p=(d_i^p)_{i\in [n]}$ denote the degree sequence of $\cG_n$. 
By \cite[Lemma 3.2]{F07}, the law of $\cG_n$, conditionally on $\bld{d}^p$, is the same as the law of $\rCM_n(\bld{d}^p)$. 
Thus, it is enough to show that if the original degree sequence $(\bld{d}_n \, ,\, n\geq 1)$ satisfies Assumptions~\ref{assumption1}(i),~\ref{assumption1}(ii)~and~\ref{assumption-extra},
then we can construct $(\bld{d}^p\, ,\, n\geq 1)$ on the same probability space so that  Assumption~\ref{assumption1}, \eqref{defn:criticality}, and Assumption~\ref{assumption-extra} are satisfied almost surely (with possibly different parameters), since then the claim in Theorem~\ref{cor:GLM-percoltion} will follow from Theorem~\ref{thm:gml-bound}.

\blue{First, note that $\E[d_i^p] = d_ip_c(\lambda) (1+o(1))$. Also, given $\CM$, changing the status of an edge (deleted or retained) can change $d_i^p$ by at most $2$ when the edge is incident to $i$. There are at most $d_i$ choices for such an edge. Thus, the bounded difference inequality \cite[Corollary 2.27]{JLR00} implies}, for each fixed $i\geq 1$, and for any $\varepsilon>0$,
\begin{gather}
	\PR\big( |d_i^p - d_i p_c(\lambda)| > \varepsilon d_i p_c(\lambda)\big) \leq 2\e^{-\frac{\varepsilon^2}{4} d_i p_c^2(\lambda)}\, . \label{hub-perc}
\end{gather}
In particular, for each $i\geq 1$, $n^{-\alpha} d_i^p \weakc \theta_i/\nu$ as $n\to\infty$, which verifies Assumption~\ref{assumption1}~(i).

Next, let $M_r^p = \sum_{i\in [n]} (d_i^p)_{r}$ and $M_r = \sum_{i\in [n]} (d_i)_{r}$, where $(x)_r:= x(x-1)\cdots (x-r+1)$. 
To verify the moment conditions in  Assumption~\ref{assumption1}~(ii), 
note that \eqref{eqn:670} holds for $\bld{d}^p$ since \blue{$\sum_{i>K} (d_i^p)_3 \leq \sum_{i>K} (d_i)_3 $.} 
We will show that 
\begin{eq}\label{moment-convergence}
	M_1^p =(1+\OP(n^{-1/2}))p_c(\lambda)M_1\ \ \text{ and }\ \
	M_2^p =(1+\OP(n^{\frac{3\alpha}{2}-1}))p_c(\lambda)^2M_2.
\end{eq}
Using \eqref{moment-convergence}, the first and second moment assumptions in Assumption~\ref{assumption1}~(ii) holds for the percolated degree sequence. 
The estimate \eqref{moment-convergence} also shows that \eqref{defn:criticality} holds. Indeed, 
\begin{eq}
	\frac{M_2^p}{M_1^p} = \frac{p_c(\lambda)M_2}{M_1} (1+ \OP(n^{\frac{3\alpha}{2}-1})) = 1+\nu\lambda n^{-\eta} +o(n^{-\eta}),
\end{eq}
where the last step follows using \eqref{eq:defn-super-crit}, \eqref{eq:critical-window-defn}, and the fact that $-1+3\alpha/2< -1+2\alpha = -\eta$.

It remains to prove \eqref{moment-convergence}. 
Since $\frac{1}{2}\sum_{i\in [n]} d_i^p $ has a binomial distribution with parameter $\ell_n/2$ and $p_c(\lambda)$, the first asymptotics follows from Chebyshev's inequality. 
For the asymptotics of $M_2^p $, we use the following construction of $\cG_n$ from~\cite{F07}. 
\begin{algo}\label{algo:perc-degrees}\normalfont
	$\bld{d}^p = (d_i^p)_{i\in [n]}$ can be generated as follows:
	\begin{enumerate}
		\item[(S0)] Sample $R_n \sim \mathrm{Bin}(\ell_n/2, p_c(\lambda))$.
		\item[(S1)] Conditionally on $R_n$, sample a uniform subset of $2R_n$ half-edges from the set of $\ell_n$ half-edges. Let $I_j^{(i)}$ denote the indicator that $j$-th half-edge of $i$ is selected. Then $d_i^p = \sum_{j=1}^{d_i} I_j^{(i)}$ for all $i\in [n]$.
	\end{enumerate}
\end{algo}
\noindent Using the above construction, note that 
\begin{eq}
	M_2^p = \sum_{i\in [n]} \sum_{\substack{1\leq j_1\neq j_2\leq d_i}}I_{j_1}^{(i)}I_{j_2}^{(i)}.
\end{eq}
Let $\PR_1(\cdot) = \PR(\cdot \vert R_n)$ and similarly define $\E_1[\cdot]$, $\mathrm{Var}_1(\cdot)$, and $\mathrm{Cov}_1(\cdot,\cdot)$.
Then, 
\begin{eq}\label{M-2-p-cond-exp}
	\E_1[M_2^p] &= \sum_{i\in [n]} \sum_{\substack{1\leq j_1\neq j_2\leq d_i}} \PR_1 (I_{j_1}^{(i)}=1, I_{j_2}^{(i)} = 1)  = \sum_{i\in [n]} \sum_{\substack{1\leq j_1\neq j_2\leq d_i}} \frac{{\ell_n-2 \choose 2R_n-2}}{{\ell_n \choose 2R_n}}\\
	&=\sum_{i\in [n]} \sum_{\substack{1\leq j_1\neq j_2\leq d_i}} \frac{2R_n(2R_n-1)}{\ell_n(\ell_n-1)} = (1+\OP(n^{-1/2}))p_c(\lambda)^2 M_2,
\end{eq}
where the last step follows using $R_n = (1+\OP(n^{-1/2})) p_c(\lambda)\ell_n/2$.

\blue{Next, recall that a collection of random variables $(X_1, \dots, X_t)$ is called negatively associated if for every index set $I\subset [k]$, 
	\begin{eq}\label{defn:neg-association}
		\mathrm{Cov}\big(f(X_i,i\in I), g(X_i,i\in I^c)\big) \leq 0,
	\end{eq}
	for all functions $f: \R^{|I|} \mapsto \R$ and $g: \R^{t-|I|} \mapsto \R$ that are component-wise non-decreasing (\cite[Definition 3]{Dubhashi1996}).} 
Then, conditionally on $R_n$, $I_j^{(i)}$, $j = 1,\dots, d_i$, $i\in [n]$ are negatively associated (cf.~\cite[Theorem 10]{Dubhashi1996}), which yields \blue{the almost sure bound}
\begin{eq}
	\mathrm{Var}_1(M_2^p) &\leq \sum_{i\in [n]} \sum_{\substack{1\leq j_1\neq j_2\leq d_i}} \mathrm{Var}_1(I_{j_1}^{(i)}I_{j_2}^{(i)})+\sum_{i\in [n]} \sum_{\substack{1\leq j_1\neq j_2\leq d_i\\
			1\leq j_3\neq j_4\leq d_i\\
			|\{j_1,j_2\}\cap \{j_3,j_4\}| = 1}} \mathrm{Cov}_1(I_{j_1}^{(i)}I_{j_2}^{(i)},I_{j_3}^{(i)}I_{j_4}^{(i)}),
\end{eq}
\blue{since the contribution of $|\{j_1,j_2\}\cap \{j_3,j_4\}| = 0$ can be ignored due to negative association. 
	Also, $\mathrm{Var}_1(I_{j_1}^{(i)}I_{j_2}^{(i)})\leq 1$ and $|\mathrm{Cov}_1(I_{j_1}^{(i)}I_{j_2}^{(i)},I_{j_3}^{(i)}I_{j_4}^{(i)})| \leq (\mathrm{Var}_1(I_{j_1}^{(i)}I_{j_2}^{(i)})\mathrm{Var}_1(I_{j_3}^{(i)}I_{j_4}^{(i)}))^{1/2} \leq 1$. 
	Therefore, 
	\begin{eq}
		\mathrm{Var}_1(M_2^p)\leq \sum_{i\in [n]}d_i^2+4 \sum_{i\in [n]}d_i^3 = O(n^{3\alpha}).
	\end{eq}
}
Thus, for any $A>0$,
\begin{eq}\label{M-2-p-chebyshev}
	&\PR \big( |M_2^p - \E_1[M_2^p]| > A n^{3\alpha/2} \big) = \E\big[\PR_1 \big( |M_2^p - \E_1[M_2^p]| > A n^{3\alpha/2} \big) \big]\leq \frac{\E\big[\mathrm{Var}_1(M_2^p)\big]}{A^2n^{3\alpha}} ,
\end{eq}
which can be made arbitrarily small by choosing $A>0$ large. 
Thus, we conclude the asymptotics of $M_2^p$ in \eqref{moment-convergence} by using \eqref{M-2-p-cond-exp} and \eqref{M-2-p-chebyshev}.

Finally, we need to show convergence \blue{in distribution} of empirical measure of $\bld{d}^p$ to finish verifying Assumptions~\ref{assumption1}~(ii)~\blue{ and~(iii)}. Let $n_k^p= \#\{i:d_i^p = k\}$, and $n_{\geq k} ^p = \sum_{r\geq k} n_{r}^p$. 
It suffices to show that 
\begin{eq}\label{empirical-perc}
	\frac{n_{\geq k}^p}{n} \weakc \PR(D^p \geq  k) \ \ \text{ for all } k\geq 1, 
\end{eq}
where $D^p$ satisfies $\big(D^p\mid D = l\big) \sim \mathrm{Bin} (l,1/\nu)$, for all $l\geq 1$. 
Let $V_n$ be a uniformly chosen vertex and $D_n^p= d_{V_n}^p$ and $D_n = d_{V_n}$. 
By the construction in Algorithm~\ref{algo:perc-degrees},
\begin{eq}
	\PR(D_n^p = k \mid D_n = l, R_n) &= \frac{{l\choose k} {\ell_n - l \choose 2R_n - k}}{{\ell_n \choose 2R_n}} = (1+o(1)) {l\choose k} \bigg(\frac{2R_n}{\ell_n}\bigg)^k\bigg(1-\frac{2R_n}{\ell_n}\bigg)^{l-k}\\
	& = (1+\oP(1)) {l\choose k} \bigg(\frac{1}{\nu}\bigg)^k\bigg(1-\frac{1}{\nu}\bigg)^{l-k},
\end{eq}
where in the final step we have used that $R_n =  p_c(\lambda)\ell_n/2  (1+\oP(1))$ and $p_c(\lambda) = \nu^{-1} (1+o(1))$.
Thus, 
\begin{eq}\label{exp-perc-empricial}
	\E\bigg[\frac{n_k^p}{n}\ \Big\vert\ R_n\bigg] = \PR(D_n^p = k \mid R_n) \weakc \PR(D^p = k). 
\end{eq}
\blue{Moreover, note that $d_i^p = \sum_{j=1}^{d_i} I_j^{(i)}$, and the definition of negative association in \eqref{defn:neg-association} allows us to conclude negative correlation between increasing functions of $I_j^{(i)}$ that depend on disjoint sets of indices. Therefore,} 
\begin{eq}
	\mathrm{Var}\big(n_{\geq k}^p \mid R_n\big) 
	= 
	\mathrm{Var}\big(\sum_{i\colon d_i\geq k} \1_{\{d_i^p\geq k\} }\ \Big\vert\  R_n\big) 
	\leq 
	\sum_{i\colon d_i\geq k} \mathrm{Var} \big(\1_{\{d_i^p\geq k\} } \mid R_n\big)  
	\leq 
	n,
\end{eq}
and thus $\E[\mathrm{Var}(n_{\geq k}^p/n \mid R_n)] = O(1/n)$. 
This together with \eqref{exp-perc-empricial} yields \eqref{empirical-perc}.

We finally verify that Assumption~\ref{assumption-extra} holds with high probability. 
Let the constants $c_1$ and $c_0$ be as in Assumption~\ref{assumption-extra}.
Let $c_1':=9 c_1\nu$.
It is enough to show that there exists a deterministic constant \blue{$c_0'>0$} such that
\begin{eq}\label{tail-percolated}
	\pr\Big(
	\frac{1}{n}\sum_{i\in [n]} d_i^p\blue{\1\{l<d_i^p\leq c_1' l \}} \geq \frac{c_0'}{l^{\tau-2}} 
	\  \ \text{ for all }\ \ 1\leq l \leq  d_1/c_1'
	\Big)
	\to 
	1
	\ \ \text{ as }\ \ n\to\infty.
\end{eq}
\blue{
	Write $\sum_{*}$ for $\sum_{i: 8 l < d_i p_c(\lambda) \leq 8c_1 l }$.	
	Then, for $1\leq l \leq d_1/c_1'$ and all large $n$, 
	\begin{eq}\label{eqn:23}
		\frac{1}{n}\sum_{i\in [n]} \E\big[d_i^p\1\{l<d_i^p\leq c_1' l \}\big] 
		&
		\geq \frac{1}{n}\sum\displaystyle_*\ \E\big[d_i^p\1\{l<d_i^p\leq c_1' l \}\big] \\
		&=
		\frac{1}{n}\sum\displaystyle_*\  \E\big[d_i^p\1\{l<d_i^p\} \big],
	\end{eq}
	where the last step uses the fact that when $d_i p_c(\lambda)\leq 8c_1 l $, we have 
	$d_i^p \leq d_i \leq 8c_1 l  /p_c(\lambda) \leq c_1' l$ for all large $n$. 
	Let $X_i \sim \mathrm{Bin} (\lfloor d_i/2\rfloor , p_c(\lambda))$. 
	Then $d_i^p$ is stochastically larger than $X_i$. 
	Using \eqref{eqn:23}, we see that for $1\leq l \leq d_1/c_1'$ and all large $n$, 
	\begin{eq}\label{expt-perc-tail}
		&
		\frac{1}{n}\sum_{i\in [n]} \E\big[d_i^p\1\{l<d_i^p\leq c_1' l \}\big] 
		\geq
		\frac{1}{n}\sum\displaystyle_*\  \E\big[X_i\cdot\1\{l<X_i\} \big]
		\\
		&\hskip25pt
		\geq
		\frac{1}{n}\sum\displaystyle_*\  \E[X_i]\PR(X_i>l) 
		\geq 
		\frac{1}{n}\sum\displaystyle_*\  \E[X_i] \PR\big( X_i \geq \floor{\lfloor d_i/2\rfloor p_c(\lambda)} \big) 
		\\
		&\hskip50pt
		\geq
		\frac{1}{n}\sum\displaystyle_*\  \E[X_i] \cdot\frac{1}{2}
		\geq 
		\frac{C}{n}\sum\displaystyle_*\  d_i 
		\geq 
		\frac{C'}{l^{\tau-2}} \, ,
	\end{eq}
	where the third step uses $l< d_i p_c(\lambda) /8 \leq \floor{\lfloor d_i/2\rfloor p_c(\lambda)}$, and the final step follows using Assumption~\ref{assumption-extra}}.

Now let $F_1 := \sum_{i\in [n]} d_i^p\1\{l<d_i^p\leq c_1' l \}$ and $F_2 := \E[F_1\vert \CM ]$.
We will apply the bounded difference  inequality from  \cite[Corollary 2.27]{JLR00}.
Given the graph $\CM$, if we keep one extra edge in the percolated graph, then $F_1$ can change by at most $2c_1' l$. 
Thus, for any $\varepsilon>0$,
\begin{eq}\label{prob-tail-conc-1}
	\PR\Big(|F_1 - F_2|> \frac{n\varepsilon }{l^{\tau-2}}\ \Big\vert\ \CM\Big)\leq 2\exp\bigg(-
	\frac{n^2\varepsilon^2}{l^{2(\tau-2) }(2c_1' l)^2\cdot \frac{\ell_n}{2} 
	}\bigg) \leq 2\e^{-C\varepsilon^2 n l^{-2(\tau-1)}} .
\end{eq}
Also, we can apply concentration inequalities such as \cite[Lemma 2.5]{BCDS18} to conclude that 
\begin{eq}\label{prob-tail-conc-2}
	\PR\Big(|F_2 - \E[F_2]|> \frac{n\varepsilon }{l^{\tau-2}}\ \Big)\leq 2\e^{-C\varepsilon^2 n l^{-2(\tau-1)} 
	}. 
\end{eq}
Combining \eqref{prob-tail-conc-1} and  \eqref{prob-tail-conc-2} together with  \eqref{expt-perc-tail} shows that there exists an $\varepsilon_0>0 $ such that \eqref{tail-percolated} holds if we replace 
``for all $1\leq l\leq d_1/c_1'$'' by
``for all $1\leq l\leq n^{\varepsilon_0}$.'' 
For $l\geq n^{\varepsilon_0}$, we use \eqref{hub-perc} together with a union bound to complete the proof of \eqref{tail-percolated}.

\appendix

\section{A technical lemma}
\begin{lemma}\label{lem:technical}
	Let $\beta_i^n = n^{-2\alpha} \sum_{j=1}^{i-1} d_j^2$. Then {\rm Assumption~\ref{assumption1}(i), \eqref{eqn:670}, and Assumption~\ref{assumption-extra}} imply the following: 
	\begin{enumeratei}
		\item For all $\varepsilon>0$, 
		\begin{eq}\label{eq:extra-assumption-2}
			\lim_{K\to\infty} \limsup_{n\to\infty} \sum_{i > K} \Big(\frac{d_i}{n^\alpha}\Big) \times \e^{-\varepsilon \beta_i^n} = 0.
		\end{eq}
		\item There exists a sequence $(K_n)_{n\geq 1}$ with $K_n\to\infty$, and $K_n = o(n^\alpha)$ such that $\beta_{K_n}^{n} >  \log^3 n $ for all large $n$.
	\end{enumeratei}
\end{lemma}

\begin{proof}
	We will use $C_0,C_1,\dots$ etc. as generic notation for positive constants that do not depend on $n$.
	Recall Assumption~\ref{assumption-extra}. 
	Let $\theta_{i,n} := n^{-\alpha}d_i$, $i\in [n]$. We first claim that Assumption~\ref{assumption-extra} implies 
	\begin{eq}\label{eq:deg-power-lb}
		\min_{2\leq i\leq n} \theta_{i,n}^{\tau -2} \sum_{j = 1}^{i-1}\theta_{j,n} \geq C_0.
	\end{eq}
	To see this, let $1=i_1<i_2<i_3<\ldots$ be the indices such that 
	$d_{i_{k-1}}=d_{i_{k-1} +1}=\ldots=d_{i_k -1}>d_{i_k}$ for $k\geq 2$. 
	Then for $k\geq 2$,
	\[
	\frac{1}{\ell_n}\sum_{j=1}^{i_{k}-1}d_j
	=
	\PR(D_n^* > d_{i_k}) 
	\geq 
	\PR\big(d_{i_k}<D_n^*\leq c_1 d_{i_k} \big) 
	\geq 
	c_0 (d_{i_k})^{-(\tau -2)} ,
	\]
	and consequently,
	\begin{eq}\label{i-k-tail}
		\min_{k} \theta_{i_k,n}^{\tau -2} \sum_{j = 1}^{i_k-1}\theta_{j,n} \geq C_0.
	\end{eq}
	If $i_k>i\geq i_{k-1}$, then 
	$\theta_{i,n}^{\tau -2} \sum_{j = 1}^{i-1}\theta_{j,n} 
	=
	\theta_{i_{k-1},n}^{\tau -2} \sum_{j = 1}^{i-1}\theta_{j,n} 
	\geq 
	\theta_{i_{k-1},n}^{\tau -2} \sum_{j = 1}^{i_{k-1}-1}\theta_{j,n}$. 
	Thus we conclude \eqref{eq:deg-power-lb} from \eqref{i-k-tail}. 

	Next, define 
	\begin{eq}
		f_n (x):= 
		\begin{cases}
			\frac{1}{\theta_{i+1,n}}\, , & \quad \text{if } \sum_{j=1}^{i-1} \theta_{j,n} \leq x < \sum_{j=1}^{i} \theta_{j,n} \text{ for some }i\in[n-1], \\
			0\, , & \quad \text{if } x\geq \sum_{j=1}^{n-1} \theta_{j,n},
		\end{cases}
	\end{eq}
	and 
	\begin{eq}
		g_n (x):= 
		\begin{cases}
			\sum_{j=1}^i \theta_{j,n}^2\, , & \quad \text{if } \sum_{j=1}^{i-1} \theta_{j,n} \leq x < \sum_{j=1}^{i} \theta_{j,n} \text{ for some }i\in[n], \\
			0\, , & \quad \text{if } x\geq \sum_{j=1}^{n} \theta_{j,n}\, . 
		\end{cases}
	\end{eq}
	Since $\sum_{j=1}^i\theta_{j,n}\leq 2\sum_{j=1}^{i-1}\theta_{j,n}$ for $2\leq i\leq n$,
	we have, using \eqref{eq:deg-power-lb}, $\theta_{i+1,n}^{\tau -2} \sum_{j = 1}^{i-1}\theta_{j,n} \geq C_0/2$.
	Therefore, 
	$f_n(x)^{-(\tau -2) } \times 2x \geq C_0$ for any $\theta_{1,n}\leq x < \sum_{j=1}^{n-1} \theta_{j,n}$,
	and consequently, 
	\begin{eq}
		f_n(x) \leq C_1 x^{\frac{1}{\tau-2}} \quad \text{for} \quad \theta_{1,n}\leq x < \sum_{j=1}^{n-1} \theta_{j,n}.
	\end{eq}
	Next, for $i\in[n-1]$, 
	\begin{eq}
		\sum_{j=1}^i \theta_{j,n}^2 \geq \sum_{j=1}^i \theta_{j,n} \theta_{j+1,n} &= \theta_{1,n} \theta_{2,n} + \int_{\theta_{1,n}}^{\sum_{j=1}^i \theta_{j,n}} \frac{\dif x}{f_{n}(x)}\\
		&\geq C_2\int_{\theta_{1,n}}^{\sum_{j=1}^i \theta_{j,n}} \frac{\dif x}{x^{1/(\tau-2)}}\geq C_3 \bigg(\sum_{j=1}^i \theta_{j,n}\bigg)^{\frac{\tau-3}{\tau-2}} - C_4. 
	\end{eq}
	Therefore, 
	\begin{eq}\label{lb-g-n-x}
		g_n(x) \geq C_3 x^{\frac{\tau-3}{\tau-2}} - C_4 \quad \text{for} \quad 0\leq x < \sum_{j=1}^{n-1} \theta_{j,n}.
	\end{eq}
	Now, 
	\begin{eq}
		\sum_{i=K}^{n-1} \theta_{i,n} \e^{-\varepsilon \sum_{j=1}^i  \theta_{j,n}^2} = \int_{\sum_{j=1}^{K-1} \theta_{j,n}}^{ \sum_{j=1}^{n-1} \theta_{j,n} } \e^{-\varepsilon g_n(x)} \dif x \leq C_5\int_{\sum_{j=1}^{K-1} \theta_{j,n}}^{ \infty} \e^{-\varepsilon C_3 x^{\frac{\tau-3}{\tau-2}}} \dif x,
	\end{eq}
	and the above integral is finite for each fixed $K\geq 1$. 
	By Assumption~\ref{assumption1}(i), $\sum_{j=1}^{K-1} \theta_{j,n} \to \sum_{j=1}^{K-1} \theta_{j}$ as $n\to\infty$, which diverges if we take $K\to\infty$. Thus, the proof of \eqref{eq:extra-assumption-2} follows.

	We next prove Lemma~\ref{lem:technical}(ii). 
	Let $K_n:=\lceil n^{\alpha/2}\rceil$.
	Suppose that $\beta^n_{K_n} \leq \log ^3 n$. 
	Using \eqref{lb-g-n-x}, it follows that 
	\begin{eq}
		\log^3 n \geq \beta^n_{K_n} \geq C_3 \bigg(\sum_{j=1}^{K_n} \theta_{j,n}\bigg)^{\frac{\tau-3}{\tau-2}} - C_4,
	\end{eq}and an application of \eqref{eq:deg-power-lb} yields
	\begin{eq}
		C_4 + \log ^3 n \geq C (\theta_{K_n+1,n})^{-(\tau-3)} \implies \theta_{K_n,n} \geq \frac{C'}{(\log n)^{\frac{3}{\tau-3}}}\, . 
	\end{eq}
	Therefore, $\sum_{i=1}^{K_n} \theta_{i,n}^3 \geq C'^3  K_n (\log n)^{-9/(\tau-3)}$. 
	Thus, if $\beta^n_{K_n} \leq \log ^3 n$ for infinitely many $n$, then 
	\begin{eq}
		\liminf_{n\to\infty}n^{-3\alpha} \sum_{i\in [n]} d_i^3 
		\geq 
		\liminf_{n\to\infty} \sum_{i=1}^{K_n} \theta_{i,n}^3 
		= 
		\infty\, ,
	\end{eq}
	which leads to a contradiction as Assumption~\ref{assumption1}(i) and \eqref{eqn:670} imply that $\sup_{n}n^{-3\alpha}\sum_{i\in [n]} d_i^3<\infty$. 
	Thus the claim in Lemma~\ref{lem:technical}~(ii) also follows. 
\end{proof}

\section{Degree sequence satisfying compactness criterion}\label{sec:appendix-comapctness}
In this section, we prove Proposition~\ref{prop:deg-compact}.

\vskip5pt

\noindent{\bf Proof of Proposition~\ref{prop:deg-compact}.}
Define $\bld{d}^{\sss (1,n)}:=(d_i^{\sss (1,n)})_{i\in [n]}$ with $d_i^{\sss (1,n)}: = \lceil n^{\alpha} \theta_i \rceil$ for $i\in [n]$. 
Let $\bld{d}^{\sss (2,n)} = (d_i^{\sss (2,n)})_{i\in [n]}$ be such that, for some $0<K_1<K_2<\infty$,
\begin{eq}\label{eq:d-2n}
	K_1\Big( \frac{n}{i} \Big)^{\alpha} \leq d_i^{\sss (2,n)} \leq K_2\Big( \frac{n}{i} \Big)^{\alpha}, \quad \text{ for }i \in [n], 
\end{eq}
and Assumption~\ref{assumption1}(ii) and \eqref{eq:defn-super-crit} are satisfied. 
The idea is to change the high-degree vertices of $\bld{d}^{\sss (2,n)}$ by those of $\bld{d}^{\sss (1,n)}$. To this end,
let
\[
i_{\sss (1,n)}:= \max\big\{i\geq 1\colon d_i^{\sss (1,n)} \geq (\frac{n}{\log n})^{\alpha}\big\}
\ \ \text{ and }\ \ 
i_{\sss (2,n)}:= \max\big\{i\geq 1\colon d_i^{\sss (2,n)} \geq (\frac{n}{\log n})^{\alpha}\big\}\, .
\]
For two finite sequences $(x_i)$ and $(y_j)$, we write $\texttt{Sort-Merge}((x_i),(y_j))$ as the sequence obtained by concatenating $(x_i)$ and $(y_j)$ and then sorting the sequence in a nonincreasing order.
We define
\begin{eq}\label{defn:merged-degree}
	\bld{d}^{\sss (n)}
	=
	(d_i^{\sss (n)})
	:=
	\texttt{Sort-Merge} \Big((d_i^{\sss (1,n)})_{i=1}^{i_{\sss (1,n)}}, (d_i^{\sss (2,n)})_{i=i_{\sss (2,n)}+1}^{n}\Big).
\end{eq}
Note that $i_{\sss (1,n)} \to \infty$. 
Also,
\begin{eq}
	\infty
	>
	\sum_{i=1}^\infty \theta_i^3 
	\geq 
	\sum_{i=1}^{i_{\sss (1,n)}} \theta_i^3 
	\geq 
	i_{\sss (1,n)} \theta_{i_{\sss (1,n)}}^3 
	\geq 
	i_{\sss (1,n)} \Big(\frac{1}{2 \log n}\Big)^{3\alpha},
\end{eq}
and therefore $i_{\sss (1,n)} \leq C (\log n)^{3\alpha}$. 
Further, it follows from \eqref{eq:d-2n} that $i_{\sss (2,n)} \leq K_2^{1/\alpha} (\log n)$. 
Therefore, the degree sequence in \eqref{defn:merged-degree} has length $n(1+o(1))$.

Since $i_{\sss (1,n)} \to \infty$, Assumption~\ref{assumption1}~(i) is satisfied by $(\bld{d}^{\sss (n)})_{n\geq 1}$. 
Also, for each fixed $K\geq 1$,  
\begin{eq}
	n^{-3\alpha}\sum_{i>K} (d_{i}^{\sss (n)})^3 
	\leq 
	\sum_{i>K} 8\theta_i^3+n^{-3\alpha}\sum_{i>K} (d_{i}^{\sss (2,n)})^3, 
\end{eq}
and thus \eqref{eqn:670} holds. 
Next, it can be easily checked that the remaining conditions in Assumption~\ref{assumption1}(ii) and \eqref{eq:defn-super-crit} hold for $(\bld{d}^{(n)})_{n\geq 1}$ by making use of the fact that $(\bld{d}^{\sss (2,n)})_{n\geq 1}$ satisfies Assumption~\ref{assumption1}(ii) and \eqref{eq:defn-super-crit}.

Finally we have to verify that $(\bld{d}^{\sss (n)})_{n\geq 1}$ satisfies Assumption~\ref{assumption-extra}. 
It suffices to show that there exist $C>1$ and $C'>0$ such that for all $n\geq 1$,
\[
\sum_i d_i^{\sss (n)}\ind{l< d_i^{\sss (n)}\leq Cl}
\geq
C' n/ l^{\tau-2}
\ \ \text{ for }\ \
1\leq l< d_1^{\sss (n)}\, .
\]
This can be proved in a straightforward way by using \eqref{eq:compactness}, \eqref{eq:d-2n}, and the definition of $\bld{d}^{\sss (n)}$ given in \eqref{defn:merged-degree}.
We omit the details.

\bibliographystyle{plain}
\bibliography{GHP-proof}

\end{document}